\numberwithin{equation}{section} %\swapnumbers
\theoremstyle{plain}
\newtheorem{cor}[equation]{Corollary}
\newtheorem{lem}[equation]{Lemma}
\newtheorem{prop}[equation]{Proposition}
\newtheorem{thm}[equation]{Theorem}
\theoremstyle{definition}
\newtheorem{jsa}[equation]{Jump set algorithm}
\newtheorem{dfn}[equation]{Definition}
\newtheorem{exa}[equation]{Example}
\newtheorem{rem}[equation]{Remark}
\newcommand{\cref}[1]{Corollary~\ref{#1}}   %use: \cref{labelname}
\newcommand{\im}{\operatorname{Im}}
\newcommand{\Z}{\mathbb Z}  % the integrers
\newcommand{\R}{\mathbb R} % the reals
\newcommand{\nnR}{\mathbb R_{\geq 0}} % the non-negative reals
\newcommand{\inner}{\ensuremath{\langle \cdot , \cdot  \rangle }}
\renewcommand{\phi}{\varphi}
\renewcommand{\emptyset}{\varnothing}
\newcommand{\si}{\sigma}
\newcommand{\al}{\alpha}
\newcommand{\tM}{\widetilde{M}}
\newcommand{\tX}{\widetilde{X}}
\newcommand{\tg}{\tilde{g}}
\newcommand{\mM}{\mathcal{M}}
\newcommand{\mL}{\mathcal{L}}
\newcommand{\mF}{\mathcal{F}}
\newcommand{\CovSpec}{\operatorname{CovSpec}} % the covering spectrum
\newcommand{\Jump}{\operatorname{Jump}} %
\newcommand{\Hom}{\operatorname{Hom}} %
\newcommand{\Gl}{\operatorname{Gl}} %
\newcommand{\Fil}{\operatorname{Fil}} %
\newcommand{\fund}{\pi_1(M)} % fundamental group
\newcommand{\Image}{\operatorname{Im}}
\newcommand{\diam}{\operatorname{diam}}
\newcommand{\eps}{{\epsilon}}
\begin{document}

\newcommand{\spacing}[1]{\renewcommand{\baselinestretch}{#1}\large\normalsize}
\spacing{1.14}

\title{Sunada's method and the covering spectrum}

%%%%%%%%%%%%%% BART'S INFO  %%%%%%%%%%%
\author[B. de Smit]{Bart de Smit $^\ast$}
\address{Universiteit Leiden\\Mathematisch Instituut\\Postbus 9512 \\2300 RA
Leiden\\} \email{desmit@math.leidenuniv.nl}
\thanks{$^\ast$ Funded in part by the European Commission under contract
MRTN-CT-2006-035495}

%%%%%%%%%%%%%%  RUTH'S INFO  %%%%%%%%%%%
\author[R. Gornet]{Ruth Gornet $^\dagger$}
\address{University of Texas \\ Department of Mathematics \\ Arlington, TX
76019\\} \email{rgornet@uta.edu}
\thanks{$^\dagger$ Research partially supported by NSF grant DMS-0204648}

%%%%%%%%%%%%%% CRAIG'S INFO  %%%%%%%%%%%%%
\author[C. J. Sutton]{Craig J. Sutton$^\sharp$}
\address{Dartmouth College\\ Department of Mathematics \\ Hanover, NH 03755\\}
\email{craig.j.sutton@dartmouth.edu}
\thanks{$^\sharp$ Research partially supported by an NSF Postdoctoral Research
Fellowship, NSF grant DMS-0605247 and a Woodrow Wilson Foundation Career Enhancement Fellowship}

\subjclass{53C20, 58J50}
\keywords{Laplace spectrum, Heisenberg manifolds, length spectrum, marked
length spectrum, covering spectrum, Gassmann-Sunada triples, systole,
successive minima}

%%\date{\today}

%%%%%%%%%%%%%%%%%%%%%%%%%%%%%%%%%%%%%%%%%%%%%%%%%%
%%%%%%%%%%%%%%%%%%%%%%   ABSTRACT  %%%%%%%%%%%%%%%%%%%%%
%%%%%%%%%%%%%%%%%%%%%%%%%%%%%%%%%%%%%%%%%%%%%%%%%%

\begin{abstract}
In 2004, Sormani and Wei introduced the covering spectrum: a geometric
invariant that isolates part of the length spectrum of a Riemannian manifold.
In their paper they observed that certain Sunada isospectral manifolds share
the same covering spectrum, thus raising the question of whether the covering
spectrum is a spectral invariant.
In the present paper we describe a group theoretic condition under which 
Sunada's method gives manifolds with identical covering spectra. When the
group theoretic condition of our method is not met, we are able to construct
Sunada isospectral manifolds with distinct covering spectra in dimension 3
and higher. Hence, the covering spectrum is not a spectral invariant. The main
geometric ingredient of the proof has an interpretation as the
minimum-marked-length-spectrum analogue of Colin de Verdi\`{e}re's classical
result on constructing metrics where the first $k$ eigenvalues of the Laplace
spectrum have been prescribed.
\end{abstract}

\maketitle

%%%%%%%%%%%%%%%%%%%%%%%%%%%%%%%%%%%%%%%%%%%%%%%
%%%%%%%%%%%%%%%%%%%      INTRODUCTION   %%%%%%%%%%%%%%%%%
%%%%%%%%%%%%%%%%%%%%%%%%%%%%%%%%%%%%%%%%%%%%%%%

\section{Introduction}

Two widely studied geometric invariants of a closed connected Riemannian
manifold $(M,g)$ are the Laplace spectrum and the length spectrum. The
\emph{Laplace spectrum} (or \emph{spectrum}) is the non-decreasing sequence of
eigenvalues, considered with multiplicities, of the Laplace operator acting on
the space of smooth functions of $M$. While the spectrum is known to encode
some geometric information such as the dimension, volume and total scalar
curvature, it is by now well established that the spectrum does not uniquely
determine the geometry of a Riemannian manifold (e.g., \cite{Milnor},
\cite{Vigneras}, \cite{Gordon2}, \cite{Szabo}, \cite{Schueth}, \cite{Sut}). In
particular, in 1985 Sunada devised a method that allows one to construct an
abundance of isospectral manifolds by exploiting certain finite group actions
\cite{Sunada}, which were originally studied by Gassmann \cite{Gassmann}. The
\emph{length spectrum} is the collection of lengths of the smoothly closed
geodesics in
$(M,g)$, where the multiplicity of a length is counted according to the number
of free homotopy classes containing a geodesic of that length. If we ignore
multiplicities, the resulting set of non-negative numbers is known as the
\emph{weak} (or \emph{absolute}) \emph{length spectrum}. As with the Laplace
spectrum, it is known that the 
length spectrum does not uniquely
characterize the geometry of a manifold.

A classical pursuit in geometry, dynamics and mathematical physics is to
understand the mutual influences of the Laplace and  
length spectra of a
Riemannian manifold. By using the Poisson summation formula one can show that
any two flat tori are isospectral if and only if they share the same length
spectrum. The work of Colin de Verdi\`{e}re shows that for a generic Riemannian
manifold---i.e., a manifold equipped with a ``bumpy'' metric 
\cite{Abraham}---the weak length spectrum is determined by its Laplace spectrum
\cite{CdV}. Furthermore, Chazarain \cite{Chazarain} and Duistermaat and
Guillemin \cite[Cor. 1.2]{DuGu} demonstrated that for an arbitrary Riemannian manifold,
the \emph{weak} length spectrum contains the singular support of the trace of
its wave group (a spectrally determined tempered distribution). Hence, the
Laplace spectrum always determines a non-trivial subset of the weak length
spectrum of a Riemannian manifold. However, the interesting and important
question of whether the weak length spectrum is actually a spectral invariant
remains open.

In \cite{SW} Sormani and Wei introduced the covering spectrum: a geometric
invariant that is related to the length spectrum of a Riemannian manifold $(M,g)$, and that
``roughly measures the size of the one dimensional holes in the space''.
In \cite{SW} the covering spectrum is
computed by considering a certain family $\{\tM^{\delta} \}_{\delta >0}$ of
regular coverings of $M$, where $\tM^\delta$ covers $\tM^{\epsilon}$ for
$\epsilon > \delta$, and selecting the values of $\delta$ where the isomorphism
type of the cover changes. That is, we look for ``jumps'' in the ``step
function'' $\delta \mapsto \tM^{\delta}$. When viewed within this framework the
definition actually applies to all complete length spaces, and Sormani and Wei
demonstrated that the covering spectrum is well-behaved under Gromov-Hausdorff
convergence. 

In this paper we present a slightly different, yet compatible, definition of
the covering spectrum that is applicable to any metric space (see
Section~\ref{Sec:CovSpec}). 
In short, we form the covering spectrum of $M$ by assigning a non-negative real
number $r(N/M)$ to \emph{every} non-trivial covering $N$ of $M$. In the case
that $(M,g)$ is a compact
Riemannian manifold this number is half the length of the shortest closed geodesic
that has a lift to $N$ that is not a closed loop; see Corollary
\ref{cor:geolength} (cf. \cite[Lemma 4.9]{SW}).
For example, if $\widetilde M$ is the universal cover of $M$,
then $r(\widetilde M/M)$ is half of the systole of $M$.
The covering spectrum of $(M,g)$,
denoted $\CovSpec(M,g)$, is then defined to be the collection of all $r(N/M)$
as $N$ ranges over all non-trivial covers of $M$. 
Thus, $2\CovSpec(M,g)$ is the portion of the weak length spectrum
consisting of those lengths 
that are ``seen'' by some covering space
as the length of the shortest closed geodesic having a non-closed lift.

As an example, consider the flat $3\times 2$ torus $T^2 = S^1(3) \times
S^1(2)$, where $S^1(c)=\R/c\Z$ denotes the circle of circumference $c$. 
Then one can easily verify that $\CovSpec(T^2) = \{1, \frac{3}{2} \}$ \cite[Exa.
2.5]{SW}. In this case we see that the covering spectrum consists entirely of
half the successive minima of the corresponding lattice $3\Z\times 2\Z$.
However, as we show in Example~\ref{Exa:FlatTori} this is not the case for
all flat tori.

Having identified the covering spectrum as a geometrically
determined finite part of the weak length spectrum, one 
may wonder about the mutual influences
between the covering spectrum and Laplace spectrum. Along these lines, Sormani
and Wei found that so-called Komatsu pairs of Sunada isospectral manifolds
share the same covering spectrum \cite[Ex. 10.5]{SW}. However, as we will show
in Section~\ref{Sec:Isospectral}, their claim \cite[Example 10.3]{SW} that
certain pairs of isospectral Heisenberg manifolds due to Gordon have distinct
covering spectra is false \cite{SWerrata}, thus keeping alive the question of
whether the covering spectrum is a spectral invariant.

By engaging in both a geometric and a group theoretic analysis of the covering
spectrum and its relation to the Sunada condition, we provide a negative answer
to this question. Specifically, we show that in dimension $3$ and higher there
are Sunada isospectral
manifolds with distinct covering spectra. In dimension 4 these include certain
isospectral flat tori due to Conway and Sloane.

In closing, we briefly return to the relationship between the weak length
spectrum and the Laplace spectrum. As we noted above, the Laplace spectrum of a
manifold always determines a particular subset of its weak length spectrum, and
under certain genericity conditions it is known that the Laplace spectrum
completely determines the weak length spectrum. However, in general, this
relationship is not fully understood. In Corollary~8.8 of \cite{SW}, Sormani
and Wei make the interesting observation that a continuous family of manifolds
sharing the same \emph{discrete} weak length spectrum will necessarily share
the same covering spectrum. That is, the covering spectrum is an invariant of a
particular flavor of \emph{iso-length-spectral} deformation. In light of the
fact that we have shown that the covering spectrum is not a spectral invariant
it would appear to be of interest to explore the covering spectrum of certain
continuous families of \emph{isospectral} manifolds in the literature.

%%%%%%%%%%%%%%%%%%%%%%%%%%%%%%%%%%%%%%%%%
%%%%%%%%%%%%%%%%   Main results   %%%%%%%%%%%%%%%%%
%%%%%%%%%%%%%%%%%%%%%%%%%%%%%%%%%%%%%%%%%

\section{Main results and overview}\label{Sec:Results}
\noindent
In this section, we state our main results and give an overview of the
material covered in later sections. We first recall Sunada's construction.  

Let $G$ be a finite group, and let $H$
and $H'$ be subgroups. Suppose that $G$ acts on a closed connected manifold
$M$, and suppose that the action is free in the sense that every non-trivial
element of $G$ acts without fixed points.  We then consider the quotient
manifolds $H\backslash M$ and $H'\backslash M$, which both cover $G\backslash
M$, as indicated in Figure~\ref{fig:SunadaDiagram}, where
the labels at regular coverings are their deck transformation groups.

\medskip
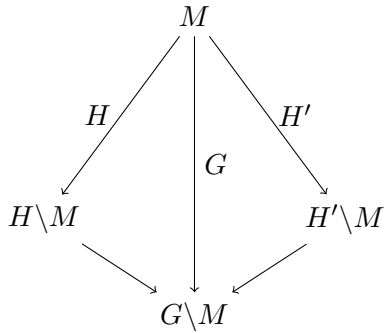
\begin{figure}
\begin{center}
\begin{tikzpicture}
\node (N) at (0,4) {$M$};
\node (M) at (-2,1.3) {$H\backslash M$};
\node (MM) at (2,1.3) {$H'\backslash M$};
\node (GM) at (0,0) {$G\backslash M$};
\draw [->] (N) -- node[left] {$H$} (M);
\draw [->] (N) -- node[right] {$H'$}(MM);
\draw [->] (N) -- node[right] {$G$}(GM);
\draw [->] (M) -- (GM);
\draw [->] (MM) -- (GM);
\end{tikzpicture}
\end{center}
\caption{Diagram of Covering Spaces}\label{fig:SunadaDiagram}
\end{figure}
\begin{thm}[Sunada \cite{Sunada}, Pesce \cite{Pesce4}]\label{thm:Sunada}
The following are equivalent:
\begin{enumerate}
\item for every conjugacy class $C$ of $G$ we have $\#(H\cap C) = \#(H'\cap
C)$;
\item for every $G$-invariant Riemannian metric on $M$, the quotient Riemannian
manifolds $H\backslash M$ and $H'\backslash M$ have the same Laplace spectrum.
\end{enumerate}
\end{thm}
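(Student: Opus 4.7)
The plan is to translate the group-theoretic condition (1) into an isomorphism of permutation representations of $G$, and then exploit the $G$-equivariant spectral decomposition of $L^2(M)$. First I would establish that (1) is equivalent to the isomorphism of $G$-representations $\C[G/H]\cong \C[G/H']$, which is the classical Gassmann equivalence. A direct character computation, based on counting fixed points of the left-translation action on $G/H$, yields
\[
\chi_{\C[G/H]}(g)\;=\;\frac{|G|}{|C_g|}\cdot\frac{|H\cap C_g|}{|H|},
\]
where $C_g$ denotes the conjugacy class of $g$. Evaluating at $g=e$ forces $|H|=|H'|$, after which the equality of these characters for all $g$ is exactly (1).

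Next I would decompose $L^2(M)$ as a $G$-module. Since the metric is $G$-invariant, the Laplacian $\Delta_M$ commutes with the $G$-action, so one may write $L^2(M)\cong\bigoplus_{\rho}V_\rho\otimes W_\rho$, where $\rho$ ranges over the irreducible $G$-representations and $W_\rho:=\Hom_G(V_\rho,L^2(M))$ inherits a self-adjoint $\Delta_M$-action. Passing to $H$-invariants then gives a $\Delta$-equivariant identification
\[
L^2(H\backslash M)\;=\;L^2(M)^H\;\cong\;\bigoplus_\rho V_\rho^H\otimes W_\rho,
\]
so the Laplace spectrum of $H\backslash M$ is completely determined by the spectra of $\Delta$ on each $W_\rho$ together with the multiplicities $\dim V_\rho^H$.

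With this setup, the implication (1)$\Rightarrow$(2) is nearly formal: by Frobenius reciprocity, $\dim V_\rho^H=\dim\Hom_G(\C[G/H],V_\rho)$, which equals $\dim V_\rho^{H'}$ once $\C[G/H]\cong \C[G/H']$. Consequently $L^2(H\backslash M)$ and $L^2(H'\backslash M)$ carry identical $\Delta$-spectra. The converse (2)$\Rightarrow$(1), due to Pesce, is the main obstacle: if (1) fails, then $\dim V_\rho^H\neq \dim V_\rho^{H'}$ for some irreducible $\rho$, and one must manufacture a particular $G$-invariant metric whose Laplace spectrum detects this discrepancy. The strategy is to perturb the metric so that eigenvalues lying in the $\rho$-isotypic block of $L^2(M)$ become separated from those in other blocks; the mismatch in multiplicities then becomes visible as distinct multiplicities in the Laplace spectra of the two quotients. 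This generic perturbation argument, which combines analytic dependence of eigenvalues on the metric with a density statement for the $G$-isotypic content of $L^2(M)$, is the technical heart of the converse, and I would invoke it as a black box since the remainder of the paper only uses the forward direction.
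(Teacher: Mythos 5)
The paper does not actually prove Theorem~\ref{thm:Sunada}: it cites Sunada for $(1)\Rightarrow(2)$ and Pesce for $(2)\Rightarrow(1)$, and explicitly states that only the forward implication is used in the sequel. So there is no ``paper proof'' to match; what matters is whether your argument for $(1)\Rightarrow(2)$ is sound, and it is. Your fixed-point count giving $\chi_{\C[G/H]}(g)=\frac{|G|}{|C_g|}\cdot\frac{|H\cap C_g|}{|H|}$ is correct, and the chain (1) $\Leftrightarrow$ $\C[G/H]\cong\C[G/H']$, then $L^2(M)\cong\bigoplus_\rho V_\rho\otimes W_\rho$ with $\Delta$ acting on the multiplicity spaces, then $L^2(H\backslash M)\cong L^2(M)^H$ and Frobenius reciprocity, is the standard representation-theoretic proof (essentially the one in Brooks's survey and in Pesce's framework; Sunada's original argument went through the heat trace, and B\'erard's transplantation gives yet another route). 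Two small points. First, the direction (1) $\Rightarrow$ ``characters agree'' needs $|H|=|H'|$ as an input, which you get not by evaluating at $g=e$ (that only helps in the converse direction) but by summing $\#(H\cap C)$ over all conjugacy classes $C$. Second, you could shortcut the isotypic decomposition entirely: for each eigenspace $E_\lambda\subset L^2(M)$ the multiplicity in $H\backslash M$ is $\dim E_\lambda^H=\frac{1}{|H|}\sum_{h\in H}\chi_{E_\lambda}(h)=\frac{1}{|H|}\sum_C\#(H\cap C)\,\chi_{E_\lambda}(C)$, which visibly depends only on the data in (1). Your treatment of $(2)\Rightarrow(1)$ as a black box attributed to Pesce is consistent with the paper, which does exactly the same; your sketch of the strategy is plausible but, as you say, not a proof, and nothing later in the paper depends on it.
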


The fact that $(1)$ implies $(2)$ is Sunada's theorem, and it is the only
implication that will be used in this paper (see \cite{Brooks},
\cite[Sec.  3]{GMW} for alternative proofs and comments).  Many of the examples
of isospectral manifolds in the literature can be understood within the
framework  of Sunada's theorem and its generalizations (e.g., \cite{DG},
\cite{Berard1}, \cite{Berard2}, \cite{Pesce}, \cite{Sut} and \cite{Ballmann}).
The group theoretic condition $(1)$  
is known by several names in
the literature: Perlis \cite{Perlis} says that $H$ and $H'$ are Gassmann
equivalent after Gassmann \cite{Gassmann}, who first used this condition in
1926, and spectral geometers (e.g., \cite{Brooks}) frequently say that $H$ and
$H'$ satisfy the Sunada condition. Others say that $H$ and $H'$ are almost
conjugate \cite{Buser, GK}, or linearly equivalent \cite{DL} or that $H$ and
$H'$ induce the same permutation representation \cite{GW}. In this paper we
will express that condition (1) holds by saying that $H$ and $H'$ are
\emph{Gassmann-Sunada} equivalent, or that
$(G,H,H')$ is a \emph{Gassmann-Sunada triple}. 

The statement that $(2)$ implies
$(1)$ in  
Theorem \ref{thm:Sunada}
follows from the work of Pesce \cite{Pesce4}, and it shows that the
Gassmann-Sunada condition
is not only sufficient, but also necessary if we want it to ensure that
$H\backslash M$ and $H'\backslash M$ are isospectral for all possible choices
of compatible Riemannian metrics in the diagram above.

In Section \ref{Sec:Proof} we establish the following analogue of Sunada's
method for the covering spectrum.

\begin{thm}\label{thm:CovSunada}
Let $G, H, H'$ and  $M$ be as above.
If $M$ is simply connected and of dimension at least $3$ then,
the following are equivalent:
\begin{enumerate}
\item[(3)]
for all subsets $S, T$ of $G$ that are stable under conjugation we have
$$
    \langle H\cap S\rangle = \langle H\cap T\rangle
    \iff
    \langle H'\cap S\rangle = \langle H'\cap T\rangle;
$$
\item[(4)] for every $G$-invariant Riemannian metric on $M$, the quotient
Riemannian manifolds $H\backslash M$ and $H'\backslash M$ have the same
covering spectrum.
\end{enumerate} More generally, if $M$ is simply connected,
then $(3)$ implies~$(4)$, and if $M$ has dimension at least $3$, then $(4)$
implies $(3)$.
\end{thm}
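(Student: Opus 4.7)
My plan is to encode the covering spectrum of each quotient as the set of jump values of a filtration on subgroups of $H$ (respectively, $H'$) given by subgroups generated from sublevel sets of the $G$-conjugation-invariant displacement function, and then to match this picture to condition $(3)$ in both directions. The hypothesis $\dim M \ge 3$ enters only when we need to \emph{realize} a prescribed pair of sublevel sets by some $G$-invariant metric.

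Because $M$ is simply connected and $G$ acts freely, $\pi_1(H\backslash M)\cong H$ and every connected cover of $H\backslash M$ is of the form $K\backslash M$ with $K\le H$. Using that a closed geodesic in $H\backslash M$ has a non-closed lift to $K\backslash M$ exactly when its free-homotopy class $[h]_H$ is not contained in $K$, one obtains
\[
r\bigl(K\backslash M/H\backslash M\bigr) \;=\; \tfrac{1}{2}\min\bigl\{\ell_g(h) : h\in H,\ [h]_H\not\subseteq K\bigr\},
\]
where $\ell_g(h):=\min_{x\in M} d_g(x, h\cdot x)$. Only the normal core of $K$ in $H$ matters, so one may restrict to normal covers. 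Since $g$ is $G$-invariant, $\ell_g$ is constant on $G$-conjugacy classes; writing $S_t := \{x\in G : \ell_g(x) < t\}$ and $S_t^{\leq} := \{x\in G : \ell_g(x)\le t\}$ (both $G$-stable), a short filtration argument yields
\[
\CovSpec(H\backslash M) \;=\; \bigl\{\,t/2 : t>0,\ \langle H\cap S_t\rangle \subsetneq \langle H\cap S_t^{\leq}\rangle\,\bigr\},
\]
and the analogous identity for $H'$. The direction $(3)\Rightarrow(4)$ is then immediate: for any $G$-invariant metric and any $t>0$, apply condition $(3)$ to the $G$-stable pair $(S_t, S_t^{\leq})$ to conclude that $t$ yields a jump on the $H$-side if and only if it does on the $H'$-side, so the covering spectra coincide.

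For $(4)\Rightarrow(3)$ I would argue the contrapositive. If $(3)$ fails, then after possibly interchanging $H$ and $H'$ and replacing one of $S, T$ by $S\cup T$, we may assume that $S\subsetneq T$ are $G$-stable with $\langle H\cap S\rangle = \langle H\cap T\rangle$ and $\langle H'\cap S\rangle \subsetneq \langle H'\cap T\rangle$; decomposing $T\setminus S$ into $G$-conjugacy classes and inserting them one at a time further reduces to the case where $T\setminus S$ is a single $G$-conjugacy class $C_\ast$. The goal becomes to construct a $G$-invariant metric $g$ on $M$ and a value $t_0 > 0$ with $\ell_g < t_0$ on $S$, $\ell_g = t_0$ on $C_\ast$, and $\ell_g > t_0$ on $G\setminus T$. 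For such $g$ one has $S_{t_0} = S$ and $S_{t_0}^{\leq} = T$, so $t_0$ produces a jump on the $H'$-side but not on the $H$-side, placing $t_0/2 \in \CovSpec(H'\backslash M)\setminus \CovSpec(H\backslash M)$ and contradicting $(4)$.

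The main obstacle is the construction of this metric; it is the geometric heart of the proof and is exactly where $\dim M\ge 3$ is needed. I expect to invoke the minimum-marked-length-spectrum analogue of Colin de Verdi\`ere's prescribed-spectrum theorem promised in the introduction: one selects a representative closed geodesic in each $G$-conjugacy class whose displacement must be pinned at or pushed past $t_0$, and then performs $G$-equivariant metric perturbations supported in disjoint tubular neighborhoods of these geodesics so as to drive each displacement to its prescribed value, starting from a small $G$-invariant background metric. Dimension at least three is needed to arrange that such tubular neighborhoods exist pairwise disjointly in a $G$-equivariant way and that the perturbations decouple across conjugacy classes, so that no unintended shortening occurs in any other class. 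Modulo this ``finding larger metrics'' lemma, the theorem reduces to the filtration/generated-subgroup bookkeeping above.
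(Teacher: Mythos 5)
Your overall strategy coincides with the paper's: encode $\CovSpec$ of each quotient as the jump set of the filtration by subgroups generated from conjugation-stable sublevel sets of the displacement/minimum-marked-length map (this is Proposition~\ref{prop:CovJump} together with Lemma~\ref{lem:restrict} and Lemma~\ref{lemma:jumptriple} in the paper), get $(3)\Rightarrow(4)$ by pure bookkeeping, and get $(4)\Rightarrow(3)$ contrapositively by realizing a separating pair $(S,T)$ through a metric with prescribed lengths, which is exactly the role of Theorem~\ref{thm:SystoleLengthMap}. Your reduction to $T\setminus S$ a single conjugacy class and your jump criterion $\langle H\cap S_t\rangle\subsetneq\langle H\cap S_t^{\leq}\rangle$ are both correct (the latter using discreteness of the image of $\ell_g$), and the paper likewise defers the metric construction to a separately proved prescription theorem, though it performs the construction on the quotient $G\backslash M$ and pulls the metric back to $M$, which is cleaner than building it $G$-equivariantly upstairs as you propose.

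There is one genuine gap: your entire framework opens with ``Because $M$ is simply connected\dots'' and identifies $\pi_1(H\backslash M)$ with $H$, so your argument for $(4)\Rightarrow(3)$ only covers the simply connected case. The theorem's final clause asserts $(4)\Rightarrow(3)$ for \emph{any} $M$ of dimension at least $3$. In general one only has a surjection $\phi\colon\pi_1(G\backslash M)\to G$ with kernel $q_\#(\pi_1(M))$, and the covering spectra are governed by the length map restricted to $\phi^{-1}(H)$, $\phi^{-1}(H')$, not to $H$, $H'$. The paper handles this by choosing a finite generating set $K$ of $\ker\phi$ and forcing those classes to be short (length $3$, alongside a lift $S'$ of $S$ at length $3$, a lift of $T\setminus S$ at length $4$, and everything else above $5$), so that $\Fil^{4}_{m_g}\phi^{-1}(H)=\phi^{-1}(\langle S\cap H\rangle)$ and the computation descends to $G$; without this extra step the non-simply-connected case is not addressed. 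Two smaller points you should make explicit: you must first close $S$ and $T$ under inverses (harmless for the generated subgroups, but necessary since $\ell_g(x)=\ell_g(x^{-1})$, and your single-conjugacy-class step $C_\ast$ may need to be $C_\ast\cup C_\ast^{-1}$), and you must verify that your prescribed values satisfy the power condition $m(x)\le|k|m(y)$ for $x$ conjugate to $y^k$ before invoking the realization theorem --- this is why the paper uses the specific values $3$, $4$, $>5$ with $2\cdot 3>5$ rather than a generic ``$<t_0$, $=t_0$, $>t_0$'' scheme.
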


In the last statement of Theorem~\ref{thm:CovSunada}, the condition that $M$ is
simply connected cannot be omitted. As will become evident after the discussion in 
Section~\ref{Sec:Proof}\label{Explanation}, this follows from the fact that 
if $N$ is a normal subgroup of $G$ that is contained in 
$H \cap H'$, then the triple $(G/N, H/N, H'/N)$ can satisfy condition $(3)$ while
the triple $(G, H, H')$ need not satisfy this condition 
(see Remark~\ref{rem:ReducedTriples} for an example).
We do not know if $(4)$ implies $(3)$ for
all $2$-dimensional manifolds.  In order to understand the relevance of
condition $(3)$ in this result, and to explain the ingredients of the proof, we
introduce the purely algebraic concept of a \emph{length map} on a group.

\begin{dfn}\label{dfn:lengthmap}
A \emph{length map} on a group $G$ is a map $m\colon\; G \to \nnR$
from $G$ to the set of non-negative real numbers that satisfies
\begin{enumerate}
\item[$(i)$] $m(g)>0$ for all $g\in G-\{1\}$;
\item[$(ii)$] $m(g)=m(hgh^{-1})$ for all $g,h\in G$;
\item[$(iii)$] $m(g^k)\le |k|m(g)$ for all $k\in \Z$ and $g \in G$.
\end{enumerate}
\end{dfn}

\noindent
Taking $k=0$ and $k=-1$ we see that $(iii)$ implies that $m(1)=0$ and
$m(g^{-1})=m(g)$ for all $g\in G$. 
The motivation for the preceding definition is found in the following example.

\begin{exa}[Minimum marked length map, {\cite[Def.~4.5]{SW}}]\label{exa:MinMark}
Let $(M,g)$ be a \emph{compact} Riemannian manifold, and let
$\pi_1(M)$ be its fundamental group with respect to some base point.
The \emph{minimum marked length map} $m_g\colon\; \pi_1(M) \to \nnR$ assigns to
each $\gamma \in \pi_1(M)$ the length of the shortest closed geodesic in the
free homotopy class determined by $\gamma$.  
Equivalently, we can set $m_g(\gamma) = \min_{x \in
\tM} d(x, \gamma \cdot x)$, where $(\tM, \tg)$ is the universal Riemannian
cover of $(M,g)$, $d$ is the metric structure induced by $\tg$ and (after a
choice of base point) the group $\fund$ acts on $\tM$ via deck transformations
(see \cite[Sec.  6]{Spanier}). 
We refer the reader to \cite[Lem. 4.4]{SW} for the reason why each free
homotopy class in the definition above has a shortest closed geodesic.  Since
the conjugacy classes of $\pi_1(M)$ naturally correspond to the free homotopy
classes of loops in $M$, we see that $m_g$ is a length map.  Since $M$ is
compact, the image of $m_g$ is closed and discrete in $\nnR$
\cite[Lem.~4.6]{SW}.
\end{exa}

\begin{dfn}\label{def:IntroJumpSet}
Given a length map $m$ on a group $G$, we define for each $\delta > 0$ the subgroup $\Fil_m^\delta G
=\langle g\in G: m(g)< \delta\rangle$ of $G$. When $\delta <\eps$, we have
$\Fil_m^\delta G \subset \Fil_m^\eps G$, so this defines an
increasing filtration, denoted $\Fil_m^\bullet G$, of $G$ by normal subgroups
indexed by the positive real numbers.  We define its \emph{jump set}
$\Jump(\Fil_m^\bullet G)$ to be the set of all $\delta > 0$ such that for all
$\eps>\delta$ we have $\Fil_m^\delta G\ne \Fil_m^\eps G$.  
\end{dfn}

We will see that
this notion of the jump set of the filtration defined by a length map provides
the connection between conditions $(3)$ and $(4)$ of Theorem
\ref{thm:CovSunada}.  More specifically, in Section \ref{Sec:CovFund} we will
show the following.

\begin{prop}\label{prop:CovJump}
The covering spectrum of a compact Riemannian manifold $(M,g)$ is given by
\[
\CovSpec(M,g)= \frac{1}{2} \Jump(\Fil^\bullet_{m_g}\pi_1(M)).
\]
\end{prop}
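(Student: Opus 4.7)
The plan is to translate both sides of the desired identity into statements about proper normal subgroups of $\pi_1(M)$ and then match them directly, using the conjugation-invariance of $m_g$ and the discreteness of its image.

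First I would recast $\CovSpec(M,g)$. By the Galois correspondence, every connected cover $N \to M$ is determined up to isomorphism by a subgroup $\pi_1(N) \subseteq \pi_1(M)$, well-defined up to conjugacy. A closed geodesic whose free homotopy class contains $\gamma$ has some lift to $N$ that fails to close iff $\gamma$ lies outside the \emph{core} of $\pi_1(N)$, i.e., the largest normal subgroup of $\pi_1(M)$ contained in $\pi_1(N)$. The cover $N$ and the regular cover associated to that core therefore give the same value of $r(N/M)$, so by \cref{cor:geolength} together with \eref{exa:MinMark} one obtains
\[
\CovSpec(M,g) = \bigl\{\tfrac12\min\{m_g(\gamma) : \gamma \notin K\} : K \triangleleft \pi_1(M),\ K \subsetneq \pi_1(M)\bigr\},
\]
each minimum being attained because the image of $m_g$ is closed and discrete in $\nnR$ by \cite[Lem.~4.6]{SW}.

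With this reformulation, each inclusion becomes a short direct argument. For $\CovSpec(M,g) \subseteq \tfrac12 \Jump(\Fil^\bullet_{m_g}\pi_1(M))$, take $\delta = r(N/M)$ with corresponding $K$ and set $L := 2\delta$; every generator of $\Fil^L_{m_g}\pi_1(M)$ has $m_g$-value strictly less than $L$ and hence lies in $K$, while by the choice of $K$ some $\gamma_0 \notin K$ satisfies $m_g(\gamma_0) = L$. This $\gamma_0$ is a generator of $\Fil^\epsilon_{m_g}\pi_1(M)$ but does not lie in $\Fil^L_{m_g}\pi_1(M)$ for any $\epsilon > L$, witnessing that $L$ is a jump. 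For the reverse inclusion, given a jump $L$ set $K := \Fil^L_{m_g}\pi_1(M)$, which is a proper normal subgroup by the jump condition and the conjugation-invariance of $m_g$. By construction $\min\{m_g(\gamma) : \gamma \notin K\} \geq L$; using discreteness to pick $\epsilon > L$ with the image of $m_g$ disjoint from $(L,\epsilon]$, the identification $\Fil^\epsilon_{m_g}\pi_1(M) = \langle\gamma : m_g(\gamma) \leq L\rangle$ combined with the strict inclusion $K \subsetneq \Fil^\epsilon_{m_g}\pi_1(M)$ forces the existence of some $\gamma_0$ with $m_g(\gamma_0) = L$ and $\gamma_0 \notin K$. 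Hence $L/2 = r(N/M)$ for the regular cover associated to $K$.

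The only genuinely nontrivial input is the description of which closed geodesics fail to have closed lifts to a non-regular cover, which is what lets us restrict the ranging cover to the regular one attached to the core. After that reduction, the proof is a bookkeeping argument about the filtration that rests entirely on the discreteness of the image of $m_g$. The main subtlety to track in the reverse direction is that one must describe $\Fil^\epsilon_{m_g}\pi_1(M)$ in terms of individual generators with $m_g(\gamma) \leq L$ rather than arbitrary products, so that the failure of an element to lie in $K$ can be localized to a single generator.
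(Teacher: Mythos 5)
Your proof is correct, but it is organized differently from the paper's. The paper's own proof is a two-line combination of \pref{prop:CovSpecJSet} (for any connected, locally path connected metric space, $\CovSpec(X)-\{0\}$ equals the jump set of the purely topological filtration $\Fil^\delta\pi_1(X,x_0)$ generated by loops freely homotopic into open $\delta$-balls) with \pref{prop:lengthmap} (which identifies that filtration with $\Fil^{2\delta}_{m_g}\pi_1(M)$ in the Riemannian case). You bypass the $\delta$-ball filtration entirely: you take \cref{cor:geolength} as the sole geometric input, reduce every cover to the regular cover attached to the core of its subgroup, rewrite $\CovSpec(M,g)$ as the set of numbers $\tfrac12\min\{m_g(\gamma):\gamma\notin K\}$ over proper normal subgroups $K$, and then match these against jumps by direct bookkeeping. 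The trade-off: the paper's route isolates a statement valid for general metric spaces and confines the Riemannian content to \pref{prop:lengthmap}, whereas yours is self-contained for compact Riemannian manifolds, makes explicit that only regular covers contribute, and pinpoints exactly where discreteness of the image of $m_g$ enters (attainment of the minima, and the existence of a single generator of $m_g$-value exactly $L$ outside $\Fil^L_{m_g}$ at a jump --- the subtlety you correctly flag). Note that the two arguments are not independent at bottom, since \cref{cor:geolength} is itself proved from \lref{lem:lpc} and \pref{prop:lengthmap}; also, your reformulation silently restricts to connected covers, which is harmless here because for connected $M$ a constant-degree cover is $\delta$-trivial iff each of its components is, and discreteness of the image of $m_g$ ensures the resulting infimum over components is attained by one of them.
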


In Lemma \ref{lemma:jumptriple} we will see that condition $(3)$ of
Theorem~\ref{thm:CovSunada} holds if and only if for every length map $m$ on
$G$ the restrictions to $H$ and to $H'$ have identical jump sets. When $(3)$
holds, we will say that $H$ and $H'$ are \emph{jump equivalent} or that
$(G,H,H')$ is a \emph{jump triple}. Combining Lemma \ref{lemma:jumptriple} with
Proposition \ref{prop:CovJump} it is not hard to show that $(3)$ implies $(4)$
for simply connected~$M$. In order to show the other, more difficult,
implication of Theorem~\ref{thm:CovSunada}, we will assume that $(3)$ does not
hold and then proceed to construct a Riemannian metric on $M$ so that the
resulting minimum marked length map has a special property.  In order to
construct this metric it will be necessary to understand the extent to which
length maps actually arise from minimum marked length maps of Riemannian
manifolds. That is, we would like to know which length maps on the fundamental
group of a given manifold $M$ arise as the minimum marked length map associated
to some Riemannian metric on $M$. 

In Section~\ref{Sec:Prescribing} we take up this line of inquiry and we establish
Theorem~\ref{thm:SystoleLengthMap}, which can be summarized as follows.

\begin{thm}
\label{thm:IntroSystole}
Let $M$ be a closed manifold of dimension at least three and let $S \subset
\fund$ be a finite set.  If $m: S \to \nnR$ is the restriction to $S$ of a
length map on $\fund$, then there exists a Riemannian metric $g$ on $M$ such
that $m_g(s) = m(s)$ for any $s \in S$. That is, $m$ can be extended to the
minimum marked length map associated to some metric $g$ on $M$.
\end{thm}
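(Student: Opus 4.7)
The plan is to build $g$ by placing, for each primitive root of an element of $S$, an embedded simple closed curve in $M$ with a tubular neighborhood carrying a warped metric of prescribed length, and then enlarging the metric outside these neighborhoods by a large conformal factor to kill off unwanted short loops.

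\emph{Setup.} Using conjugation invariance of length maps, I would first replace $S$ by a set of conjugacy-class representatives in $\pi_1(M)$. For each $s \in S$, write $s = t^k$ where $t \in \pi_1(M)$ is primitive (not a proper power), and let $T$ be the resulting finite set of primitive roots; the length map on $\pi_1(M)$ furnishes values $m(t)$ for $t \in T$ as well. Using $\dim M \ge 3$ together with a general-position argument, I would then realize each $t \in T$ by a smoothly embedded simple closed curve $c_t \subset M$, with the $c_t$'s pairwise disjoint.

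\emph{Local construction and global enlargement.} Next, I would choose lengths $L_t$ compatible with the length-map constraints imposed by the elements of $S$ that are powers of $t$, and install on a tubular neighborhood $U_t$ of $c_t$ a metric in which $c_t$ is a closed geodesic of length $L_t$, arranged so that the shortest loop in $U_t$ representing the class $[t^k]$ is the $k$-fold cover of $c_t$, of length $kL_t$. Outside $\bigcup_{t \in T} U_t$ I would rescale the metric by a large factor $\Lambda$, so that any loop leaving the tubular neighborhoods acquires length vastly exceeding $\max_{s \in S} m(s)$. With this setup, for each $s \in S$ the minimum-marked length $m_g(s)$ is witnessed by a loop confined to a single $U_t$.

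\emph{Main obstacle.} The hard part arises when several elements $t^{k_1}, \ldots, t^{k_r} \in S$ share a single primitive root $t$ while the ratios $m(t^{k_i})/k_i$ are not all equal, so no single choice of $L_t$ can simultaneously realize $m_g(t^{k_i}) = m(t^{k_i})$ by the $k_i$-fold cover of $c_t$ for every $i$. In that case one must produce, for the classes $[t^{k_i}]$ with $m(t^{k_i}) < k_i L_t$, additional ``shortcut'' loops of the prescribed length via further local modifications --- for example, auxiliary parallel closed geodesics, twisted or pinched tubular regions, or short extra loops joined by controlled exterior paths --- designed so as not to inadvertently shorten any other class appearing in $S$. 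Verifying the lower bound $m_g(s) \ge m(s)$ then combines the local minimality inside each modified region with the large-exterior scaling to rule out any loop in class $[s]$ shorter than $m(s)$; this verification, which must be checked uniformly for all finitely many $s \in S$ simultaneously, is the technical heart of the argument.
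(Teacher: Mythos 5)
There is a genuine gap, and it sits exactly where you locate it: the ``main obstacle'' paragraph. By collapsing the elements of $S$ onto a set $T$ of primitive roots and insisting that each $s=t^{k}$ be realized as the $k$-fold iterate of a single curve $c_t$, you manufacture a constraint (all ratios $m(t^{k_i})/k_i$ equal) that the hypotheses do not supply, and your proposed escape---``auxiliary parallel closed geodesics, twisted or pinched tubular regions, or short extra loops joined by controlled exterior paths''---is not a construction but a wish list. In particular it is unclear how a ``shortcut'' loop in the class of $t^{k_i}$ could be inserted without creating new short loops in the classes of other powers of $t$, and no verification is offered. (A secondary issue: primitive roots need not exist or be unique in an arbitrary finitely presented group, so even the reduction step is not well defined in general.)

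The paper's proof of Theorem~\ref{thm:SystoleLengthMap} dissolves this obstacle rather than solving it: it makes no reduction to primitive roots and instead chooses, for \emph{every} $s\in S$, its own embedded circle $\si_s$, with all images pairwise disjoint (this is where $\dim M\ge 3$ is used), each carrying a tubular-neighborhood metric $m(s)\cdot h_0$ in which $\si_s$ has length exactly $m(s)$ and every loop in the tube homotopic to $\si_s^{k}$ has length at least $|k|\,m(s)$. If $s$ and $t$ both lie in $S$ with $s$ conjugate to $t^{k}$, the $k$-fold iterate of $\si_t$ is a competitor in the class of $s$, but the length-map axiom $m(s)\le |k|\,m(t)$ is precisely the statement that this competitor is not shorter than the curve $\si_s$ already installed for $s$; so no additional local surgery is needed. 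The rest of the argument (a buffer annulus scaled by a large $\kappa$, a background metric $g_0$ of systole at least $B$, and a three-case analysis according to whether a minimizing loop stays in a tube, stays outside all tubes, or crosses a buffer) matches the ``global enlargement'' part of your outline. If you drop the primitive-root reduction and embed one disjoint curve per element of $S$, your argument becomes the paper's.
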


\noindent
Hence, in the case where $M$ is a closed manifold with finite fundamental group
and dimension at least $3$, the above demonstrates that there is no difference
between length maps on $\pi_1(M)$ and the minimum marked length maps associated
to $M$.

In Theorem~\ref{thm:SystoleLengthMap}---the detailed statement of the above---we 
actually prove that we have some control over the extension of $m$.
More specifically, we demonstrate that we are able to prescribe 
which free homotopy classes contain the shortest 
closed geodesics (see Remark~\ref{rem:SystoleLengthMap}).
In Section~\ref{Sec:Proof}, we will 
use this to prove
Theorem~\ref{thm:CovSunada}.

Theorem~\ref{thm:SystoleLengthMap} also provides a way to characterize 
the initial segments of the so-called minimum marked length spectrum
of a Riemannian metric on a closed manifold of dimension at least three.
Since this is perhaps
of independent interest we will formulate this after the following definition.

\begin{dfn}[cf.~\cite{GornetMast}]\label{def:MinLSpec}
Let $(M,g)$ be a Riemannian manifold with associated minimum marked length map
$m_g: \fund \to \nnR$.  The value $m_g([\si])$ depends only on the class of the
loop $\si$ in the set of unoriented free homotopy classes $\mF(M)$ of loops in
$M$; whereby the \emph{unoriented free homotopy class} of $\sigma$ we mean the
collection of all loops freely homotopic to $\sigma$ or its inverse
$\bar{\sigma}$.  Hence, we obtain an induced map $\mF(M)\to \nnR$, which is
again denoted by $m_g$, and in this incarnation it is known as the
\emph{minimum marked length spectrum}.  Alternatively, one can define the
minimum marked length spectrum as the set of ordered pairs $(m_g(c),c)$ where
each length $m_g(c)$ is ``marked'' by the unoriented free homotopy class $c\in
\mF(M)$. This set of pairs is a subset of the marked length spectrum as defined
in \cite{Gornet}, for example.
\end{dfn}

\newcommand\Fvar{{C}}
\newcommand\fvar{{c}}
\newcommand\lvar{{l}}
\begin{thm}\label{thm:varyg}
Suppose that $M$ is a closed connected manifold of dimension at least three.
Let $\Fvar = ( \fvar_1, \fvar_2, \ldots, \fvar_k )$ be 
a sequence of
distinct elements of $\mF(M)$ where the first element $\fvar_1$ is trivial.
Then for every sequence $0 = \lvar_1 < \lvar_2 \leq \cdots \leq \lvar_k$
of real numbers the following are equivalent:
\begin{enumerate}
\item[(5)] the sequence $\lvar_1,\ldots, l_k$ is $\Fvar$-admissible 
(see Definition~\ref{dfn:Admissible} and Example~\ref{exa:Admissible});
\item[(6)] there is a Riemannian metric $g$ on $M$ such that the minimum marked
length map $m_g$ 
satisfies $m_g(\fvar_i) = \lvar_i$ for all $i$ and
$m_g(\fvar) \geq \lvar_k$ for all $c\in \mF(M)-\{\fvar_1, \ldots , \fvar_k \}$.
\end{enumerate}
In particular, there is a metric $g$ on $M$ such that the systole is achieved
in the unoriented free homotopy class $\fvar_2$. 
\end{thm}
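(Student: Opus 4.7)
The plan is to deduce Theorem~\ref{thm:varyg} from Theorem~\ref{thm:SystoleLengthMap}, the detailed form of Theorem~\ref{thm:IntroSystole}, which reduces the geometric problem of constructing Riemannian metrics with prescribed initial minimum-marked-length data to the purely algebraic problem of constructing compatible length maps on $\fund$. The direction $(6) \Rightarrow (5)$ is the easier one: given a metric $g$ satisfying (6), the minimum marked length map $m_g \colon \fund \to \nnR$ is a length map in the sense of Definition~\ref{dfn:lengthmap} (Example~\ref{exa:MinMark}). It descends to a function on $\mF(M)$ taking the values $\lvar_i$ on $\fvar_i$ and bounded below by $\lvar_k$ on every other unoriented free homotopy class. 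The notion of $\Fvar$-admissibility (Definition~\ref{dfn:Admissible}) is by design the combinatorial distillation of axioms $(i)$--$(iii)$ of Definition~\ref{dfn:lengthmap} together with this lower bound on the unlisted classes, so (5) must hold.

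For $(5) \Rightarrow (6)$, the plan proceeds in two steps. First, I would use $\Fvar$-admissibility to construct a length map $m$ on all of $\fund$ with $m(\fvar_i) = \lvar_i$ for $1 \le i \le k$ and $m(c) \ge \lvar_k$ for every $c \in \mF(M) \setminus \{\fvar_1, \ldots, \fvar_k\}$. A natural candidate is to assign $\lvar_i$ to every element whose unoriented conjugacy class is $\fvar_i$, $\lvar_k$ to every other nontrivial element, and $0$ to the identity; admissibility is precisely the hypothesis ensuring that this (or a suitable refinement) is compatible with the power inequality $m(g^n) \le |n| m(g)$, since power relations among chosen representatives $\gamma_i$ of $\fvar_i$ translate into inequalities among the $\lvar_i$. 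Second, applying Theorem~\ref{thm:SystoleLengthMap} to a finite set $S \subset \fund$ containing the $\gamma_i$ (and their inverses) together with $m|_S$ yields a Riemannian metric $g$ on $M$ with $m_g|_S = m|_S$. To ensure that $m_g(c) \ge \lvar_k$ holds for all $c \notin \{\fvar_1, \ldots, \fvar_k\}$---and not merely on the finite set $S$---one invokes the strengthened form of Theorem~\ref{thm:SystoleLengthMap} recorded in Remark~\ref{rem:SystoleLengthMap}, which permits one to dictate which free homotopy classes contain the shortest closed geodesics.

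The main obstacle is exactly this last point: the geometric construction underlying Theorem~\ref{thm:SystoleLengthMap} could in principle create unintentionally short loops in free homotopy classes outside $\{\fvar_1, \ldots, \fvar_k\}$, and one must arrange that this does not happen. This is precisely the control provided by the strengthened form of that theorem, and the dimension hypothesis $\dim M \ge 3$ enters only at this geometric step, to give room for the requisite tubular-neighborhood constructions. The final ``in particular'' statement then follows by choosing $k = 2$, $\fvar_2$ an arbitrary nontrivial unoriented free homotopy class, and $\lvar_2 > 0$ so small that the pair $(0, \lvar_2)$ is trivially $\Fvar$-admissible; the equivalence produces a metric in which the systole is achieved in~$\fvar_2$.
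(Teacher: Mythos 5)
Your proposal is correct and follows essentially the same route as the paper: both directions reduce to Theorem~\ref{thm:SystoleLengthMap}, with admissibility condition $(1)$ supplying its hypotheses on the representatives $x_i\in S$ and condition $(2)$, combined with the choice $B\ge \lvar_k$ in its strengthened conclusion, controlling the classes outside $\{\fvar_1,\ldots,\fvar_k\}$. The only (harmless) deviation is your intermediate construction of a length map on all of $\fund$; the paper skips this and defines $m$ only on the finite set $S$, which is all that Theorem~\ref{thm:SystoleLengthMap} requires.
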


The statement that $(5)$ implies $(6)$ in the above is  
a
minimum-marked-length-spectrum analog of a classical
theorem due to Colin de Verdi\`{e}re \cite{CdV2}, which states that given a
connected closed manifold $M$ of dimension at least $3$ and a finite sequence
$a_1  = 0 < a_2\leq a_3 \leq \cdots \leq a_k$ there is a Riemannian metric $g$
on $M$ such that the sequence gives exactly the first $k$ eigenvalues, counting
multiplicities, of the associated Laplacian. 
However, unlike Colin de Verdi\`{e}re's result, the sequence 
$0= \lvar_1 < \lvar_2 \leq \lvar_3 \cdots \leq \lvar_k$ in the above cannot be
chosen arbitrarily:  
it will depend on our choice of the sequence
$\Fvar = (\fvar_1, \fvar_2, \ldots , \fvar_k)$ in $\mF(M)$.
The above theorem then tells us that given such a choice for $\Fvar$, 
the $\Fvar$-admissibility of
$0= \lvar_1 < \lvar_2 \leq \lvar_3 \cdots \leq \lvar_k$ is a necessary and
sufficient condition for the existence of a metric $g$ such that the $i$th
smallest value of the minimum marked length spectrum is $m_g(\fvar_i)=l_i$ for
$i=1, \ldots, k$.

With Theorem~\ref{thm:SystoleLengthMap} in place, we will complete the proof of
Theorem \ref{thm:CovSunada} in Section~\ref{Sec:Proof}.  In Section
\ref{Sec:Groups} we present some group theoretic context and results concerning
Gassmann-Sunada triples (condition $(1)$ of Theorem~\ref{thm:Sunada}) and jump
triples (condition $(3)$ of Theorem~\ref{thm:CovSunada}).
There are some key differences in the behavior of jump triples and
Gassmann-Sunada triples. For instance, we have $[G:H]=[G:H']$ for every
Gassmann-Sunada triple $(G,H,H')$, but not for every jump triple. The behavior
with respect to dividing out normal subgroups is also different: if $N$ is a
normal subgroup of $G$ that is contained in $H\cap H'$, then
$H$ and $H'$ are Gassmann-Sunada equivalent in $G$
if and only if $H/N$ and $H'/N$ are Gassmann-Sunada equivalent in $G/N$.
However, as we noted earlier, $H/N$ and $H'/N$ can be jump equivalent in $G/N$ 
even when $H$ and $H'$ are not jump equivalent in $G$ (see Remark~\ref{rem:ReducedTriples}).

While most small examples of Gassmann-Sunada triples turn out to be jump
triples (see Example \ref{exa:small}) we will show how to adapt some well-known
constructions of Gassmann-Sunada triples to obtain Gassmann-Sunada triples that
are not jump triples.

Finally, our analysis leads us to the following conclusion in Section \ref{Sec:Isospectral}.

\begin{thm}\label{thm:distinct}
For each $n\ge 3$ there are closed Riemannian manifolds of dimension $n$ with
identical Laplace spectra and distinct covering spectra.
\end{thm}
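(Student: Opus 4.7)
The plan is to feed a Gassmann--Sunada triple that fails the jump-triple condition into the machinery developed above. By the results of Section~\ref{Sec:Groups}, I would fix a Gassmann--Sunada triple $(G,H,H')$ for which condition~$(3)$ of Theorem~\ref{thm:CovSunada} does \emph{not} hold. The strategy is then to realize $G$ as the deck group of a free action on a closed simply connected $n$-manifold $M$, and to combine Sunada's isospectrality theorem with the contrapositive of the implication $(4)\Rightarrow(3)$ in Theorem~\ref{thm:CovSunada}.

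To realize the action geometrically, I would choose a unitary representation $\rho\colon G\hookrightarrow U(k)$ no non-trivial element of which fixes a nonzero vector (for instance, the reduced regular representation, taken with enough multiplicity), so that $G$ acts freely and smoothly on $S^{2k-1}$. To adjust the dimension to $n$, I would set $M = S^{2k-1}\times Y$, where $Y$ is a closed simply connected manifold of dimension $n-(2k-1)$ carrying the trivial $G$-action; taking $Y$ to be a sphere or a product of two spheres handles every $n \ge 2k+1$ uniformly. The space $M$ is then a closed simply connected $n$-manifold with a smooth free $G$-action.

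Given such $M$, I would equip it with a $G$-invariant Riemannian metric. Theorem~\ref{thm:Sunada} guarantees that the quotient manifolds $H\backslash M$ and $H'\backslash M$ are isospectral for \emph{every} choice of $G$-invariant metric. On the other hand, since the triple fails condition~$(3)$ while $M$ is simply connected of dimension at least $3$, Theorem~\ref{thm:CovSunada} forces condition~$(4)$ to fail as well: there must exist a $G$-invariant metric on $M$ for which the induced metrics on $H\backslash M$ and $H'\backslash M$ yield distinct covering spectra. The quotients equipped with this metric are then the pair demanded by the theorem.

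The main obstacle is ensuring the dimension-realization step works uniformly in $n \ge 3$. The product construction readily produces free $G$-actions on simply connected closed manifolds of every dimension above a threshold depending on $G$, so the high-dimensional cases follow from any single non-jump Gassmann--Sunada triple. The low-dimensional cases require separate input: for $n=4$ the paper supplies the explicit isospectral flat tori of Conway and Sloane, whose covering spectra are computed via Proposition~\ref{prop:CovJump}; for $n=3$ one must either arrange that the chosen triple has $G$ acting freely on $S^3$ (placing $G$ among the spherical space-form groups) or supply a direct 3-dimensional construction. Once these low-dimensional cases are in hand, the concatenation of Theorems~\ref{thm:Sunada} and~\ref{thm:CovSunada} delivers the required pair in every dimension.
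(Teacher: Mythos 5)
Your overall strategy---feed a Gassmann--Sunada triple that is not a jump triple into Sunada's theorem together with the contrapositive of the implication $(4)\Rightarrow(3)$ of Theorem~\ref{thm:CovSunada}---is exactly the paper's strategy. But your geometric realization step has a genuine gap. First, the parenthetical claim that the reduced regular representation (with multiplicity) is fixed-point-free is false for any non-cyclic $G$: if $g\ne 1$ has order $d<|G|$, then left multiplication by $g$ on $G$ is a product of $|G|/d$ cycles of length $d$, so the eigenvalue $1$ occurs with multiplicity $|G|/d\ge 2$ in the regular representation and with multiplicity $|G|/d-1\ge 1$ in the reduced one; taking multiples does not help. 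Second, and more fundamentally, no choice of representation can work for the triples actually available: a finite group acting freely on a sphere must have periodic cohomology (in particular it cannot contain $\Z/p\times\Z/p$), and the groups $G$ in every non-jump Gassmann--Sunada triple exhibited in Section~\ref{Sec:Groups} ($S_{16}$, $V\rtimes\Gl(V)$, $M_{23}$) contain non-cyclic elementary abelian subgroups. Since your $G$-action on $S^{2k-1}\times Y$ is trivial on $Y$, freeness of the product action is equivalent to freeness on the sphere, so the construction cannot be carried out.

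The source of the difficulty is that you are imposing simple connectivity of $M$, which is only needed for the direction $(3)\Rightarrow(4)$ of Theorem~\ref{thm:CovSunada}; the direction you actually use, $(4)\Rightarrow(3)$, requires only $\dim M\ge 3$. Dropping simple connectivity, the paper realizes the free $G$-action as follows: take a closed orientable surface $N_0$ of genus at least the number of generators of $G$, a surjection $\phi\colon\pi_1(N_0)\to G$, and let $N$ be the regular cover corresponding to $\ker\phi$, so that $G$ acts freely on $N$ by deck transformations; then set $M=N\times Y$ with $Y$ any closed manifold of dimension $n-2$ (a circle for $n=3$). This handles every $n\ge 3$ uniformly, so no separate low-dimensional arguments are needed; the Conway--Sloane tori appear in the paper only as an additional explicit $4$-dimensional example, not as an ingredient of the proof.
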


\noindent
As we will see in Example~\ref{exa:tori}, these will include certain
isospectral flat tori of dimension $4$ due to Conway and Sloane \cite{CS}.
In a forthcoming paper we employ different techniques to 
construct isospectral surfaces with distinct covering spectra \cite{DGS}.

We conclude the paper with a study of the covering spectrum of
Heisenberg manifolds that refutes 
\cite[Example 10.3]{SW}.

%%%%%%%%%%%%%%%%%%%%%%%%%%%%%%%%%%%%
%%%%%%  The covering spectrum of a metric space   %%%%%%
%%%%%%%%%%%%%%%%%%%%%%%%%%%%%%%%%%%%

\section{The covering spectrum of a metric space}\label{Sec:CovSpec}

The notion of the covering spectrum was defined by Sormani-Wei \cite{SW} for
complete length spaces.  In this section we provide an alternate definition
that works for any metric space.  For compact length spaces---the main focus of
this paper---our definition coincides with the definition given by Sormani and
Wei, and for non-compact complete length spaces the only difference is that $0$
is sometimes an element of our covering spectrum.  In order to present this
definition, we first review some terminology from covering space theory.

Let $X$ be a topological space. By a \emph{space over} $X$ we mean a
topological space $Y$ along with a continuous map $p\colon\; Y\to X$.  For such
a space $Y$ over $X$ and $Z\subset X$ we say that $Y$ is \emph{trivial over}
$Z$, or it \emph{evenly covers} $Z$, if $p^{-1}(Z)$ is a disjoint union of
open subspaces of $p^{-1}(Z)$ that are each mapped homeomorphically onto $Z$
via the map~$p$.  A \emph{covering space} of $X$ is a space $Y$ over $X$ that
is locally trivial; that is, each point $x\in X$ has an open neighborhood $U$
so that $Y$ is trivial over~$U$. We say that a covering space $Y$ of
$X$ is \emph{trivial} if it is trivial over $X$.

\begin{dfn}\label{dfn:covrad}
Let $(X,d)$ be a metric space and let $p\colon\; Y \to X$ be a covering space
of~$X$.  For $\delta\in \R_{\ge 0} \cup \{\infty\}$ we say that $Y$ is
\emph{$\delta$-trivial} over $X$ if for each $x\in X$ the cover $Y$ is trivial
over the open ball of radius $\delta$ centered at~$x$.  The \emph{covering
radius} $r(Y/X) \in \R_{\ge 0} \cup \{\infty\}$ of $Y$ over $X$ is given by
$$
r(Y/X)=\sup\;\{\delta \colon\; Y \textrm{ is }\delta\textrm{-trivial over }X\}.
$$
\end{dfn}

Clearly, we have $r(Y/X)=\infty$ for any trivial cover $Y\to X$,
and $r(Y/X) \le \diam(X)$ for any non-trivial cover $Y\to X$.

\begin{rem} 
If $X$ is a compact metric space, then the Lebesgue
number lemma tells us that the covering radius of any cover will be
non-zero.  However, in the next example we see that non-compact metric spaces
can admit covers that have zero covering radius.
\end{rem}

\begin{exa}[Zero covering radius]\label{exa:ZeroRadius}
Let $X$
be the metric \emph{subspace} of the Euclidean plane equipped with the standard
metric that one gets by
connecting a sequence of circles with radius going to zero as in Figure~\ref{fig:ZeroCovRad}.
Then for any non-trivial covering space $Y$ of $X$ the covering radius $r(Y/X)$
is the infimum of the radii of those circles that are not evenly covered.
This space $X$ has a simply connected universal cover, which therefore
has covering radius zero over~$X$.

\begin{figure}%[h]
\begin{center}
\begin{tikzpicture}[scale=1.4]
\draw (0,0) grid (4.5,0.999);
\draw[fill=white] (0,1) circle (0.5cm);
\draw[fill=white] (1,1) circle (0.3cm);
\draw[fill=white] (2,1) circle (0.18cm);
\draw[fill=white] (3,1) circle (0.108cm);
\draw[fill=white] (4,1) circle (0.0648cm);
\draw[fill=black] (4.7,0.2) circle (0.02cm);
\draw[fill=black] (4.9,0.2) circle (0.02cm);
\draw[fill=black] (5.1,0.2) circle (0.02cm);
\end{tikzpicture}
\end{center}
\caption{A metric space with zero covering radius}\label{fig:ZeroCovRad}
\end{figure}
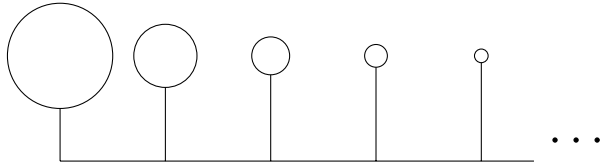
\end{exa}

\begin{rem}
In Lemma \ref{lem:lpc} below we show that if $(X,d)$ is a locally path
connected space, then $p: Y \to X$ is actually an $r(Y/X)$-trivial cover.
In general this may fail; see Example~\ref{exa:InfRad} below. In
particular, we can have $r(Y/X) = \diam(X)$ without $Y \to X$ being
$\diam(X)$-trivial.
\end{rem}

\begin{exa}[Infinite covering radius]\label{exa:InfRad}
Let us define a metric subspace $X$ of the Euclidean plane that is
locally path connected at all but a single point, together with a non-trivial
double cover that is trivial over every bounded subset of~$X$.  
As is shown in Figure~\ref{fig:InfiniteCovRad}, we take a union
of infinitely many circles, all tangent to each other at the same point $x$,
with the radius of the circles going to infinity, where in each circle we omit
an open interval lying around the point $x$ whose size is shrinking to
zero as we go through the infinite sequence of circles.  Thus, what remains of
each circle is a closed interval, and the end points of these intervals give a
sequence converging to~$x$. Then we also add the single point $x$ to our space.
Informally one might say that $X$ has a loop that only closes on an infinite
scale.
Now take any point $z$ inside the circles that is not in~$X$.
Then the plane with $z$ removed has a connected double cover,
and we restrict this cover to~$X$. We leave it to the reader to check that this
covering has the stated properties.

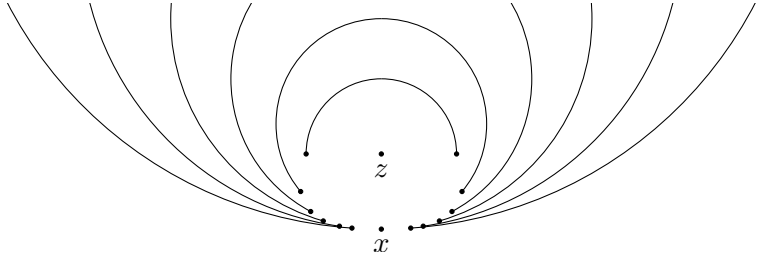
\begin{figure}[h]
\begin{center}
\begin{tikzpicture}
\clip (-6.1,-0.3) rectangle (6.1,3);

\draw (0,2cm) arc (90:0:1cm);
\draw (0,2cm) arc (90:180:1cm);
\draw (0,2.8cm) arc (90:-40:1.4cm);
\draw (0,2.8cm) arc (90:220:1.4cm);
\draw (0,4cm) arc (90:-62:2cm);
\draw (0,4cm) arc (90:242:2cm);
\draw (0,5.6cm) arc (90:-74:2.8cm);
\draw (0,5.6cm) arc (90:254:2.8cm);
\draw (0,8cm) arc (90:-82:4cm);
\draw (0,8cm) arc (90:262:4cm);
\draw (0,11.2cm) arc (90:-86:5.6cm);
\draw (0,11.2cm) arc (90:266:5.6cm);
\draw [fill=black, yshift=1cm] (0:1cm)  circle (0.8pt);
\draw [fill=black, yshift=1cm] (180:1cm)  circle (0.8pt);
\draw [fill=black, yshift=1.4cm] (-40:1.4cm)  circle (0.8pt);
\draw [fill=black, yshift=1.4cm] (220:1.4cm)  circle (0.8pt);
\draw [fill=black, yshift=2cm] (-62:2cm)  circle (0.8pt);
\draw [fill=black, yshift=2cm] (242:2cm)  circle (0.8pt);
\draw [fill=black, yshift=2.8cm] (-74:2.8cm)  circle (0.8pt);
\draw [fill=black, yshift=2.8cm] (254:2.8cm)  circle (0.8pt);
\draw [fill=black, yshift=4cm] (-82:4cm)  circle (0.8pt);
\draw [fill=black, yshift=4cm] (262:4cm)  circle (0.8pt);
\draw [fill=black, yshift=5.6cm] (-86:5.6cm)  circle (0.8pt);
\draw [fill=black, yshift=5.6cm] (266:5.6cm)  circle (0.8pt);

\draw [fill=black] (0,1) circle (0.8pt);
\draw [fill=black] (0,0) circle (0.8pt);
\node[below] at (0,1) {$z$};
\node[below] at (0,0) {$x$};

\end{tikzpicture}
\end{center}
\caption{A metric space with infinite covering radius}\label{fig:InfiniteCovRad}
\end{figure}

One can make this space into a path connected example
by connecting a single end point of each component to the point $x$.
One can also make a similar bounded example: a space within the open disk of
radius 1 in the plane, with a double cover that is $\delta$-trivial for all
$\delta<1$ but not $1$-trivial. 
\end{exa}

\begin{rem}[Non-constant degree]\label{rem:nonconstant}
Given a covering $p\colon\; Y\to X$ the degree of a point $x\in X$ is the
cardinality of the fiber $p^{-1}(x)$ over $x$, which may be infinite.  If this
cardinality is the same for all $x\in X$, then we say that the covering has
constant degree.  Since $Y$ is locally trivial over $X$ this degree is locally
constant on~$X$, so it is constant on the connected components of~$X$.  When
considering the covering radius we will often restrict to the case that the
covering has constant degree.  To see how to express the covering radius in the
general case, note that for any covering $p\colon\;Y\to X$ that is not of
constant degree the fibers of the degree map partitions $X$ into at least two
open subsets $X_i$, where $i$ ranges over a suitable index set $I$, and for
each $i$ the space $Y_i=p^{-1}(X_i)$ is a cover of $X_i$ of constant degree.
Then the covering radius of $Y$ over $X$ is the infimum of the following set:
\[
r(Y/X)=\inf \{d(X_i, X_j)\colon\; i\ne j\}\cup \{r(p^{-1}(X_i)/X_i)\colon\; i
\in I\}.  \]
For example, if $X$ consists of two points and $Y$
is a non-trival covering space of $X$ then $r(Y/X)$ is the distance between the
two points of~$X$.
\end{rem}

\begin{dfn}
The \emph{covering spectrum}, denoted $\CovSpec(X)$, of a metric space $X$
is the set of all $r(Y/X)$ as $Y$ varies over all non-trivial coverings
of $X$ of constant degree.
\end{dfn}

We will show in the next section that for connected locally path connected
spaces the covering spectrum is the jump set of a length map on the fundamental
group. We will also see that for compact Riemannian manifolds,
the prime object of interest in this paper, the definition above is equivalent
to the definition given by Sormani-Wei \cite{SW}; see Remark \ref{rem:SWdef}.
We conclude this section with some general remarks.

The covering spectrum of $X$ is a subset of $\R_{\ge 0}\cup \{\infty\}$.
It is empty if and only if all constant degree covers of $X$ are trivial.
In Example \ref{exa:ZeroRadius} the covering spectrum
is the set of diameters of the circles together with the element~$0$.
If we do not use the Euclidean distance, but instead view $X$ as a length space
by taking the path-length metric, then the covering spectrum consists of 
half the circumferences of the circles together with the element~$0$.
In Example \ref{exa:InfRad} the covering spectrum is~$\{\infty\}$ and that of
its bounded analog is $\{1\}$. The same is true for their path connected
versions. These topological spaces cannot be made into length spaces, so this
is a situation in which the definition of the covering spectrum as given in
\cite{SW} does not apply.

If $X$ is connected then one may drop the condition ``of constant degree'' from
the definition.  Discrete metric spaces have an empty covering spectrum, and
more generally, the covering spectrum of a union of disjoint open subspaces of
a metric space is the union of their covering spectra.

%%%%%%%%%%%%%%%%%%%%%%%%%%%%%%%%%%
%%%The covering spectrum and the fundamental group %%%%
%%%%%%%%%%%%%%%%%%%%%%%%%%%%%%%%%%

\section{The covering spectrum and the fundamental group}\label{Sec:CovFund}

In this section we see how
to express the covering spectrum in terms of additional structure on the
fundamental group. The main result of this section is a proof of Proposition
\ref{prop:CovJump}. First, we recall some terminology.

A \emph{path} in a topological space $X$
is a continuous map $\sigma\colon\; [0,1]\to X$. We say that $\sigma$ is a
\emph{loop} if $\sigma(0)=\sigma(1)$, and we then say that the loop is
\emph{based at}~$\sigma(0)$.  If $p\colon\; Y\to X$ is a covering map and
$\sigma$ is a
path in $X$, then a \emph{lift} of $\sigma$ to $Y$ is a path $\widetilde
\sigma$ in $Y$ such that $p\circ \widetilde \sigma=\sigma$. The lemma of
\emph{unique
path lifting} states that the map $\widetilde \sigma \mapsto \widetilde
\sigma(0)$ gives a bijection between the set of lifts of $\sigma$ to $Y$ and
the fiber $p^{-1}(\sigma(0))$ over the starting point of~$\sigma$.
Given a path $\sigma$ its \emph{inverse} $\overline \sigma$
is the path given by ${\overline\sigma}(t)=\sigma(1-t)$, and if $\tau$
is also a path in $X$ and $\sigma(1)=\tau(0)$, then $\sigma * \tau$ is the
\emph{composed path} sending $t$ to $\sigma(2t)$ if $t\in [0,1/2]$ and to
$\tau(2t-1)$ when $t\in [1/2,1]$; that is, we travel along $\sigma$ followed by
$\tau$. When $\rho$ is a path in $X$ with $\rho(0)=\tau(1)$, we recall that
$(\sigma * \tau)* \rho$ is path homotopic to $\sigma*(\tau* \rho)$; hence,
there is no ambiguity in writing $[\sigma * \tau * \rho]$ for the path homotopy
class of both paths.
If $\sigma$ is a loop in $X$ based at $x$, then we denote its homotopy class in
the fundamental group $\pi_1(X,x)$ by~$[\sigma]$. The group operation in
$\pi_1(X,x)$ is given by $[\sigma][\tau]=[\sigma*\tau]$ and the inverse by
$[\sigma]^{-1}=[{\overline \sigma}]$.

By a \emph{filtration} $\Fil^\bullet G$ of a group $G$ we mean a family of
subgroups $\Fil^i G$ of $G$, with $i$ ranging over an ordered index set $I$,
such that $\Fil^i G\subset \Fil^j G$ when $i<j$.
The \emph{jump set} of the filtration is the subset of $I$ given by
\[
\Jump(\Fil^\bullet G)=\{i\in I\colon\Fil^iG\ne
\Fil^jG\textrm{ for all }j\in I\textrm{ with } i<j\}.
\]
That is, $i \in \Jump(\Fil^\bullet G)$ if and only if $\Fil^iG$ is a \emph{proper subset} 
of $\Fil^jG$ for all $j >i$ (cf. Definition~\ref{def:IntroJumpSet} and Example~\ref{Exa:FlatTori}).
In this paper our index set $I$ will always be the set of positive real numbers
(with the usual ordering) and the subgroups of the filtration will always be
normal subgroups of~$G$.

For a metric space $(X,d)$ with  
base point $x_0$, we now define a
filtration $\Fil^\bullet \pi_1(X,x_0)$ on the fundamental group $\pi_1(X,x_0)$
of $X$.
This filtration 
will determine the
covering radius of any covering of $X$ and hence encodes the covering spectrum.
For $\delta > 0$ we let $\Fil ^\delta \pi_1(X,x_0)$
be the subgroup of $\pi_1(X,x_0)$ generated by the elements of the form $[\alpha *
\si * \overline{\alpha}]$, where $\si$ is a loop contained inside some open
$\delta$-ball and $\alpha$ is a path from $x_0$ to $\si(0)$.
These generators are precisely the homotopy classes of loops based at $x_0$
that are \emph{freely} homotopic to a loop completely contained in some
open $\delta$-ball. 
We note that in \cite[Section
2]{SW} the same filtration is considered for complete length spaces, where
$\Fil ^\delta \pi_1(X,x_0)$ is denoted by $\pi_1(X, \delta, x_0)$.
The next lemma expresses the covering radius 
(see Definition \ref{dfn:covrad})
in terms of this filtration.

\begin{lem}\label{lem:lpc}
Let $(X,d)$ be a connected and locally path connected
metric space, with base point $x_0$, and let
$Y$ be a non-trivial covering space of~$X$. Then
$Y$ is $r(Y/X)$-trivial over~$X$. Moreover,
$r(Y/X)$ is the maximum of all $\delta > 0$ such that
for every loop $\sigma$ in $X$ based at $x_0$ with
$[\sigma]\in \Fil^\delta\pi_1(X,x_0)$, every lift to $Y$ of
$\sigma$ is a closed loop.
\end{lem}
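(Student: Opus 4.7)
The plan is to reduce both assertions to the single equivalence
\[
Y \text{ is } \delta\text{-trivial over } X \iff \Fil^\delta \pi_1(X,x_0) \subset N,
\]
where $N \subset \pi_1(X,x_0)$ is the normal subgroup consisting of those $[\sigma]$ every lift of which to $Y$ is closed; equivalently, $N = \bigcap_{\tilde x_0 \in p^{-1}(x_0)} p_*\pi_1(Y,\tilde x_0)$. Granted this equivalence, the second assertion is immediate from \dref{dfn:covrad}, and the first follows from a compactness observation: every loop in $B_r(z)$ is contained in some $B_\delta(z)$ with $\delta < r$, so $\Fil^{r(Y/X)}\pi_1(X,x_0) = \bigcup_{\delta < r(Y/X)} \Fil^\delta \pi_1(X,x_0)$ is a nested union of subgroups all lying in $N$, hence itself in $N$.

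The forward direction I would dispatch via unique path lifting. A generator $[\alpha \ast \sigma \ast \overline\alpha]$ of $\Fil^\delta \pi_1(X,x_0)$ has $\sigma$ contained in an open $\delta$-ball $B$ over which $Y$ trivializes, so any lift of $\sigma$ remains in a single sheet mapped homeomorphically onto $B$ and is therefore a closed loop; concatenating with the lift of $\alpha$ and its reverse then produces a closed lift of the whole generator. Thus every generator lies in $N$, and so does $\Fil^\delta \pi_1(X,x_0)$.

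The reverse direction is the main obstacle, as it demands that the purely algebraic containment produce a geometric trivialization of $Y$ over every open $\delta$-ball $B = B_\delta(z)$. My approach would be first to trivialize over each path component $B_\beta$ of $B$ (open in $X$ by local path connectedness) and then to assemble. Given $z_\beta \in B_\beta$ and $\tilde z_\beta \in p^{-1}(z_\beta)$, let $V_{\tilde z_\beta}$ denote the path component of $p^{-1}(B_\beta)$ containing $\tilde z_\beta$, which is open since $Y$ inherits local path connectedness. The crucial step is injectivity of $p\colon V_{\tilde z_\beta} \to B_\beta$: if $\gamma_1, \gamma_2$ are paths in $B_\beta$ with a common endpoint, then conjugating the loop $\gamma_1 \ast \overline\gamma_2$ by any path from $x_0$ to $z_\beta$ yields a generator of $\Fil^\delta \pi_1(X,x_0) \subset N$, so its lift at $\tilde x_0$ closes up, and unique path lifting then forces the lifts of $\gamma_1$ and $\gamma_2$ starting at $\tilde z_\beta$ to share an endpoint. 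Combining this with local triviality of $p$ upgrades the resulting bijection to a homeomorphism $V_{\tilde z_\beta} \cong B_\beta$, so that $p^{-1}(B_\beta) = \bigsqcup_{\tilde z_\beta} V_{\tilde z_\beta}$ is a trivialization over $B_\beta$; finally, any choice of bijections between the fibers over reference points of the distinct path components of $B$ glues these into a trivialization of $Y$ over $B$.
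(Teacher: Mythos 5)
Your proposal is correct and follows essentially the same route as the paper's: both rest on the equivalence between $\delta$-triviality over an open set and the condition that all loops in it lift to closed loops (which the paper simply cites from Spanier's Lemmas 2.4.9 and 2.5.11, phrased component-by-component via the subgroups $p_{\#}\pi_1(Y,y_0)$ rather than via your single group $N$), and both deduce $r(Y/X)$-triviality from compactness of loop images. The only substantive difference is that you prove the covering-space input from scratch via the sheet construction; there just note that your final gluing step tacitly uses that the degree of $Y\to X$ is constant (true here because $X$ is connected), so that the fibers over the reference points of the different path components of a ball can indeed be put in bijection.
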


\begin{proof}
Recall first that for each open subset $U \subset X$ the cover
$Y$ is trivial over $U$ if and only if every lift to $Y$ of
a loop in $U$ is a loop in $Y$; \cite[Lemma 2.4.9]{Spanier}.

To show the first statement, let $B$ be an open ball of radius $r(Y/X)$ in $X$
centered at some point $x \in X$, and let $\sigma$ be a loop whose image is
completely contained inside $B$.  Then since the image of $\sigma$ is compact
we see that there is a $\delta < r(Y/X)$ such that the image of $\sigma$ is
completely contained inside the open ball of radius $\delta$ centered at $x$. By the
definition of the covering radius $r(Y/X)$, the covering $Y \to X$ is
$\delta$-trivial, hence we see that $\sigma$ lifts to a closed loop. It now
follows from our
initial remark that $Y$ is an $r(Y/X)$-trivial cover of $X$.

Next, we note that the connected components of $Y$ are open,
and they are themselves coverings of~$X$. For each component of $Y$ one may
choose a point $y_0$ in the fiber of this component over~$x_0$. With
\cite[Lemma 2.5.11, 2.4.9]{Spanier} one then shows that this component is
$\delta$-trivial over $X$ if and only if $\Fil^\delta\pi_1(X,x_0)$ is contained
in the image of $\pi_1(Y,y_0)$ under the map induced by the covering map.
Thus, the second statement follows.
\end{proof}

\noindent
It follows from the lemma that
the covering spectrum of a locally path connected space does not contain the
element~$\infty$ (cf. Example~\ref{exa:InfRad}).

\begin{prop}\label{prop:CovSpecJSet}
If $(X,d)$ is a connected and locally path connected metric space, then
$$
\CovSpec(X)-\{0\} = \Jump(\Fil^\bullet \pi_1(X,x_0)).
$$
\end{prop}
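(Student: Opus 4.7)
The plan is to show both inclusions of the claimed equality, relying on Lemma~\ref{lem:lpc} for the forward direction and on Spanier's construction of a covering space from an open cover for the reverse.

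For the inclusion $\CovSpec(X)-\{0\}\subseteq \Jump(\Fil^\bullet\pi_1(X,x_0))$, I would suppose that $\delta = r(Y/X)>0$ for a non-trivial constant-degree cover $Y\to X$. By Lemma~\ref{lem:lpc}, $\delta$ is the largest value for which every lift to $Y$ of every loop $\sigma$ at $x_0$ with $[\sigma]\in \Fil^\delta \pi_1(X,x_0)$ is closed. For any $\epsilon>\delta$, maximality therefore produces some loop $\sigma$ with $[\sigma]\in\Fil^\epsilon\pi_1(X,x_0)$ having a lift that fails to close; such a $[\sigma]$ cannot lie in $\Fil^\delta\pi_1(X,x_0)$, so $\Fil^\delta\pi_1(X,x_0) \ne \Fil^\epsilon\pi_1(X,x_0)$, which is exactly the condition $\delta\in\Jump(\Fil^\bullet \pi_1(X,x_0))$.

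For the reverse inclusion, I would fix $\delta\in \Jump(\Fil^\bullet\pi_1(X,x_0))$ and consider the open cover $\mathcal U_\delta$ of $X$ by open $\delta$-balls. The Spanier generators associated to $\mathcal U_\delta$, namely classes of the form $[\alpha*\sigma*\overline\alpha]$ with $\sigma$ a loop in some element of $\mathcal U_\delta$, coincide by definition with the generators of $\Fil^\delta\pi_1(X,x_0)$, and a conjugation computation shows this subgroup is normal in $\pi_1(X,x_0)$. Since $X$ is connected and locally path connected, Spanier's construction supplies a connected regular covering $p\colon X^\delta\to X$ with $p_\ast\pi_1(X^\delta,\tilde x_0)=\Fil^\delta\pi_1(X,x_0)$ for any chosen lift $\tilde x_0$ of $x_0$. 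The jump condition forces $\Fil^\delta\pi_1(X,x_0)$ to be a proper subgroup of $\pi_1(X,x_0)$---otherwise $\Fil^\epsilon$ would also equal $\pi_1(X,x_0)$ for every $\epsilon>\delta$---so $X^\delta$ is a non-trivial cover, which has constant degree by connectedness. Applying Lemma~\ref{lem:lpc} to $X^\delta$ then gives $r(X^\delta/X)\geq \delta$; strict inequality would produce some $\epsilon\in(\delta,r(X^\delta/X))$ for which every loop in $\Fil^\epsilon\pi_1(X,x_0)$ lifts to a closed loop in $X^\delta$, forcing $\Fil^\epsilon\pi_1(X,x_0)\subseteq \Fil^\delta\pi_1(X,x_0)$ and contradicting $\delta\in\Jump$. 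Therefore $r(X^\delta/X)=\delta$ and $\delta\in\CovSpec(X)-\{0\}$.

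The principal obstacle is the construction of $X^\delta$ without any semilocal simple connectedness hypothesis on $X$. This is exactly Spanier's theorem on coverings associated to an arbitrary open cover of a connected, locally path connected space: the covering exists and its image subgroup equals the Spanier group of the chosen open cover. Granting this classical input, the rest of the argument is a direct comparison of filtration subgroups through Lemma~\ref{lem:lpc}.
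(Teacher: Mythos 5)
Your proof is correct and follows essentially the same route as the paper: the forward inclusion is read off from Lemma~\ref{lem:lpc}, and the reverse inclusion uses Spanier's covering associated to the open cover by $\delta$-balls (whose Spanier group is exactly $\Fil^\delta\pi_1(X,x_0)$) together with the jump condition to pin down $r(X^\delta/X)=\delta$. Your added remarks on normality, properness of $\Fil^\delta$, and constant degree of the connected cover are details the paper leaves implicit, but the argument is the same.
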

\begin{proof}
Suppose that $\delta\in\CovSpec(X)-\{0\}$. Then $\delta\ne \infty$ by
Lemma \ref{lem:lpc}, and by definition there is a covering $p\colon\;Y \to X$ with
$r(Y/X)=\delta$.
Again by the lemma, the collection of homotopy classes of
loops based at $x_0$ all of whose
lifts to $Y$ are themselves loops contains $\Fil^\delta \pi_1(X,x_0)$, but it does not
contain the set $\Fil^\epsilon \pi_1(X,x_0)$ for any $\epsilon>\delta$. It follows that
$\delta$ is a jump for the filtration, so the inclusion ``$\subset$'' holds.

Now, let $\delta>0$. Following \cite[Def. 2.3]{SW} we see with
\cite[Theorem 2.5.13]{Spanier} that there is a connected and locally path
connected covering space $p: \tX^\delta \to X$ such that
$p_{\#}(\pi_1(\tX^\delta)) = \Fil^\delta \pi_1(X,x_0) \subset \pi_1(X,x_0)$.
Hence, $\Fil^\delta \pi_1(X,x_0)$  consists of exactly those classes of loops
whose lifts to $\tX^\delta$ are all loops. With the lemma, or by using
\cite[Lemma 2.5.11, 2.4.9]{Spanier}, we see that 
for all $\delta \in \Jump(\Fil^\bullet \pi_1(X,x_0))$,
we have $r(\tX^\delta/X)=\delta$ and ``$\supset$'' follows.
\end{proof}

\begin{rem}\label{rem:SWdef}
The notion of the covering spectrum was first defined by Sormani and Wei in
\cite{SW} for so-called complete length spaces, and the proposition above shows
that their definition exactly gives the set $\Jump(\Fil^\bullet \pi_1(X,x_0)) =
\CovSpec(X) - \{0\}$.  The only difference with our notion of covering
spectrum for complete length spaces is that for non-compact $X$ we sometimes
have $0$ in the covering spectrum such as for the space in Example
\ref{exa:ZeroRadius} (endowed with shortest path length metric).
We note that under our definition, $ 0 \in \CovSpec(X)$ if and only if there
is a covering space $Y \to X$ such that $X$ has open subsets
of arbitrary small diameter that are not evenly covered.
\end{rem}

To conclude this section we will make the filtration of the fundamental group
more explicit for the case of compact Riemannian manifolds. We will identify
generators for the normal subgroups in the filtration: the classes of short
closed geodesics. In later sections, this will allow us to compute the covering
spectra of some familiar classes of manifolds. In particular, in
Example~\ref{Exa:Heis} we will see how to compute the covering spectrum of a
Heisenberg manifold.

Let $M$ be a manifold and let $g$ be a Riemannian metric on~$M$. Choose a base
point $x_0\in M$. In Section~\ref{Sec:Results} we defined the map $m_g\colon\;
\pi_1(M, x_0) \to \nnR$ by sending a class to the infimum of all lengths of
loops that are freely homotopic to it.

\begin{prop}\label{prop:lengthmap}
For every connected manifold $M$ with Riemannian metric
$g$ and base point $x_0\in M$ and every $\delta>0$ we have
$$
\Fil^\delta \pi_1(M,x_0)=
\langle \gamma \in \pi_1(M,x_0)\colon\; m_g(\gamma) <2\delta\rangle.
$$
\end{prop}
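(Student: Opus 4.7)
\emph{Approach and easy inclusion.} I prove both inclusions. Observe first that both subgroups are normal in $\pi_1(M,x_0)$: the right-hand side because $m_g$ is a class function on $\pi_1(M,x_0)$ (by construction it only depends on the free homotopy class), and the left-hand side because its defining set of generators $[\alpha * \sigma * \overline{\alpha}]$ is closed under conjugation---prepending any loop $\tau$ at $x_0$ to $\alpha$ produces another generator of the same form. This normality will let us freely change basepoints and pass between based and free homotopy. For the easy direction $\supseteq$, suppose $m_g(\gamma)<2\delta$. Then $\gamma$ is freely homotopic to a rectifiable loop $\sigma$ of length $L<2\delta$. For any basepoint $q$ on $\sigma$, every other point of $\sigma$ is within distance at most $L/2<\delta$ of $q$ along the shorter of the two sub-arcs, so the image of $\sigma$ lies in the open ball $B_\delta(q)$. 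Choosing any path $\alpha$ from $x_0$ to $q$, the generator $[\alpha * \sigma * \overline{\alpha}]$ of $\Fil^\delta\pi_1(M,x_0)$ is conjugate to $\gamma$, and normality yields $\gamma\in \Fil^\delta\pi_1(M,x_0)$.

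\emph{Harder inclusion.} For the inclusion $\subseteq$, take a generator $[\alpha * \sigma * \overline{\alpha}]$ with $\sigma$ a loop based at $q$ and contained in an open ball $B_\delta(p)$. After replacing $\sigma$ by a nearby piecewise-smooth based-homotopic loop in the ball, we may assume $\sigma$ is rectifiable. By compactness of the image of $\sigma$ there is $\delta^*<\delta$ with $\sigma\subset B_{\delta^*}(p)$, so in particular $d(p,q)<\delta^*$. Let $\rho$ be a minimizing geodesic from $p$ to $q$ (it lies in $B_{\delta^*}(p)$), and form the based loop $\sigma'=\overline{\rho} * \sigma * \rho$ at $p$, which still lies in $B_{\delta^*}(p)$. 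Partition the parameter interval of $\sigma'$ into subintervals $[t_i,t_{i+1}]$ on which the arc length of $\sigma'$ is strictly less than $2(\delta-\delta^*)$. For each interior node $\sigma'(t_i)$ choose a minimizing geodesic $\rho_i$ from $p$ (of length less than $\delta^*$), and let $\rho_0=\rho_n$ be the constant path at $p$. The composite loops $\lambda_i=\rho_i * \sigma'|_{[t_i,t_{i+1}]} * \overline{\rho_{i+1}}$ are based at $p$ and each has length strictly less than $\delta^*+2(\delta-\delta^*)+\delta^*=2\delta$, so $m_g([\lambda_i])<2\delta$. The standard insertion of $\overline{\rho_i} * \rho_i$'s between consecutive pieces of $\sigma'$ gives $[\sigma']=[\lambda_0][\lambda_1]\cdots [\lambda_{n-1}]$ in $\pi_1(M,p)$, and transporting this equality via any path from $x_0$ to $p$ expresses $[\alpha * \sigma * \overline{\alpha}]$, up to conjugation in $\pi_1(M,x_0)$, as a product of conjugates of the $[\lambda_i]$. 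Each $[\lambda_i]$ lies in the right-hand subgroup, and normality of the right-hand side closes the argument.

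\emph{Main obstacle.} The main obstacle---and the essential use of the fact that the ball in the definition of $\Fil^\delta$ is \emph{open}---is securing the strict inequality for the length of each $\lambda_i$. If one worked only with $\sigma\subset \overline{B_\delta(p)}$ and geodesics of length up to $\delta$, the estimate would degrade to length at most $2\delta$, which is not enough to certify $m_g([\lambda_i])<2\delta$. Extracting a strictly smaller radius $\delta^*<\delta$ from compactness of $\sigma$, and then calibrating the fineness of the partition against the gap $\delta-\delta^*$, is precisely what resolves this difficulty; the remaining ingredients (normality of both subgroups, changes of basepoint, and the combinatorial decomposition of $\sigma'$) are essentially formal.
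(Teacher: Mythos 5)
Your proof is correct and follows essentially the same route as the paper's: for the hard inclusion, both arguments use compactness to shrink the ball to a strictly smaller radius $\delta^*<\delta$, subdivide the loop into arcs whose length is calibrated against the gap $\delta-\delta^*$, and close them up with paths to the center of length less than $\delta^*$; for the easy inclusion both observe that a loop of length $L<2\delta$ lies in an open $\delta$-ball. The only differences are cosmetic --- you invoke normality of both subgroups to avoid tracking basepoint paths, where the paper writes out the explicit product of conjugates --- with one small caveat: since the proposition is stated for arbitrary (possibly incomplete) connected manifolds, minimizing geodesics from $p$ to $q$ and to the division points need not exist, so you should instead take arbitrary paths of length less than $\delta^*$, which do exist by the definition of the Riemannian distance (this is what the paper does).
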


\begin{proof}
Suppose that $\si$ is a loop based at $x_0$ in $M$ so that
$m_g([\si])<2\delta$. Then $\si$ is freely homotopic to a loop $\tau$ of length
below~$2\delta$.  It is clear that $\Image(\tau)$ is contained in the open ball
$B_\delta(\tau(0))$ of radius $\delta$ centered at $\tau(0)$.
If $\al$ is the path from $x_0$ to $\tau(0)$, given by the moving base point during
such a homotopy, then $[\si]=[\al * \tau * \overline \al]\in\Fil^\delta
\pi_1(M,x_0)$.
This shows ``$\supset$''.

For the other inclusion, recall that $\Fil^\delta \pi_1(M,x_0)$ is generated
by elements of the form $[\al*\sigma*\overline{\al}]$ where
$\al$ is a path from $x_0$ to a point $x_1\in M$ and $\sigma$ is a loop
based at $x_1$ that is contained in $B_\delta(x_2)$ for some $x_2\in X$.
Since $B_\delta(x_2)$ is locally simply connected we can assume that $\sigma$ has finite
length.  The image of $\sigma$ is compact, so it is in fact contained in
$B_{\delta-\epsilon}(x_2)$ for some $\epsilon>0$.
Choose any path $\al_1$ from $x_1$ to~$x_2$.
We can now break $\sigma$ into finitely many sections of length
less than $\epsilon$, and by connecting the division points with $x_2$
by paths of length smaller than $\delta-\epsilon$ we may write
$[\overline{\al_1}* \sigma* \al_1]\in\pi_1(M,x_2)$ as a finite product
$[\si_1] \cdots [\si_n]$ with
each $\si_i$ a loop based at $x_2$ of length less than
$2(\delta-\epsilon) +\epsilon <2\delta$.
Writing $\beta=\al *{\al_1}$ we now have
\[
[\al*\sigma*\overline{\al}] =
[ \beta *\si_1 *\overline\beta]
[ \beta *\si_2 *\overline\beta]
\cdots
[ \beta *\si_n *\overline\beta]
\]
in $\pi_1(M,x_0)$, and since we have $m_g([\beta *\si_2
*\overline\beta])<2\delta$ for each $i$,
our other inclusion follows.
\end{proof}

\begin{cor}[cf. {\cite[Lemma 4.9]{SW}}]\label{cor:geolength}
Let $(M,g)$ be a compact Riemannian manifold and let
$p\colon\;N\to M$
be a non-trivial covering space. Then the covering radius
$r(N/M)$ is half the length of the shortest closed
geodesic $\sigma$ in $M$ that has a lift to $N$ that is not a closed loop.
\end{cor}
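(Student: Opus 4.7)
The plan is to combine Proposition~\ref{prop:lengthmap}, which identifies $\Fil^\delta\pi_1(M,x_0)$ with the subgroup generated by classes of $m_g$-value less than $2\delta$, with Lemma~\ref{lem:lpc}, which characterizes $r(N/M)$ as the supremum of those $\delta$ for which every class in $\Fil^\delta\pi_1(M,x_0)$ lifts to a closed loop in $N$. Let $L$ denote the length of the shortest closed geodesic in $M$ having a non-closed lift to $N$; this exists because the length spectrum of the compact manifold $M$ is discrete and the cover $p$ is non-trivial. The key algebraic observation driving the argument is that, for a loop $\sigma$ based at $x_0$, the property \emph{every lift of $\sigma$ to $N$ is a closed loop} depends only on the free homotopy class of $\sigma$: after fixing some $y_0\in p^{-1}(x_0)$ it is equivalent to $[\sigma]$ lying in the normal core of $p_*\pi_1(N,y_0)$, and membership in a normal subgroup is invariant under conjugation.

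For the upper bound $r(N/M)\le L/2$, I would let $\sigma$ realize $L$ and first argue that $m_g([\sigma])=L$: any shorter closed geodesic freely homotopic to $\sigma$ would, by the key observation, inherit a non-closed lift, contradicting the minimality of $L$. If we then had $r(N/M)>L/2$, Proposition~\ref{prop:lengthmap} would give $[\sigma]\in \Fil^{r(N/M)}\pi_1(M,x_0)$, forcing every lift of $\sigma$ to be closed by Lemma~\ref{lem:lpc}, a contradiction.

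For the lower bound $r(N/M)\ge L/2$, I would assume $r(N/M)<L/2$ and pick $\delta\in(r(N/M),L/2)$. Lemma~\ref{lem:lpc} then yields some $[\tau]\in\Fil^\delta\pi_1(M,x_0)$ with a non-closed lift, and Proposition~\ref{prop:lengthmap} lets me write $[\tau]$ as a product of classes $\gamma_i^{\pm 1}$ with each $m_g(\gamma_i)<2\delta<L$. Since the normal core of $p_*\pi_1(N,y_0)$ is a subgroup, at least one $\gamma_i$ must lie outside it, whence the minimizing closed geodesic in the free homotopy class of $\gamma_i$ has length strictly less than $L$ and, by the key observation, a non-closed lift---contradicting the definition of $L$. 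The main obstacle is really just establishing the free homotopy invariance stated above; once that is in hand, the remainder is a direct translation between the Riemannian data and the filtration of $\pi_1$.
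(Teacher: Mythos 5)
Your proof is correct and takes essentially the same route as the paper's: both arguments rest on Proposition~\ref{prop:lengthmap}, Lemma~\ref{lem:lpc}, the discreteness of the image of $m_g$, and the observation that ``all lifts to $N$ are closed'' is a conjugation-invariant condition cutting out a subgroup of $\pi_1(M,x_0)$, so that it suffices to check it on the generators of the filtration. The only cosmetic differences are that the paper gets the upper bound $r(N/M)\le L/2$ directly by noting that the image of $\sigma$ lies in an open ball of radius $\epsilon/2$ for every $\epsilon>L$, and states the lower bound without passing to the contrapositive; also, for a disconnected cover $N$ your ``normal core of $p_{\#}\pi_1(N,y_0)$'' should be read as the intersection of the corresponding cores over the components of $N$, which changes nothing in the argument.
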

\begin{proof}
By \cite[Lemma 2.4.9]{Spanier} and the fact that the image of $m_g$ is discrete
(see Example~\ref{exa:MinMark}), there is a shortest closed geodesic in $M$
that has a lift to $Y$ that is not a closed loop. Let $\sigma$ be such a
geodesic and let $\delta=m_g([\sigma])$ be its length.
For every $\epsilon>\delta$ the image of $\sigma$ lies within an
open ball of radius $\epsilon/2$, so the covering $p$ is not
$\epsilon/2$-trivial,
and $r(N/M)\le \delta/2$. 
On the other hand, for any loop in $M$ shorter
than $\delta$ all lifts to $N$ are closed loops. Proposition
\ref{prop:lengthmap} then tells us that all loops in $M$ whose homotopy class
in $\Fil^{\delta/2} \pi_1(M)$ have only closed loops as lifts to $N$, and with
Lemma \ref{lem:lpc} this gives $r(N/M)\ge \delta/2$.  
\end{proof}

\begin{proof}[Proof of Proposition \ref{prop:CovJump}]
This is an immediate consequence of Propositions \ref{prop:CovSpecJSet} and
\ref{prop:lengthmap} and the definitions of $\Fil^\bullet$ and $\Fil_{m_g}^\bullet$.
\end{proof}

The moral of Proposition \ref{prop:CovJump} is that it allows
us to compute the covering spectrum of a Riemannian manifold in an easy way as
the jump set of the filtration associated to a length map.  Under the
hypotheses of the proposition, the length map has a closed discrete image in
$\nnR$,
and the group it is defined on, the
fundamental group, is finitely generated.  We now describe how to compute the
jump set in this scenario.

\begin{jsa}[cf. p. 54 of \cite{SW}]\label{jsa}
Let $G$ be a group that is \emph{finitely generated} and let $m: G \to \nnR$ be
a length map with \emph{closed discrete} image.  Then $\Jump(\Fil^\bullet_m G)$ can be
computed as follows. Let
$$
\begin{array}{l}
\delta_1=\inf\{m(g)\colon g \in G, g\ne 1\} \\
\delta_2=\inf\{m(g)\colon g \in G, g\not\in \langle  h \in G\colon\;
m(h)\le\delta_1\rangle \} > \delta_1\\ \delta_3=\inf\{m(g)\colon g \in G,
g\not\in \langle  h \in G\colon\; m(h)\le\delta_2\rangle \} > \delta_2\\
\cdots
\end{array}
$$
where we continue until $\langle  g \in G\colon\; m(g) \le \delta_r\rangle =G$.
The process terminates because $G$ is finitely generated: if $M$ is the largest
value attained by $m$ on some finite set of generators of $G$, then $m$ assumes
only finitely many values up to $M$ (since the image of $m$ is closed and discrete), and the $\delta_i$ are among them.
Then we have
$$
\Jump(\Fil^\bullet_m G)=\{\delta_1,\ldots,\delta_r\}.
$$
In particular we recover the result that the covering spectrum of a compact
manifold is always finite \cite[Theorem 3.4]{SW}.  As an application we show in
the following example how to compute the covering spectrum of a flat torus.
\end{jsa}

\begin{exa}[Flat tori]\label{Exa:FlatTori}
Let $E$ be a Euclidean space, i.e., a vector space over $\R$
of finite dimension $n$ with an inner product~$\inner$.
Let $\mL$ be a lattice in $E$ of full
rank, and let $T$ be the flat torus $T=E/\mL$, where the Riemannian metric is
induced by~$\inner$.  Then we can identify the fundamental group of $T$ with
$\mL$, and the minimum marked length map $m$ on $\mL$ is given by
$m(l)= \|l\|= \sqrt{\langle l, l \rangle}$. We then have
$\CovSpec(T)=\frac{1}{2}\Jump(\Fil_m^\bullet \mL)$, where for $\delta>0$ we
have $\Fil_m^\delta\mL= \langle l: \|l\|< \delta\rangle$.

Those jumps of the filtration where the rank of the sublattice
increases, are the so-called \emph{successive minima} of the
lattice. Counting these jumps with a \emph{multiplicity}, which is the increase
in rank of the sublattice at the jump, we see that there are $n$ successive
minima. However, the covering spectrum of $T$ can have more than $n$ elements:
there can also be jumps where the rank does not increase.

For instance, when $E$ is $5$-dimensional consider a lattice $\mL$ in $E$
spanned by orthogonal vectors $e_1, \ldots , e_5$ where $1 \leq \| e_1 \| <
\cdots < \| e_5 \| < \frac{\sqrt{5}}{2}$.  With the procedure above
we see that the covering spectrum of the flat torus $E / \mL$
is given by $\frac{1}{2}\{ \| e_1 \| , \| e_2, \|, \| e_3\|,
\| e_4 \|, \| e_5 \| \}$.  Now let $v = \frac{1}{2}(e_1 + \cdots + e_5)$ and
consider the lattice $\mL' = \langle \mL, v \rangle$.
The lattice $\mL$ is a sublattice of $\mL'$ of index $2$ and
any vector in $\mL'$ that is not in $\mL$ is of the form $v + w$,
where $w \in \mL$, and its length is at least $\frac{\sqrt{5}}{2}$.
It follows that $\CovSpec(E/\mL')=\CovSpec(E / \mL)\cup\{\frac{1}{2}\|v\|\}$.

We recall from the introduction that the covering spectrum of $M=E/\mL'$ is
also measuring when a particular family $\{\tM^\delta\}_{\delta > 0}$ of
regular covers of $M$ changes isomorphism type.  These coverings, as defined
in \cite[Def. 3.1]{SW}, can be given by $\tM^\delta=E/\Fil^\delta \mL'$.  The
above example highlights the difference between isomorphism type as coverings
and as topological spaces. Indeed, if $ \frac{1}{2}\|e_5\| <\delta \leq
\frac{1}{2}\|v\|$, then $\tM^\delta=E/\mL$, and if $\delta > \frac{1}{2}\|v\|$,
then $\tM^\delta=M$. These are both $5$-tori, but they are distinct regular
covers of $M$.

We revisit the covering spectra of flat tori in Section \ref{Sec:Isospectral}.
\end{exa}

%%%%%%%%%%%%%%%%%%%%%%%%%%%%%%%%%%%%%%%%%%%
%%%%%%%    Prescribing the Minimum Marked Length Spectrum  %%%%%%%%%
%%%%%%%%%%%%%%%%%%%%%%%%%%%%%%%%%%%%%%%%%%%

\section{Constructing metrics with a prescribed minimum marked length map}\label{Sec:Prescribing}
The purpose of this section is to give a proof of Theorems 
\ref{thm:IntroSystole} and \ref{thm:varyg}.
As we noted in Example~\ref{exa:MinMark}, every Riemannian metric
$g$ on a closed manifold $M$ gives rise to a length map $m_g\colon\;\pi_1(M)\to
\nnR$, known as the minimum marked length map, sending the homotopy class of a
loop to the infimum of the lengths of the loops that are freely homotopic to
it. The general problem underlying this section is to understand the nature of
those length maps that arise as the minimum marked length map associated to
some Riemannian metric.  Do these length maps satisfy additional properties not
yet implied by the length map axioms of Definition \ref{dfn:lengthmap}?

The next result, whose proof will take up most of this section, implies that
this is not the case if we restrict our attention to properties involving only
finitely many elements.  In addition to helping us establish Theorems
\ref{thm:IntroSystole} and \ref{thm:varyg}, the following result is also
instrumental in the proof of Theorem~\ref{thm:CovSunada} as we will see in
Section~\ref{Sec:Proof}.

\begin{thm}\label{thm:SystoleLengthMap}
Let $M$ be a closed and connected manifold of dimension at least $3$. 
Let $S$ be a finite subset of $\fund$ and suppose that
$m\colon\;S \to \nnR$ is a map satisfying:
\begin{enumerate}
\item[$(i)$] $m(x)>0$ for all non-trivial $x \in S$;
\item[$(ii)$] $m(x) \leq |k|m(y)$ for all $x, y \in S$ and $k\in\Z$ such that $x$ is conjugate
to $y^k$.
\end{enumerate}
Then there is a Riemannian metric $g$ on $M$ such that
$m_g(x)=m(x)$ for all $x\in S$.
In addition, for every $B>0$ the metric
$g$ may be chosen so that $m_g$ satisfies:
\begin{enumerate}
\item{} for all $x\in \fund$ for which
the set $\{|k|m_g(y)\colon\; y \in S,\; k\in \Z \textrm{ and } x \textrm{ is
conjugate to } y^k\}$ is non-empty with infimum $l_x < B$ we have $m_g(x)=l_x$;
\item{} for all other $x\in \fund$ we have $m_g(x)\ge B$.
\end{enumerate}
\end{thm}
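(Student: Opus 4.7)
The plan is to construct the metric $g$ by placing a short closed geodesic representative in each relevant free homotopy class, confined to pairwise disjoint tubular neighborhoods, and making the complementary ``bulk'' region metrically very large. We may assume without loss that $B > \max_{s \in S} m(s)$, since replacing $B$ by $B' = \max(B,\, 1 + \max_{s \in S} m(s))$ only strengthens both refined conclusions. Under this assumption, condition $(ii)$ yields $l_x = m(x)$ for every $x \in S$, so the first assertion $m_g(x) = m(x)$ for $x \in S$ becomes a consequence of property $(1)$, and it suffices to prove the refined statement. Let $K$ be the (finite) set of free homotopy classes $c$ of $M$ of the form $[y^k]$ for some $y \in S$ and $k \in \Z$ with $|k| m(y) < B$, and for each $c \in K$ set $l_c = \inf\{|k| m(y) : y \in S,\; k \in \Z,\; [y^k] = c\}$. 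Condition $(ii)$ implies $l_c = m(s)$ whenever $c = [s]$ for some $s \in S$.

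For each non-trivial $c \in K$, using general position in dimension $\geq 3$, choose a smooth embedded loop $\alpha_c$ in $M$ representing $c$ (this is where the dimension hypothesis is crucial, as it permits embedded representatives even for classes that are proper powers in $\fund$), and arrange the $\alpha_c$ to be pairwise disjoint. Fix pairwise disjoint closed tubular neighborhoods $U_c \cong S^1 \times D^{n-1}$ of each $\alpha_c$. Construct $g$ in two pieces: inside each $U_c$ install a product metric of the form $l_c^2\, d\theta^2 + \epsilon_c^2\, g_D$ (with $\theta \in \R/\Z$ and $g_D$ a small flat disk metric) so that $\alpha_c$ is a closed geodesic of length $l_c$ and the projection $U_c \to S^1$ is $1$-Lipschitz; outside the $U_c$ use a large scalar multiple $\lambda g_0$ of a fixed background metric $g_0$ on $M$, gluing to the product metrics via a partition of unity in a narrow collar. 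Choose $\lambda$ large enough that any path in the closure of the bulk that is not contractible rel endpoints into $\bigcup_c \partial U_c$ has $g$-length at least $B$.

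For the verification, let $\sigma$ be a closed loop in $(M,g)$ of length less than $B$. The bulk estimate forces every excursion of $\sigma$ into the bulk to be contractible rel endpoints into the boundary of some $U_c$, so $\sigma$ is freely homotopic to a loop contained in a single $U_{c'}$, giving $[\sigma] = k [\alpha_{c'}]$ for some $k \in \Z$. Applying the $1$-Lipschitz projection $U_{c'} \to S^1$ segment-by-segment on the $U_{c'}$-portions of $\sigma$ and bounding the total $\theta$-winding from below by $|k|$ gives $\mathrm{length}(\sigma) \geq |k| l_{c'} \geq l_{[\sigma]}$, the last inequality by the definition of $l_{[\sigma]}$ together with $[\sigma] = (c')^k$. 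Thus for $[x] = c \in K$, the embedded loop $\alpha_c$ gives $m_g(x) \leq l_c$, and the argument just sketched gives $m_g(x) \geq l_c$; while for $[x] \notin K$ no loop of length less than $B$ can represent $[x]$, so $m_g(x) \geq B$. I expect the main technical obstacle to be the bulk estimate: the partition-of-unity gluing must be carried out so that the product structure on each $U_c$ is preserved on a large sub-tube (to validate the projection estimate), the collar admits no short shortcuts, and the bulk proper is metrically inflated so that every non-contractible bulk excursion exceeds $B$ in length — an adaptation of the standard ``inflate the bulk, pinch the tubes'' technique.
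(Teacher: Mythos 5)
Your construction is the same in spirit as the paper's (disjoint embedded representatives, product-type metrics on tubes scaled to the prescribed lengths, an inflated bulk), and your variant of installing one tube per short power class in $K$ rather than one per element of $S$ is harmless. The genuine gap is exactly where you flag the ``main technical obstacle,'' and your proposed repair --- projecting the $U_{c'}$-portions of $\sigma$ segment-by-segment --- does not close it. A loop $\sigma$ of length $<B$ may enter and leave $U_{c'}$ arbitrarily many times; after each bulk excursion is homotoped rel endpoints into $\partial U_{c'}$, the pushed-in arcs themselves contribute to the $\theta$-winding, so the total winding of the tube portions alone need not be at least $|k|$. Since excursions can be arbitrarily short (grazing the boundary), their number is unbounded and the spurious winding cannot be absorbed by any fixed choice of $\lambda$ or collar width; indeed the decomposition of a general rectifiable loop into ``tube portions'' and ``excursions'' is not even well defined. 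A second, smaller gap: a tubular neighborhood of $\alpha_c$ need not be a product $S^1\times D^{n-1}$, since the normal bundle over a circle can be non-orientable; the paper's proof treats both bundle types.

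The device in the paper that removes all of this is a system of nested neighborhoods $T^1\subset T^2\subset\cdots\subset T^5=T$ of the embedded loops together with a trichotomy for a shortest representative $c$ of a nontrivial class: (A) $\Image(c)\subset T^3$; (B) $\Image(c)\subset M-T^2$; (C) $c$ meets both $T^2$ and $M-T^3$. In case (C) the metric on the ``moat'' $T^3-T^2$ dominates $\kappa h_1$, with $\kappa$ chosen in advance so that the moat has width at least $B$; in case (B) one uses $g\succeq g_0$ for a background metric $g_0$ whose systole is at least $B$. Consequently every noncontractible loop of length $<B$ lies \emph{entirely} inside a single tube, where the model-metric property (equivalently, your Lipschitz projection) applies to the whole loop and no segment-by-segment accounting is needed. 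If you introduce such a moat your verification goes through; without it, the key inequality $\mathrm{length}(\sigma)\ge |k|\,l_{c'}$ is not justified.
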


\begin{rem}\label{rem:SystoleLengthMap}
By Definition \ref{dfn:lengthmap} conditions $(i)$ and $(ii)$ are clearly
necessary for the existence of an extension of $m$ to a length map on $\fund$.
The theorem above implies that they are also sufficient, and that this
extension can be taken to be $m_g$ for a suitable metric $g$.
Conditions $(1)$ and $(2)$ describe the degree of control we have over 
the behavior of $m_g$ outside $S$.
In particular, this control allows us to ensure that in $(M,g)$ the
unoriented free homotopy classes (see Definition~\ref{def:MinLSpec}) containing
geodesics of length less than $B$ can only be found \emph{among} those
classes corresponding to elements of the form $y^k$ for some $y \in S$ and
$k \in \Z$: condition $(1)$ 
determines the collection of such ``short'' classes precisely. 
We will use this
in the proof of Theorem~\ref{thm:varyg}. 
\end{rem}

Let $M$ be a manifold and let $\mM$ denote the space of all Riemmannian
metrics on~$M$. Given two metrics $g, h \in \mM$ we will say that $g$ is
\emph{larger than} $h$, denoted $g \succeq h$, if for any tangent vector 
$v \in TM$ we have
$g(v,v) \geq h(v,v)$. Clearly, $\preceq$ defines a partial order on $\mM$.
The set $\mM$ is closed under addition, and under multiplication
by everywhere positive smooth functions on~$M$.

\begin{proof}[Proof of Theorem~\ref{thm:SystoleLengthMap}]
First, we note that by increasing $B$ we can reduce to the case that
$B > m(s)$ for all $s\in S$.

Let the manifold $S^1=\R/\Z$ be the standard circle. Since the dimension of $M$
is at least $3$, we can represent the free homotopy classes of the elements
$s\in
S$ by smooth embeddings
$\si_s\colon\; S^1\to M$ with pairwise disjoint images.
The tubular neighborhood theorem \cite[Thm.~5.2, Ch.~4]{Hirsch} says that for
each $s\in S$ there is a smooth vector bundle $N_s$ over $S^1$ together with a
diffeomorphism $i_s\colon\;N_s\to T_s \subset M$ onto an open subset $T_s$ of
$M$, whose composition with the zero
section $S^1\to N_s$ is the map~$\si_s$. The loops $\si_s$ have positive
distance between each other, so by shrinking the neighborhoods, if necessary, we
can also assume that the tubular neighborhoods $T_s=\im(i_s)$ are disjoint.

Let us consider the structure of such a tubular neighborhood $T_s$.  If $n$
is the dimension of $M$, then the bundle $N_s$ on $S^1$ has rank $n-1$.
There are up to isomorphism exactly two vector bundles of rank $r$ over $S^1$:
one orientable and one non-orientable \cite[Ch.~4, Sec.~4, Ex.~2]{Hirsch},
\cite[Chp. 5]{Ranicki}. In view of choosing metrics later, let $B^{n-1}$ be
the standard open unit ball around $0$ in $\R^{n-1}$, which is diffeomorphic to
each fiber of the vector bundle $N_s$ over $S^1$.
Now consider the quotient $(B^{n-1}\times \R)/\Z$, where 
$n\in \Z$ acts by sending $((x_1,x_2,\ldots, x_{n-1}) ,t)$
to $((x_1,x_2,\ldots, x_{n-1} ),t+n)$ if $N_s$ is orientable,
and to $(((-1)^n x_1,x_2,\ldots, x_{n-1}) ,t+n)$ if $N_s$ is non-orientable.
Then it follows that there is a commutative diagram:

\begin{center}
\begin{tikzpicture}
\matrix [column sep=14mm,row sep=14mm,matrix of math nodes,ampersand replacement=\&]
{
                   \& |(T)|  S^1           \& \\
  |(B1)| (B^{n-1}\times\R)/\Z \& |(B2)| N_s \& |(B3)| T_s \\
};
\draw
      (T)     edge [->,bend right] node [above left]{$t\mapsto (0,t)$}(B1)
              edge [->,bend left] node [above right]{$\si_s$}(B3)
              edge [->] node [left]{$0$-section}(B2)
      (B1)    edge [->] node [above] {$\sim$}      (B2)
      (B2)    edge [->] node [above] {$\sim$}      (B3)
              edge [->] node [below] {$i_s$}      (B3);
\end{tikzpicture}
\end{center}

\noindent
where the horizontal maps are diffeomorphisms. 

In both the orientable and non-orientable case, the diffeomorphisms give rise
to the following additional structure on~$T_s$.  First, the standard Riemannian
metric on $(B^{n-1}\times \R)/\Z$ gives a Riemannian metric $h_0$ on $T_s$ that
has the property that for every $k\in \Z$ each loop within $T_s$ homotopic to
$\si_s^k$ has length at least $|k|$, with equality for the loop $\si_s^k$
itself.  Second, we obtain a smooth map $r_s\colon\; T_s\to [0,1)$ by sending the
point associated to $(x_1,\ldots, x_{n-1} ,t)$ to the length of the vector
$(x_1,\ldots, x_{n-1})\in B^{n-1}$. Note that $r_s$ is $0$ on $\Image(\si_s)$.

For each $s\in S$ we will consider five open neighborhoods
$T^i_s=\{x\in T_s\colon\; r_s(x)<i/5\}$ on $\Image(\si_s)$ for $i=1,\ldots,5$,
where $T_s^5=T_s$. We write $T=\bigcup_s T_s$ and $T^i=\bigcup_sT_s^i$,
and we let $r\colon T \to [0,1)$ be the smooth map given by $r_s$ on each~$T_s$.

Let $h_1$ be the metric on $T$ whose restriction to the tubular neighborhood
$T_s$ is given by $m(s)\cdot h_0$.  Then with respect to the metric $h_1$, the
loop $\sigma_s$ has length $m(s)$, for each $s \in S$.
Now, let $\kappa \geq 1$ be a constant such that with respect to
$\kappa \cdot h_1$ the distance between $T-T^3$ and $T_2$ is at least~$B$.
Since $M$ is compact, we may fix a Riemannian metric $g_0$ on $M$ such that
every non-contractible loop in $M$ has length at least~$B$; that is, we may
choose $g_0$ so that its systole is at least $B$.

\begin{lem}
With the notation and assumptions as above, 
there is a Riemannian metric $g$ on $M$ that satisfies the following
properties:
\begin{center}
\begin{tikzpicture}[scale=0.5]

\draw [fill=lightgray] (0,0) circle (5cm);
\draw [fill=white] (0,0) circle (4cm);
\begin{scope}\clip (0,0) circle (3cm);
  \draw[xstep=0.1cm,ystep=10cm, rotate=73, xshift=-5cm, yshift=-5cm] (0,0) grid
(10,10); \end{scope}
  \draw (0,0) circle (3cm);
\draw [fill=white] (0,0) circle (2cm);
\begin{scope} \clip (0,0) circle (1cm);
  \draw[xstep=0.2cm,ystep=10cm, rotate=15, xshift=-5cm, yshift=-5cm] (0,0) grid
(10,10); \end{scope}
  \draw (0,0) circle (1cm);

\draw[thick] (70:2cm) -- (70:5.5cm);
\node[anchor=west] (a) at (70:4.5cm -| 6.6,0) {$(1)$ $g \succeq g_0$ on
$M-T^2$;}; \draw (70:4.5cm) -- (a);

\draw[thick] (41:4cm) -- (41:5.5cm);
\node[anchor=west] (a) at (41:4.5cm -| 6.6,0) {
$(2)$ $g = g_0$ on $M-T^4$;
};
\draw (41:4.5cm) -- (a);

\draw[thick] (41:2cm) -- (41:3cm);
\node[anchor=west] (b) at (41:2.5cm -| 6.6,0) {
$(3)$ $g \succeq \kappa h_1$ on $T^3- T^2$;
};
\draw (41:2.5cm) -- (b);

\draw[thick] (41:0cm) -- (41:1cm);
\node[anchor=west] (d) at (41:0.5cm -| 6.6,0 ) {
$(4)$ $g = h_1$ on $T^1$;
};
\draw (41:0.5cm) -- (d);

\draw[thick] (-40:3cm) -- (0,0);
\node[anchor=west] (c) at (-40:1.5cm -| 6.6,0) {
$(5)$ $g \succeq h_1$ on~$T^3$.
};
\draw (-40:1.5cm) -- (c);

\end{tikzpicture}
\end{center}

\end{lem}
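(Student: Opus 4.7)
My plan is to define $g$ by smoothly interpolating along the radial function $r$ between $h_1$ near the core loops $\si_s$ and $g_0$ far from them, with an additional multiplicative boost in the middle annulus $T^3 - T^2$ to absorb the factor $\kappa$. Since the tubular neighborhoods $T_s$ are pairwise disjoint and $r$ is globally defined on $T$, the construction is uniform in $s$ and reduces to choosing a single pair of one-variable cutoff functions.

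Concretely, I would choose smooth functions $a, b\colon [0,1) \to [0,\infty)$ with the following piecewise profile: $a \equiv 1$ and $b \equiv 0$ on $[0, 1/5]$; $a \equiv \kappa$ and $b \equiv 1$ on $[2/5, 3/5]$; $a \equiv 0$ and $b \equiv 1$ on $[4/5, 1)$; and smooth monotone interpolations on $[1/5, 2/5]$ and $[3/5, 4/5]$, arranged so that $a \geq 1$ throughout $[0, 3/5]$ and $b \geq 0$ everywhere. Then set
\[
g \;=\; \begin{cases} a(r)\, h_1 + b(r)\, g_0 & \text{on } T,\\ g_0 & \text{on } M - T. \end{cases}
\]
Since $a(r) = 0$ and $b(r) = 1$ on $\{r \geq 4/5\}$, the formula on $T$ agrees with $g_0$ on a neighborhood of the topological boundary of $T$, so the two pieces glue to a smooth symmetric $2$-tensor on all of $M$. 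Positive-definiteness is immediate: at each point, either $b(r) \geq 1$, whence $g \succeq g_0 \succ 0$, or one is inside $T^1$, where $g = h_1 \succ 0$.

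The five properties then follow by direct inspection, region by region. On $T^1$ one has $(a, b) = (1, 0)$, giving $g = h_1$, which is (4). On $M - T^4$ the formula gives $g = g_0$, which is (2). On $T^3 - T^2$ one has $a = \kappa$ and $b = 1$, so $g = \kappa h_1 + g_0 \succeq \kappa h_1$, which is (3). Throughout $T^3$ one has $a(r) \geq 1$, so $g \succeq a(r)\, h_1 \succeq h_1$, which is (5). Finally, on $M - T^2$ one has either $b(r) \geq 1$ (inside $T - T^2$) or $g = g_0$ (outside $T$), so $g \succeq g_0$, which is (1). The only point that merits real care is smoothness of $g$ along the core loops $\si_s$, where $r$ itself is only continuous; this is precisely what forces the requirement that $a$ and $b$ be \emph{locally constant} (not merely smooth) near $r = 0$, so that $a \circ r$ and $b \circ r$ are smooth on the zero section as well as on its complement.
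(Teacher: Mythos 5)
Your construction is correct and is essentially the paper's proof: the paper builds $g$ in two gluing steps (first $\hat g$ interpolating between $h_2 = g_0 + \kappa h_1$ and $g_0$ via a cutoff supported in $[3/5,4/5]$, then $g$ interpolating between $h_1$ and $\hat g$ via a cutoff supported in $[1/5,2/5]$), and expanding those formulas yields exactly your single expression $a(r)h_1 + b(r)g_0$ with $a = f_2 + (1-f_2)f_1\kappa$ and $b = 1-f_2$, which satisfies all the constraints you impose on $a$ and $b$; your observation that the cutoffs must be locally constant near $r=0$ is a valid point that the paper glosses over. The only slip is that your positive-definiteness dichotomy omits points of $T^2 - T^1$ (where $b$ may be $<1$ and one is outside $T^1$), but there $a \geq 1$ gives $g \succeq h_1 \succ 0$, so nothing is lost.
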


\begin{proof}[Proof of Lemma]
First consider the metric $h_2= g_0 + \kappa h_1$ on~$T$: it clearly satisfies
$h_2 \succeq g_0$ on $T$.  Now, let $f_1\colon\; [0,1] \to [0,1]$ be a smooth
function
with $f_1(t)=1$ for $t \leq 3/5$ and $f_1(t)=0$ for $t \geq 4/5$.
We define the metric $\hat g$ by gluing two metrics on open subsets of
$M$ that coincide on their intersection $T-T^4$:
\[
\hat g= \left\{ \begin{array}{ll}
            (f_1 \circ r)h_2 + (1- (f_1\circ r)) g_0 &\text{ on } T \\
            g_0 &\text{ on } M -T^4 \\
            \end{array}
            \right.
\]
On $M$ we have $\hat{g} \succeq g_0$, and on $T^3$ we have $\hat{g} = h_2
\succeq \kappa \cdot h_1 \succeq h_1$.

Now let $f_2\colon\; [0,1] \to [0,1]$ be a smooth function such that
$f_2(t) =1$ for $t \leq 1/5$ and $f_2(t) = 0$ for $t \geq 2/5$ and set
$$ g = \left\{ \begin{array}{ll}
            (f_2 \circ r)h_1 + (1- (f_2\circ r)) \hat{g} & \mbox{on $T$} \\
            \hat g & \mbox{on $M - T^2$}
            \end{array}
            \right.
$$
Then $g$ satisfies properties (1)--(5).
\end{proof}

\noindent
We claim that the metric $g$ produced above has the desired properties.

Indeed, let $x \neq 1 \in \pi_1(M)$, and suppose $c$ is a loop of shortest length in the
free homotopy class associated to $x$: $c$ will necessarily have length~$m_g(x)$.
Then we are in at least one of the following three cases.

{\bf Case A.} If $\Image(c) \subset T^3$,
then there is a unique $s\in S$ such that $\Image(c) \subseteq T_s^3$.
Then $c$ is freely homotopic within $T_s$ to $\sigma_s^k$ for some $k \neq 0
\in \Z$.  We have $g \succeq h_1$ on $T_s^3$, so $c$ has length
at least $|k|m(s)$ with respect to~$g$.

{\bf Case B.} If $\Image(c) \subset M - T^2$,
then we use that
$g \succeq g_0$ on $M - T^2$ to see that $c$ has length at least~$B$.

{\bf Case C.} If $\Image(c)$ contains points in $T^2$ and in $M- T^3$,
then the length of $c$ is at least the distance measured with metric $g$
between $T^2$ and $M- T^3$. But since $g\succeq \kappa h_1$ on $T^3-T^2$
the definition of $\kappa$ shows that this distance is at least~$B$.

To finish the proof, we first notice that $\si_s^k$ has length $|k|m(s)$ by
property (4) of the lemma.
Now, suppose $x \in \fund$, with associated loop $c$ (as above), is such that
the set  $\mathcal{S}_x =\{ |k|m(s) : s\in S, k\in \Z \text { with }\si_s^k
\text{
freely homotopic to }c \}$ is nonempty with infimum~$l_x$.
If this infimum is below $B$, then we must be in case A, and $l_x=m_g(x)$.
In particular, for $x\in S$ we know that $l_x$ is less than $B$ by
our assumption at the start of the proof, and $l_x=m(x)$ by the
assumption on $m$ in the Theorem, and it follows that $m(x)=m_g(x)$.
If the set $ \mathcal{S}_x$ is non-empty and its infimum is at least $B$, 
then $m_g(x)\ge B$ in all three cases.
If $ \mathcal{S}_x$ is empty, then we cannot be in case $A$ and we
also deduce that $m_g(x)\ge B$. This completes the proof of
Theorem~\ref{thm:SystoleLengthMap}.
\end{proof}

\begin{proof}[Proof of Theorem~\ref{thm:IntroSystole}] 
In the first part of Remark~\ref{rem:SystoleLengthMap} we established that
conditions $(i)$ and $(ii)$ in the hypotheses of
Theorem~\ref{thm:SystoleLengthMap} are equivalent to $m$ being the restriction
of a length map on $\fund$. Hence, we see that Theorem~\ref{thm:IntroSystole}
is just the first statement of Theorem~\ref{thm:SystoleLengthMap}.
\end{proof}

Before proving Theorem~\ref{thm:varyg} we recall that its moral is that we can
find a metric $g$ with a prescribed value and location for the first $k$
elements of the minimum marked length spectrum (see
Definition~\ref{def:MinLSpec}). Now, given distinct unoriented free homotopy
classes $c_1, c_2, \ldots , c_k$ and a minimum marked length map $m_g$, the
length map axioms imply certain relations between the values $l_i =
m_g(c_i)$.  Examining conditions $(ii)$ and $(1)$ in the statement of
Theorem~\ref{thm:SystoleLengthMap} and keeping the moral of
Theorem~\ref{thm:varyg} in mind we are led to make the following definition.  

\begin{dfn}\label{dfn:Admissible}
Let $M$ be a closed manifold and let $\mF(M)$ denote its collection of
unoriented free homotopy classes.  Given a finite sequence $\Fvar= (c_1, c_2,
\ldots , c_k)$ of distinct elements of $\mF(M)$ with $c_1$ the trivial class,
we will say that a sequence $l_1= 0 < l_2 \leq l_3 \leq \cdots \leq l_k$ is
\emph{$\Fvar$-admissible} if it satisfies the following conditions:
\begin{enumerate}
\item for $i, j = 2, \ldots, k$ we have $l_i \leq |n| l_j$ whenever $c_i =
c_j^n$ for some $n \in \Z$; 
\item for $i = 2, \ldots, k$ we have $l_i \geq \frac{1}{|n|}l_k$ whenever
$c_i^n \not\in \{\fvar_1,\ldots,\fvar_k\}$ for some non-zero $n\in \Z$.
\end{enumerate}
\end{dfn}

\begin{exa}[$\Fvar$-admissibility]\label{exa:Admissible}
One can quickly verify that a sequence $l_1= 0 < l_2 \leq l_3 \leq \cdots \leq
l_k$ such that $2 l_2 \geq l_k$ is $\Fvar$-admissible for any choice of
$\Fvar= (c_1, c_2, \ldots , c_k)$ in the definition above.
Also, we note that if $\{\fvar_1,\ldots,\fvar_n\}$ is a subset
that is closed under taking powers, 
the definition of $\Fvar$-admissibility reduces to condition $(1)$
(cf. Remark~\ref{rem:SystoleLengthMap}).
\end{exa}

\begin{proof}[Proof of Theorem~\ref{thm:varyg}]
To prove that $(5)$ implies $(6)$ 
first
recall from homotopy theory that 
two homotopy classes $x,y\in \fund$ give rise to the same
unoriented free homotopy class in $\mF(M)$
if and only if $x$ is conjugate to $y$ or to $y^{-1}$ in $\fund$. 
As a consequence, any minimum marked length map $m_g: \fund \to \nnR$ 
induces a map $m_g : \mF(M) \to \nnR$ called the minimum marked length
spectrum (cf. Definition~\ref{def:MinLSpec}).

Now, as in the statement of Theorem~\ref{thm:varyg}, let 
$\Fvar = (\fvar_1, \ldots , \fvar_k )$ be a sequence of
$k$ distinct unoriented free homotopy classes, where $c_1$ is trivial
and let $0=\lvar_1 < \lvar_2 \leq \cdots \leq \lvar_k$ be a
$\Fvar$-admissible sequence. 
For each $i = 1, \ldots , k$ choose $x_i \in \fund$ within the class $\fvar_i$.
Let $S = \{x_1, \ldots , x_k \} \subset \fund$ and
define $m\colon\;S \to \nnR$ by $m(x_j) = \lvar_j$.
Then it follows from condition $(1)$ of $\Fvar$-admissibility that $m$ satisfies 
the hypotheses of Theorem~\ref{thm:SystoleLengthMap}. 
Hence, choosing $B \geq \lvar_k = \max_{j} m(x_j)$ in Theorem~\ref{thm:SystoleLengthMap} 
it follows from condition $(2)$ of
$\Fvar$-admissibility that we obtain a metric $g$ with the desired properties.

To prove that $(6)$ implies $(5)$, 
let $g$ be a metric on $M$ such that
$l_1 = m_g(c_1) = 0 < l_2 = m_g(c_2) \leq l_3 = m_g(c_3) \leq \cdots \leq l_k =
m_g(c_k)$ and $m_g(c) \geq l_k$ for all $c \in \mF(M) -
\{\fvar_1,\ldots,\fvar_k\}$. Then an examination of the length map axioms
allows us to conclude that the sequence $l_1 = 0 < l_2 \leq \cdots \leq l_k$ is
$\Fvar$-admissible. 
\end{proof}

\begin{rem}
The statement and proofs of Theorems~\ref{thm:SystoleLengthMap} and
\ref{thm:varyg}  
also hold when $M$ is a closed manifold of dimension $2$ and the sequence of free
homotopy classes associated to the
elements of $S$ can be represented by disjoint simple closed loops in~$M$: a
condition that is always met in dimension $3$ and higher. It is worth noting
that Colin de Verdi\`{e}re's result \cite{CdV2} on prescribing the Laplace spectrum 
fails in dimension $2$ precisely because not every finite collection of free homotopy classes can be
represented by a collection of pairwise disjoint simple closed curves \cite[p.
415]{Berger}.
\end{rem}

We close this section by noting that in the case where $\pi_1(M)$ is finite, we
may take $S$ to be all of $\pi_1(M)$ in Theorem~\ref{thm:SystoleLengthMap} and
$m\colon\; S \to \nnR$ to be any length map in the sense of
Definition~\ref{dfn:lengthmap}. Then since every finitely generated group is
the fundamental group of some closed $4$-manifold \cite[Exercise
4.6.4(b)]{GoSt} we obtain the following result, which says that every length
map on a finite group is the minimum marked length map of some Riemannian
manifold.

\begin{cor}\label{cor:SystoleLengthMap}
Let $m\colon\; G \to \nnR$ be a length map on a finite group $G$.
Then there exists a Riemannian manifold $(M,g)$ and an isomorphism
$\phi\colon\;G\to \fund$ so that $m = m_g\circ \phi$.
\end{cor}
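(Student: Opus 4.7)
The plan is to assemble the corollary from the two ingredients highlighted in the paragraph preceding its statement: the fact from \cite[Exercise 4.6.4(b)]{GoSt} that every finitely generated group is the fundamental group of some closed $4$-manifold, and Theorem~\ref{thm:SystoleLengthMap}, which prescribes minimum marked lengths on any finite subset of $\pi_1(M)$ when $M$ is closed, connected, and of dimension at least three. Since $G$ is finite, it is finitely generated, so I first fix a closed connected $4$-manifold $M$ together with an isomorphism $\phi\colon\; G\to \pi_1(M)$; the dimension hypothesis of Theorem~\ref{thm:SystoleLengthMap} is automatic.

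Next, I transport $m$ along $\phi$, setting $\tilde m = m\circ \phi^{-1}\colon\; \pi_1(M)\to \nnR$, and I take $S=\pi_1(M)$, which is a \emph{finite} subset of $\pi_1(M)$ precisely because $G$ is finite. I then verify that $\tilde m$ satisfies hypotheses $(i)$ and $(ii)$ of Theorem~\ref{thm:SystoleLengthMap}: hypothesis $(i)$ is immediate from clause $(i)$ of Definition~\ref{dfn:lengthmap}, and hypothesis $(ii)$ follows by combining clauses $(ii)$ and $(iii)$ of that definition, since whenever $x$ is conjugate to $y^k$ we have $\tilde m(x)=\tilde m(y^k)\le |k|\tilde m(y)$ by conjugacy invariance and the power inequality.

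Finally, applying Theorem~\ref{thm:SystoleLengthMap} to $M$, $S$, and $\tilde m$ (with $B$ chosen larger than $\max_{s\in S}\tilde m(s)$, though the first conclusion of the theorem alone suffices) yields a Riemannian metric $g$ on $M$ satisfying $m_g(x)=\tilde m(x)$ for every $x\in S=\pi_1(M)$, which is exactly the identity $m_g\circ\phi=m$ required. There is no real obstacle here: the corollary is in essence a finiteness packaging of Theorem~\ref{thm:SystoleLengthMap}, and the only points worth flagging are that the $4$-manifold produced by the realization result can be taken connected (standard, since connected components contribute fundamental groups independently) and that the finiteness of $G$ is exactly what lets us take $S$ to be the entire group.
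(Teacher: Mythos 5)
Your proof is correct and follows exactly the paper's argument: realize the finite group $G$ as the fundamental group of a closed connected $4$-manifold, take $S=\pi_1(M)$ (finite), check hypotheses $(i)$ and $(ii)$ of Theorem~\ref{thm:SystoleLengthMap} from the length map axioms, and apply that theorem. The extra detail you supply in verifying hypothesis $(ii)$ from clauses $(ii)$ and $(iii)$ of Definition~\ref{dfn:lengthmap} is exactly what the paper leaves implicit.
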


%%%%%%%%%%%%%%%%%%%%%%%%%%%%%%%%%%%%%%%%%%%
%%%%%%   A group theoretic criterion for equality of covering spectra %%%%%%
%%%%%%%%%%%%%%%%%%%%%%%%%%%%%%%%%%%%%%%%%%%

\section{A group theoretic criterion for equality of covering
spectra}\label{Sec:Proof}

The aim of this section is to establish Theorem~\ref{thm:CovSunada}.
First, we identify how the covering spectrum of the total space
of a Riemannian covering can be computed in terms of the base space.

Let $p\colon\;(M,g)\to (N,h)$ be a Riemannian 
covering; that is, $p$ is a covering map that is also a local isometry.
Fixing a base point $m_0$ of $M$ and $p(m_0)$ of $N$, we
have an injective group homomorphism $p_{\#} \colon\;\pi_1(M) \to \pi_1(N)$
given by $[\sigma] \mapsto [p \circ \sigma]$.
Since $p$ is a local isometry, the lengths of $\si$ and $p\circ\sigma$ are the same, so we have
the following commutative diagram:

\begin{center}
\begin{tikzpicture}
\node (N) at (-1.7,2) {$\pi_1(M)$};
\node (NN) at (1.7,2) {$\pi_1(N)$};
\node (R) at (0,0.3) {$\nnR$};
\draw [->] (N) -- node[above] {$p_\#$} (NN);
\draw [->] (NN) -- node[below right] {$m_{h}$}(R);
\draw [->] (N) -- node[below left] {$m_{g}$}(R);
\end{tikzpicture}
\end{center}

\noindent
From this the following observation is immediate.

\begin{lem}
\label{lem:restrict}
$\CovSpec(M)=\frac{1}{2}
\Jump(\Fil^\bullet_{m_{h}}\Image(p_\#))$.
\end{lem}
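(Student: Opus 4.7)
The plan is to reduce the statement to Proposition~\ref{prop:CovJump} using the commutative triangle displayed just above the lemma. That triangle records the identity $m_g = m_h \circ p_\#$, and the map $p_\#\colon \pi_1(M)\to \pi_1(N)$ is injective. So I would first invoke Proposition~\ref{prop:CovJump} to write $\CovSpec(M,g) = \frac{1}{2}\Jump(\Fil^\bullet_{m_g}\pi_1(M))$, and then argue that $p_\#$ transports this filtration isomorphically onto the filtration $\Fil^\bullet_{m_h}\Image(p_\#)$, so that the two jump sets coincide as subsets of $\R_{>0}$.

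The key verification is the identity of subgroups $p_\#\bigl(\Fil^\delta_{m_g}\pi_1(M)\bigr) = \Fil^\delta_{m_h}\Image(p_\#)$ for every $\delta>0$. Since $p_\#$ is an injective homomorphism, it sends the generating set $\{\gamma \in \pi_1(M)\colon m_g(\gamma)<\delta\}$ of the left side bijectively onto $\{p_\#(\gamma)\colon \gamma \in \pi_1(M),\ m_g(\gamma)<\delta\}$. Using $m_g = m_h \circ p_\#$ together with the fact that $p_\#$ is a bijection onto its image, this latter set coincides with $\{g \in \Image(p_\#) \colon m_h(g)<\delta\}$, which is precisely the generating set of the right side. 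Hence $p_\#$ induces an inclusion-preserving bijection between the two filtrations indexed by $\delta$, and in particular their jump sets agree.

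Combining the two observations yields the desired formula $\CovSpec(M)=\frac{1}{2}\Jump(\Fil^\bullet_{m_h}\Image(p_\#))$. There is no real obstacle: everything reduces to straightforward bookkeeping with sublevel-set-generated subgroups once the commutative triangle is in hand. The only point worth flagging is that the restriction of $m_h$ to the subgroup $\Image(p_\#)\le \pi_1(N)$ is itself a length map in the sense of Definition~\ref{dfn:lengthmap}, which is immediate from the axioms, so that the filtration $\Fil^\bullet_{m_h}\Image(p_\#)$ on the right side is well defined and the notation is consistent with Definition~\ref{def:IntroJumpSet}.
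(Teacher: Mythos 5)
Your proof is correct and follows exactly the route the paper intends: the paper declares the lemma ``immediate'' from the commutative triangle $m_g=m_h\circ p_\#$ together with Proposition~\ref{prop:CovJump}, and your verification that the injective homomorphism $p_\#$ matches the generating sets $\{\gamma: m_g(\gamma)<\delta\}$ and $\{y\in\Image(p_\#): m_h(y)<\delta\}$, hence the filtrations and their jump sets, is precisely the bookkeeping being elided. Your closing remark that one must restrict the length map $m_h$ to $\Image(p_\#)$ (rather than restrict the filtration of $\pi_1(N)$) is exactly the caveat the paper itself flags in the remark following the lemma.
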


\noindent
Hence, if $p: (M , g) \to (N, h)$ and $p: (M' , g') \to (N, h)$ are Riemannian
coverings of a common base space $(N,h)$, then $\CovSpec(M, g) = \CovSpec(M',
g')$ if and only if $\Jump(\Fil^\bullet_{m_{h}}H) = \Jump(\Fil^\bullet_{m_{h}}
H')$, where $H = \Image(p_\#)$ and $H' = \Image(p'_\#)$ are subgroups of $G =
\pi_1(N)$.

\begin{rem}
A word of warning is in order here. Suppose that $m$ is a length map on a
group $G$ and $H$ is a subgroup of $G$, then the restriction of
$m$ to $H$ is a length map on $H$ that gives rise to the filtration
$\Fil^\bullet_mH$. Note however that
$\Fil_{m}^\delta H $ may not be the same as $(\Fil_m^\delta G)\cap H$.
The first group consists of finite products of elements of length below
$\delta$ where all elements are in $H$, and the second group consists of such
products where the elements are in $G$ and only the product is required to be
in~$H$. Thus, to obtain the correct covering spectrum of $M$ out of the length
map $m_h$ on $N$ one has to first restrict $m_h$ to $\Image(p_\#)$, rather than
restricting the filtration of $\pi_1(N)$ to $\Image(p_\#)$.
\end{rem}

In order to prepare for the proof of Theorem \ref{thm:CovSunada}, we
first give an interpretation of condition $(3)$
of the theorem in terms of length maps.

\begin{lem}\label{lemma:jumptriple}
Let $G$ be a finite group, and let $H$ and $H'$ be subgroups.
Then the following are equivalent:
\begin{enumerate}
\item
for all subsets $S \subset  T \subset G$ that are
stable under conjugation by elements of $G$ we have
\[
\langle H\cap S\rangle = \langle H\cap T\rangle
\iff
\langle H'\cap S\rangle = \langle H'\cap T\rangle;
\]
\item{}
for every length map $m$ on $G$ we have
$$\Jump(\Fil^\bullet_{m}H)=
\Jump(\Fil^\bullet_{m}H').$$
\end{enumerate}
\end{lem}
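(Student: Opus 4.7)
My plan is to exploit the duality between length maps on $G$ and conjugation-invariant subsets of $G$. For any length map $m$, each sublevel set $S_\delta := \{g \in G : m(g) < \delta\}$ is conjugation-invariant by axiom $(ii)$ of Definition~\ref{dfn:lengthmap}, and by definition the filtration unpacks as $\Fil_m^\delta K = \langle K \cap S_\delta\rangle$ for any subgroup $K \leq G$.

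The implication $(1) \Rightarrow (2)$ is then short. Given a length map $m$ on $G$ and $\delta < \epsilon$ in $\pR$, the sublevel sets $S_\delta \subset S_\epsilon$ are both conjugation-invariant, so hypothesis $(1)$ applied to this pair yields
\[
\Fil_m^\delta H = \Fil_m^\epsilon H \iff \Fil_m^\delta H' = \Fil_m^\epsilon H'.
\]
Since $\delta \notin \Jump(\Fil_m^\bullet K)$ means exactly that some $\epsilon > \delta$ realizes $\Fil_m^\delta K = \Fil_m^\epsilon K$, the jump sets on $H$ and $H'$ agree.

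For the converse I argue the contrapositive. If $(1)$ fails, then after possibly exchanging $H$ and $H'$ I may fix conjugation-invariant $S \subset T$ with $\langle H \cap S\rangle = \langle H \cap T\rangle$ but $\langle H' \cap S\rangle \ne \langle H' \cap T\rangle$. Because $\langle K \cap X\rangle = \langle K \cap (X \cup X^{-1})\rangle$ for any subgroup $K$ (subgroups being inverse-closed), I may replace $S$ and $T$ by their unions with their inverse sets to assume they are also inverse-closed. I then manufacture a tailor-made length map $m$ on $G$ by fixing an integer $N \ge 2$ and declaring $m(1) = 0$, $m(g) = 1$ for $g \in S - \{1\}$, $m(g) = 1 + 1/N$ for $g \in T - S$, and $m(g) = 1 + 2/N$ for $g \notin T$. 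These values are conjugation-invariant by construction, and the power inequality $m(g^k) \le |k|m(g)$ reduces to two easy cases: $|k| = 1$ follows from inverse-closedness, which forces $m(g^{-1}) = m(g)$, and $|k| \ge 2$ follows from the crude bound $m \le 1 + 2/N \le 2$. A brief inspection shows $\Fil_m^\delta K$ assumes in turn the four values $\{1\}$, $\langle K \cap S\rangle$, $\langle K \cap T\rangle$, $K$ as $\delta$ sweeps past $1, 1+1/N, 1+2/N$; consequently $\delta = 1 + 1/N$ lies in $\Jump(\Fil_m^\bullet H')$ but not in $\Jump(\Fil_m^\bullet H)$, contradicting $(2)$.

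The step I expect to be most delicate is the construction of the length map from the raw data $(S, T)$: the power inequality can in principle couple the values of $m$ across different cyclic subgroups, and no hypothesis has been assumed about how $S$ and $T$ interact with the operation $g \mapsto g^k$. The trick that defuses this is compressing all the nonzero values of $m$ into the narrow window $[1, 2]$, which makes the inequality $m(g^k) \le |k|m(g)$ automatic for $|k| \ge 2$; the remaining case $|k| = 1$ is handled by the harmless enlargement to inverse-closed $S, T$.
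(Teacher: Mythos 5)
Your proof is correct and follows essentially the same route as the paper's: the forward direction via the conjugation-stable sublevel sets $S_\delta$, and the converse via a step-function length map whose nonzero values are squeezed into an interval $[a,2a]$ so that axiom $(iii)$ is automatic for $|k|\ge 2$ and reduces to inverse-closedness for $|k|=1$ (the paper uses the values $2,3,4$ where you use $1,\ 1+1/N,\ 1+2/N$, a purely cosmetic difference).
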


\begin{rem}\label{rem:range}
There is some freedom in choosing the range of conjugacy stable subsets of $G$
that $S$ and $T$ vary over in condition~$(1)$. For instance, one obtains an
equivalent condition when one lets $S$ and $T$ vary over all conjugacy stable
subsets of~$G$, so that condition $(1)$ in the lemma is equivalent to condition
$(3)$ of Theorem \ref{thm:CovSunada}.
\end{rem}

\begin{dfn}\label{dfn:jump}
We will say that $H$ and $H'$ are \emph{jump equivalent} subgroups of $G$ or
that $(G,H,H')$ is a \emph{jump triple}, if the conditions in the lemma above
are met. 
\end{dfn}

\noindent
In the next section we will see how the notion of jump equivalence
is related to other equivalence relations of subgroups.

\begin{proof}[Proof of Lemma]
Suppose that $(1)$ holds and that $m$ is a length map on~$G$.
For every $\delta>0$ the subset $S_{\delta}=\{g\in G: m(g)<\delta\}$ is stable
under conjugation.
We have $\{h \in H\colon\; m(h) < \delta\}= H\cap S_\delta$, so
$$
\delta\in \Jump(\Fil^\bullet_{m}H) \iff
\langle H\cap S_\delta \rangle \ne
\langle H\cap S_{\eps} \rangle \textrm{ for all }\eps > \delta.
$$
By condition $(1)$ we see that this is equivalent to the same statement for $H'$,
and $(2)$ follows.

To show that  $(2)$ implies $(1)$ suppose that $S, T \subset G$ are conjugacy
stable subsets of $G$, and that $S\subset T$.  We will construct a length map
$m\colon\; G \to \R_{\ge 0}$ that depends on $S$ and $T$ so that condition
$(2)$ for this length map implies that the equivalence in $(1)$ holds.

We may assume that $S$ and $T$ are closed under taking inverses.
Define $m$ by
$$
m(g) = \left\{ \begin{array}{ll}
            0 & \mbox{if $g =1$;} \\
            2 & \mbox{if $g \in S - \{ 1 \}$;} \\
            3 & \mbox{if $g \in T - (S \cup \{1\})$;}\\
            4 & \mbox{otherwise.}
\end{array}
\right.
$$
Then $m$ satisfies the length map axioms of Definition \ref{dfn:lengthmap}, and
we have $(H \cap S) \cup \{1\} =\{h\in H\colon\; m(h) <3\}$,
and $(H \cap T) \cup \{1\} =\{h\in H\colon\; m(h) <4\}$.
It follows that
$$
\langle H \cap S \rangle = \langle H \cap T \rangle \iff
\Fil^3_{m}H= \Fil^4_{m}H \iff
3 \not\in \Jump(\Fil^\bullet_{m}H).
$$
The same holds when we replace $H$ by $H'$, so indeed condition $(2)$ for $m$
implies the equivalence in $(1)$.
\end{proof}

\noindent
With these preliminaries out of the way we now turn to the proof of
Theorem~\ref{thm:CovSunada}.

\begin{proof}[Proof of Theorem \ref{thm:CovSunada}]
First, we notice that since $G$ acts freely on $M$ we see that the natural
projection $q\colon\; M \to G\backslash M$ is a regular covering with $G$ serving as
the group of deck transformations. 
Let us choose a base point in $M$ for the fundamental group $\fund$, which
also determines a base point in the quotient of $M$ by any subgroup of $G$.
By \cite[Cor. 2.6.3]{Spanier} there is a surjective
homomorphism $\phi\colon\; \pi_1(G \backslash M) \to G$ with $\ker(\phi) = q_{\#}
(\pi_1(M))$.
Hence, $\phi$ is an isomorphism if and
only if $M$ is simply-connected. We also note that, letting $p\colon\; H \backslash M
\to G\backslash M$ and $p'\colon\; H' \backslash M \to G \backslash M$ denote the
covering maps, we have
$p_{\#}(\pi_1(H \backslash M)) = \phi^{-1}(H)$ and
$p'_{\#}(\pi_1(H' \backslash M)) = \phi^{-1}(H')$.

We now prove that if $M$ is simply-connected, then $(3)$ implies $(4)$. Assume
that the triple $(G, H, H')$ satisfies condition $(3)$. Then, since $\phi \colon\;
\pi_1(G\backslash M) \to G$ is an isomorphism, we see that condition $(3)$
holds for the triple $(\pi_1(G \backslash M), p_{\#}(\pi_1(H \backslash M)),
p'_{\#}(\pi_1(H' \backslash M)))$. The result now follows by applying Lemma
\ref{lemma:jumptriple} and Lemma \ref{lem:restrict} to the coverings $p\colon\; M\to
H\backslash M$ and $p'\colon\;M\to H'\backslash M$.

We now assume that $M$ is a manifold of dimension at least $3$ and prove that
in this case condition $(4)$ implies condition $(3)$.

Assuming condition $(3)$ does not hold, our goal will be
to construct a Riemannian metric on $G\backslash M$ such that the
length map attains certain prescribed values on a suitably chosen finite set of
conjugacy classes of $\pi_1(G\backslash M)$. Pulling this metric back to $M$
will then give a $G$-invariant
Riemannian metric on $M$ so that the quotient manifolds $H\backslash M$ and
$H'\backslash M$ have distinct covering spectra.

Note first that $\pi_1(G\backslash M)$ is finitely generated, and that
$G$ is finite, so $\ker \phi$ is generated as a group by a finite
subset~$K$ of $\pi_1(G\backslash M)-\{1\}$; see e.g. \cite[Cor.~7.2.1]{Hall}.
By the assumption that condition $(3)$ does not hold, one sees that there are
subsets $S$ and $T$ of $G-\{1\}$ both stable under conjugation by elements of
$G$, and taking inverses, so that we have $S\subset T$, and $\langle H\cap
S\rangle =\langle H \cap T\rangle$,  but $\langle H'\cap S\rangle  \ne \langle
H' \cap T\rangle$ (switch $H$ and $H'$ if necessary).
Now choose any lift $T'$ of $T$ to $\pi_1(G\backslash M)$, i.e., any subset
$T'\subset \pi_1(G\backslash M)$ so that $\phi$ maps $T'$ bijectively to~$T$.
Let $S'=\{t\in T'\colon\; \phi(t)\in S\}$, which is a lift of~$S$.

We will prescribe the length map on the finite subset $K\cup T'$ of
$\pi_1(G\backslash M)$. Note that this union is a disjoint union.
For any subset $X$ of $\pi_1(G\backslash M)$ let us write $c(X)=\{\gamma
y\gamma ^{-1}\colon\; \gamma \in \pi_1(G\backslash M),\; y \in X\}$ for its
closure under conjugation.
Since we assumed that the dimension of $M$ is at least $3$,
Theorem~\ref{thm:SystoleLengthMap} now implies that there is a Riemannian
metric $g$ on $G\backslash M$ such that
the associated minimum marked length map $m_g\colon\; \pi_1(G\backslash M)\to
\nnR$ satisfies \[
m_g(\gamma)\;
\left\{
\begin{array}{ll}
= 0 &\text{ if  $\gamma=1$;} \\
= 3 & \text{ if $\gamma\in c(K\cup S')$;} \\
= 4 & \text{ if $\gamma\in c(T'-S')$.} \\
> 5 & \text{ otherwise}
\end{array}
\right.
\]
For any conjugacy stable $X \subset \pi_1(G\backslash M)$ that contains $K$
the group $\langle X\cap \phi^{-1}(H)\rangle$ contains
$\ker \phi=\langle K \rangle$, and its image under $\phi$ is
$\langle \phi(X) \cap H\rangle$, so we have
\[
\langle X\cap \phi^{-1}(H)\rangle = \phi^{-1}(\langle \phi(X) \cap H \rangle).
\]
Since $\langle H \cap S \rangle =\langle H \cap T\rangle$,
we see that for every $\delta$ with $4<\delta<5$ we have
\begin{align*}
\Fil^4_{m_g}\phi^{-1}(H)
&= \langle c(K\cup S') \cap \phi^{-1}(H)\rangle = \phi^{-1}(\langle S\cap H
\rangle) \\ &= \phi^{-1}(\langle T\cap H\rangle) = \langle c(K\cup T') \cap
\phi^{-1}(H)\rangle
=\Fil^\delta_{m_g}\phi^{-1}(H),
\end{align*}
and we conclude that $4$ is not in $\Jump(\Fil_{m_g}^{\bullet} \phi^{-1}(H))$.
On the other hand, since $\langle H'\cap S\rangle  \ne \langle H'\cap
T\rangle$, we have for each $4 < \delta <5$ \[
\Fil^4_{m_g}\phi^{-1}(H') = \phi^{-1}(\langle S\cap H'\rangle) \ne
\phi^{-1}(\langle T\cap H'\rangle ) = \Fil^\delta_{m_g}\phi^{-1}(H').
\]
Therefore, $4$ is an element of $\Jump(\Fil_{m_g}^{\bullet} \phi^{-1}(H'))$.
Now, since we have $p_{\#}(\pi_1(H \backslash M)) = \phi^{-1}(H)$ and
$p'_{\#}(\pi_1(H' \backslash M)) = \phi^{-1}(H')$, it follows from
Lemma \ref{lem:restrict} that $2$ lies in the covering spectrum 
of $H'\backslash M$ but not in the covering spectrum of $H\backslash M$.
This completes the proof of Theorem \ref{thm:CovSunada}.
\end{proof}

%%%%%%%%%%%%%%%%%%%%%%%%%%%%%%%%%%%%%%%%%%%%
%%%%%%%%   Jump equivalence and Gassmann-Sunada equivalence %%%%%
%%%%%%%%%%%%%%%%%%%%%%%%%%%%%%%%%%%%%%%%%%%%

\section{Jump equivalence and Gassmann-Sunada equivalence}\label{Sec:Groups}

Suppose that $G$ is a finite group, and let $H$ and $H'$ be subgroups.  In this
entirely group theoretic section we study the notion of jump equivalence of
$H$ and $H'$, as defined in Definition \ref{dfn:jump}.

Like Gassmann-Sunada equivalence, jump equivalence is a strong form of
the group theoretic notion corresponding to Kronecker equivalence of
number fields \cite{Jehne}.
Many instances of Gassmann-Sunada equivalence, such as the Komatsu triples
considered in \cite{SW}, satisfy a stronger condition, which we call
\emph{order equivalence}. The definitions and relations of these four
equivalences are given in Figure~\ref{fig:SubgroupEquiv}, where the stated conditions are to
hold for all subsets $S$, $T$ of $G$ that are closed under conjugation by
elements of~$G$.
It is not hard to show the implications in the diagram---we leave this
to the reader. The main purpose of this section is to provide examples
of Gassmann-Sunada equivalent subgroups that are not jump equivalent,
which in the next section give rise to isospectral manifolds with distinct
covering spectra.  In fact, we will see that there are no relations between the
four equivalences that hold for all $(G,H,H')$ other than the ones implied by
the diagram. We start with a few basic remarks.

\bigskip
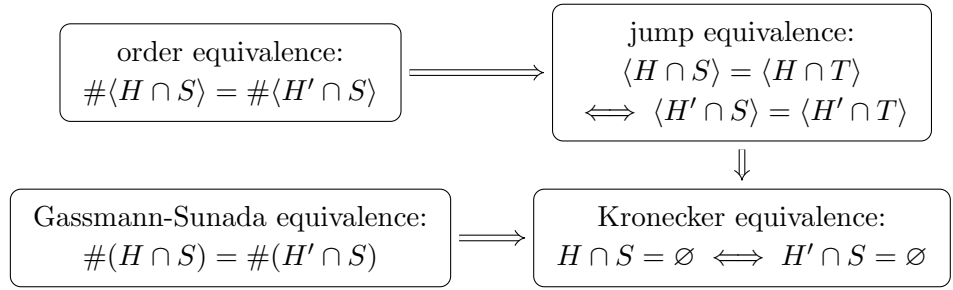
\begin{figure}%[h]
\begin{center}
\begin{tikzpicture}
\path
(-7,2.2) node[rounded corners, draw, rectangle] (OE) {
\begin{tabular}{c}
order equivalence:\\
$\#\langle H \cap S\rangle = \#\langle H' \cap S\rangle$
\end{tabular}
}
(0,2.2) node[rounded corners, draw, rectangle] (JE) {
\begin{tabular}{c}
jump equivalence:\\
$\langle H \cap S\rangle =
\langle H \cap T\rangle  \\ \iff
\langle H' \cap S\rangle =
\langle H' \cap T\rangle $\\
\end{tabular}
}
(0,0) node[rounded corners, draw, rectangle] (KE) {
\begin{tabular}{c}
Kronecker equivalence:\\
$H \cap S =\emptyset \iff H' \cap S =\emptyset $\\
\end{tabular}
}
(-4,0) node[rounded corners, draw, rectangle, anchor=east] (GE) {
\begin{tabular}{c}
Gassmann-Sunada equivalence:\\
$\# (H \cap S) =
\# (H' \cap S)$\\
\end{tabular}
};
\draw[-implies,double,double equal sign distance,shorten >= 3pt,shorten <= 3pt]
(OE)--(JE);
\draw[-implies,double,double equal sign distance,shorten >= 3pt,shorten <= 3pt]
(JE)--(KE);
\draw[-implies,double,double equal sign distance,shorten >= 3pt,shorten <= 3pt]
(GE)--(KE);
\end{tikzpicture}
\end{center}
\caption{Notions of equivalence}\label{fig:SubgroupEquiv}
\end{figure}

\begin{rem}
One obtains an equivalent condition for jump equivalence by restricting the
condition to the case where $S\subset T$, or even to the case where $T$ is the
union of $S$ and a single conjugacy class of~$G$.  
For Gassmann-Sunada equivalence and Kronecker equivalence, but not for order
equivalence, one may restrict to subsets $S$ consisting of a single conjugacy
class.
\end{rem}

\begin{rem}[Preserving the index]\label{rem:PresIndex}
If $H$ and $H'$ are order equivalent or Gassmann-Sunada equivalent, then
$[G:H]=[G:H']$. Thus, condition (1) in Theorem \ref{thm:Sunada}
implies that $H\backslash M$ and $H'\backslash M$ are coverings of the same
degree of $G\backslash M$. For Kronecker and jump equivalence
this is not necessarily so.  The easiest example is the jump triple
$(A_4,V_4,C_2)$: the alternating group $A_4$ on four letters,
its unique subgroup $V_4$ of order $4$ and any subgroup $C_2$ of order~$2$.
\end{rem}

\begin{rem}[Reduced triples]\label{rem:ReducedTriples}
Another property that holds for Gassmann-Sunada equivalence, but not
for jump equivalence is the following. Suppose that $N$ is a normal subgroup of
$G$ that is contained in $H\cap H'$. If the only such $N$ is the trivial group,
then we say that the triple $(G,H,H')$ is \emph{reduced}.
It is easy to see that $(G,H,H')$ is a Gassmann-Sunada
triple if and only if $(G/N,H/N,H'/N)$ is a reduced Gassmann-Sunada triple,
where $N \leq H \cap H'$ is a maximal normal subgroup.
The same is true for Kronecker equivalence.
For jump equivalence the situation is different. If $(G,H,H')$ is a jump
triple, then so is $(G/N,H/N,H'/N)$, but the converse may not hold.
For instance, 
multiplying all groups in the example in the previous remark by $C_2$,
the reader may check that $(C_2\times A_4, C_2\times V_4, C_2\times C_2)$ is not
a jump triple, while dividing out $C_2\times\{1\}$ gives the jump triple
$(A_4,V_4,C_2)$. We will see more examples of this below.
\end{rem}

\begin{exa}[Small index]\label{exa:small}
For many Gassmann-Sunada triples $(G,H,H')$ there is an automorphism $\al$
of $G$ such that $H'=\alpha(H)$ and $\alpha(g)$ is conjugate to $g$ for each $g
\in G$. For, instance this is the case for all 19 reduced Gassmann-Sunada
triples $(G,H,H')$ with $[G:H]< 16$; see \cite[Thm.~3]{Bosma}.
Such subgroups, $H$ and $H'$, are order equivalent and therefore also jump equivalent in $G$.
\end{exa}

\begin{exa}[Linear groups]
One way to obtain Gassmann-Sunada triples is the following.
Let $k$ be a finite field of $q$ elements and let $V$ be a vector space of
dimension~$d$.  Let $G=\Gl(V)$, let $H$ be the stabilizer in $G$
of a non-zero element of $V$, and let $H'$ be the stabilizer of a non-zero
vector in the dual space $\Hom_k(V,k)$. Then $H$ and $H'$ are Gassmann-Sunada
equivalent subgroups of
$\Gl(V)$ and $H$ is not conjugate to $H'$ if $d\ge 2$ and $(q,d)\ne(2,2)$.
Again, there is an automorphism $\alpha$ as in the previous remark, so $H$ and
$H'$ are order equivalent and also jump equivalent in $\Gl(V)$.
\end{exa}

One can obtain a Gassmann-Sunada triple that is not a jump triple out of this
example by enlarging the linear group to the group $V\rtimes G$, which is the
group
of affine linear transformations of~$V$. Then $V$ is a normal subgroup of
$V\rtimes G$, and $(V\rtimes G, V\rtimes H, V\rtimes H')$ is a Gassmann-Sunada triple
that is not reduced if $d>0$.

\begin{prop}
If $d\ge 2$ and $(q,d)\ne(2,2)$, then $(V\rtimes G, V\rtimes H , V\rtimes H' )$
is a Gassmann-Sunada triple that is not a jump triple.
\end{prop}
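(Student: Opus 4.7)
My plan is to establish Gassmann--Sunada by pullback from $(G,H,H')$ and then to exhibit explicit conjugation-stable subsets $S\subsetneq T$ of $V\rtimes G$ witnessing the failure of jump equivalence. For the Gassmann--Sunada part, the projection $\pi\colon V\rtimes G\to G$ yields a $(V\rtimes G)$-equivariant bijection $(V\rtimes G)/(V\rtimes H)\cong G/H$ with $V$ acting trivially through $\pi$, and similarly for $H'$; hence the permutation character of the former is the pullback along $\pi$ of that of the latter, and Gassmann--Sunada of $(G,H,H')$ lifts immediately.

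The bulk of the work is the failure of jump equivalence. First I would compute that the $(V\rtimes G)$-conjugacy class of $(0,h)$, for $h\in G$, equals
\[
C_h=\{(u,h')\colon h'\sim_G h,\ u\in\Image(1-h')\},
\]
and for any subgroup $K\le G$ I would set $W_K(h)=\sum_{h'\in K,\ h'\sim_G h}\Image(1-h')\subseteq V$. The identity $(u_1,h')(u_2,h')^{-1}=(u_1-u_2,1)$ shows each $\Image(1-h')$ lies in $\langle(V\rtimes K)\cap C_h\rangle\cap V$, so $W_K(h)\subseteq\langle(V\rtimes K)\cap C_h\rangle\cap V$. For the reverse inclusion, I would verify that $W_K(h)$ is stable under $K'':=\langle h'\in K\colon h'\sim_G h\rangle$, whence $W_K(h)\rtimes K''$ is a subgroup of $V\rtimes K$ containing every generator of $\langle(V\rtimes K)\cap C_h\rangle$. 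This last step --- confirming that iterated products in the semidirect product produce no translations outside $W_K(h)$ --- is the principal obstacle.

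With the equality $\langle(V\rtimes K)\cap C_h\rangle\cap V=W_K(h)$ in hand, the crux is to choose $h\in H$ with $W_H(h)=V$ but $W_{H'}(h)\subsetneq V$. For $d\ge 3$, I would take $h=I+v_0 w^T$ a transvection in $H$: the $G$-conjugate transvections $I+v'w'^T$ lying in $H$ satisfy $w'(v_0)=w'(v')=0$, which (using the third dimension) is solvable for every nonzero $v'\in V$, giving $W_H(h)=V$; the analogous conjugates in $H'$ satisfy $\phi_0(v')=0$, so $W_{H'}(h)\subseteq\ker\phi_0$. When $d=2$, the hypothesis $(q,d)\ne(2,2)$ forces $q\ge 3$, so I would instead take $h$ diagonalizable with eigenvalues $1$ and $a$ for some $a\in k-\{0,1\}$; its $H$-conjugates have $a$-eigenline any line other than $kv_0$, whose union spans $V$, whereas each $H'$-conjugate must have its $a$-eigenline inside the hyperplane $\ker\phi_0$.

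Finally I would set $S=C_h$ and $T=C_h\cup((V-\{0\})\times\{1\})$, a union of two $(V\rtimes G)$-conjugacy classes. Since $V\subseteq\langle(V\rtimes H)\cap S\rangle$ by choice of $h$, adjoining the translation class has no effect: $\langle(V\rtimes H)\cap S\rangle=\langle(V\rtimes H)\cap T\rangle$. On the other hand, $\langle(V\rtimes H')\cap S\rangle\cap V=W_{H'}(h)$ is a proper subspace of $V$, while $\langle(V\rtimes H')\cap T\rangle\supseteq V$, so these two subgroups are distinct. This exhibits the required asymmetry and shows that $(V\rtimes G,V\rtimes H,V\rtimes H')$ is not a jump triple.
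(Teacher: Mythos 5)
Your argument is correct and follows essentially the same route as the paper: the paper's key identity $\langle S\cap(V\rtimes H)\rangle=(I(H)\cdot V)\rtimes H$, with $I(H)\cdot V=\sum_{h\in H}\Image(1-h)$, is the same computation as your lemma $\langle C_h\cap(V\rtimes K)\rangle\cap V=W_K(h)$, and both proofs turn on the identical dichotomy that the images $\Image(1-h')$ span $V$ as $h'$ runs over (conjugates in) $H$ but lie in $\ker\phi_0$ for $h'\in H'$ because $H'$ acts trivially on $V/\ker\phi_0$. The only cosmetic differences are your choice of witnesses --- a single conjugacy class of transvections (resp.\ semisimple elements when $d=2$) with $T$ obtained by adjoining the translation class, versus the paper's $S=$ all elements conjugate into $\{0\}\rtimes H$ and $T=V\rtimes G$, with a regular unipotent element in place of your transvections.
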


\begin{proof}
Let $S$ be the subset of those elements of $V\rtimes G$ that have a conjugate
in $\{0\}\rtimes H$. Since $H$ and $H'$ are Gassmann-Sunada equivalent in $G$ this
is also the subset of elements of $V\rtimes G$ that have a conjugate in
$\{0\}\rtimes H'$. We will show that
$
\langle S\cap (V\rtimes H)\rangle = V\rtimes H \text{ and }
\langle S\cap (V\rtimes H')\rangle \ne V\rtimes H'.
$
By taking $T=V\rtimes G$ in the defining property of jump equivalence
the Proposition will then follow.

Note that the vector space $V$ is a left module over the
group ring~$\Z[H]$. The augmentation ideal $I(H)$ in this ring is
the ideal generated by all elements $1-h$ with $h\in H$.
We now claim that
\[
\langle S \cap (V\rtimes H)\rangle = (I(H)\cdot V) \rtimes H.
\]
To see ``$\subset$'' note that any element of $S \cap (V\rtimes H)$ is of the
form
\[
(v,\gamma)(0,h)(v,\gamma)^{-1}=((1-\gamma h \gamma^{-1})v, \gamma h
\gamma^{-1}) \]
with $h\in H$, $\gamma\in G$ and $v\in V$ satisfying $\gamma h \gamma^{-1}\in
H$, and therefore $1-\gamma h \gamma^{-1}\in I(H)$.
For the other inclusion, note first that $\{0\}\times H \subset S \cap
(V\rtimes H)$.  By taking $\gamma=1$ in the identity above, one sees that for
every $h\in H$ and $v\in V$ we have $((1-h)v,h) \in S \cap (V\rtimes H)$, and
therefore $((1-h)v,1) \in S \cap (V\rtimes H)$. One then deduces ``$\supset$''.
This shows the claim, which by the same argument also holds with $H$
replaced by~$H'$. To finish the proof, it remains to show that $I(H)\cdot V= V$ and $I(H')\cdot V \ne
V$.  

To see that $I(H')\cdot V \ne V$,
we notice that since $H'$ is the stabilizer of a non-zero linear map $ \phi\colon\;V\to
k$, any element $h\in H'$ acts trivially on $V/\ker\phi$. 
It then follows immediately that $I(H') \cdot V \subset
\ker\phi\subsetneq V$.

To see that $I(H)\cdot V=V$, we
choose a basis $e_1,\ldots,e_d$ of $V$
as a vector space over $k$ so that $H$ fixes the first basis vector.
The element $h\in H$ sending $e_1$ to $e_1$ and $e_i$ to $e_i+e_{i-1}$
for $i>1$ satisfies  $(1-h)V=\operatorname{Span}_{k}\{e_1, \ldots , e_{d-1} \}$.
Thus, it now suffices to show that $e_d\in I(H)\cdot V$. When $d\ge 3$ this
follows by what we did already, because of the freedom to choose the basis.
When $d=2$, we assumed
that $q\ne 2$, so there is an element $\lambda\in k$ that is not $0$ or~$1$.
Now the element $h$ of $H$ given by $e_1\mapsto e_1$ and $e_2\mapsto \lambda
e_2$ satisfies $(1-h)V=ke_2$.
\end{proof}

Taking $q=2$ and $d=3$ this gives a non-reduced Gassmann-Sunada triple $(G,H,H')$
with $[G:H]=7$ that is not a jump triple. There are no non-trivial
Gassmann-Sunada triples where $[G:H]$ is smaller.

\begin{exa}[Todd-Komatsu method]\label{exa:GSMethod}
A different method to construct Gassmann-Sunada triples is to start with two
finite groups $H$ and $H'$ of some order $n$, such that for all divisors $d$ of
$n$
the groups $H$ and $H'$ have the same number of elements of order~$d$.  By
numbering the group elements, and considering the regular actions of the groups
on themselves by left-multiplication we obtain embeddings of $H$ and $H'$ into
$S_n$. Under such an embedding each element of order $d$ is sent to a product
of $n/d$ disjoint cycles of length~$d$. Then since two elements in $S_n$ are
conjugate exactly when they have the same cycle decomposition, it follows that
for each $d$ all elements of order $d$ from $H$ or $H'$ are conjugate in~$S_n$.
This shows that the triple $(S_n,H,H')$ is a Gassmann-Sunada triple.  This
triple is non-trivial (i.e., $H$ and $H'$ are non-conjugate subgroups of $S_n$)
if and only if $H$ is not isomorphic to~$H'$.

The Komatsu \cite{Komatsu} triples are the ones obtained in this way when
$H$ and $H'$ are two finite groups of the same order $n$ and
with the same odd prime $p$ as exponent; that is, $x^p =1$ for all $x \in H$ or
$H'$. In this case all non-trivial elements of $H$ and $H'$ are in the same
conjugacy class of $S_n$, so $H$ and $H'$ are order equivalent
and jump equivalent. Thus, as noted by Sormani and Wei \cite[Proposition
10.6]{SW} Komatsu isospectral manifolds have the same covering spectrum.

Another instance of this method  was used by Todd \cite{Todd} in 1949 to give a
counterexample to a conjecture of Littlewood that for any Gassmann-Sunada
triple $(G, H, H')$ the groups $H$ and $H'$ are isomorphic.
Consider the groups $H=C_8\times C_2$ and $H'=C_8\rtimes C_2$, where $C_n$
denotes a cyclic group of order $n$ and the generator of $C_2$ in the
semidirect product $H'$ acts on $C_8$ by raising all elements to the fifth
power. Then $H$ and $H'$ are groups of order $16$ that each have $1$, $3$, $4$
and $8$ elements of order $1$, $2$, $4$ and $8$ respectively, and we obtain
Todd's Gassmann-Sunada triple $(S_{16},H,H')$. Again, $H$ and $H'$ are also
order equivalent. However, Todd's idea can be slightly modified to produce
a Gassmann-Sunada triple that is not a jump triple.
\end{exa}

\begin{prop}\label{Prop:ECSExamples}
Let $N=C_4 \times C_2=\langle a, b: a^4=b^2=1, ab=ba\rangle$, and
define the groups $H=N\times \langle c: c^2=1\rangle$
and $H'=N\rtimes \langle c: c^2=1\rangle$ of order $16$, where in the
semidirect product $c$ acts on $N$ by $a\mapsto a$ and $b \mapsto
a^2b$. Then we have

\begin{enumerate}
\item for every $d\mid 16$ the groups $H$ and $H'$ have the
same number of elements of order $d$;
\item
the Gassmann-Sunada triple $(S_{16},H,H')$ is not a jump triple;
\item
the Gassmann-Sunada triple $(S_{64}, H\times C_4, H'\times C_4)$
is a jump triple, but the subgroups are not order equivalent.
\end{enumerate}
\end{prop}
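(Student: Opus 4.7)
The approach is to tabulate element orders in $H$ and $H'$, exploit the single structural difference between them---the involutions of $H'$ generate all of $H'$, while those of $H$ generate only its $8$-element $2$-torsion---to drive part~(2), and then trace how attaching a $C_4$ factor interacts with this asymmetry in part~(3). For~(1), a direct calculation suffices. In the abelian group $H = N \times \langle c\rangle$, the order of $(x, c^k)$ equals $\mathrm{lcm}(\mathrm{ord}(x), \mathrm{ord}(c^k))$, and since $N = C_4 \times C_2$ has $(1, 3, 4)$ elements of orders $(1, 2, 4)$, this yields $(1, 7, 8)$ for $H$. For $H'$, the elements of the form $a^i b^j$ have the same orders as in $N$, while the semidirect-product identity $(a^i b^j c)^2 = a^{2(i+j)}$ gives exactly four involutions (those with $i + j$ even) and four elements of order $4$ among the eight elements $a^i b^j c$. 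So $H$ and $H'$ have matching order distributions, and the Todd--Komatsu method of Example~\ref{exa:GSMethod} delivers the Gassmann--Sunada assertions in both~(2) and~(3).

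For~(2), the pivotal computation in the semidirect product is $c \cdot (abc) = \sigma(ab) = a \cdot a^2 b = a^3 b$, where $\sigma$ is conjugation by~$c$. Both $c$ and $abc$ are involutions in $H'$, so $a^3 b$---an element of order $4$---lies in the subgroup generated by the involutions of $H'$. With $b$ also an involution, $a^3 = (a^3 b) b$ lies there, hence $a \in \langle a^3\rangle$ does too; as $b$ and $c$ are already involutions, the whole group $H' = \langle a, b, c\rangle$ is generated by its involutions. In the abelian $H$, by contrast, the involutions generate only the $2$-torsion $C_2^3$ of order $8$. Taking $S \subset S_{16}$ to be the conjugacy class of involutions and $T = S_{16}$, one has $|\langle H \cap S\rangle| = 8 \ne 16 = |\langle H \cap T\rangle|$, yet $\langle H' \cap S\rangle = H' = \langle H' \cap T\rangle$; by Lemma~\ref{lemma:jumptriple} this witnesses that $(S_{16}, H, H')$ is not a jump triple.

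For~(3), I would use that in the Cayley embedding of a subgroup $K \le S_n$, $S_n$-conjugacy is determined by order; thus any conjugation-stable $S \subset S_{64}$ meets $K := H \times C_4$ (resp.~$K' := H' \times C_4$) in the elements of some subset $D \subset \{1, 2, 4\}$ of orders, and it suffices to compute $F_K(D) := \langle K \cap S_D\rangle$ and $F_{K'}(D)$ for $D \subset \{2, 4\}$. A direct check gives $F_K(\emptyset) = F_{K'}(\emptyset) = 1$; $|F_K(\{2\})| = 16$ (the $2$-torsion of $K \cong C_4 \times C_2^2 \times C_4$); $|F_{K'}(\{2\})| = 32$ (namely $H' \times C_2$, by~(2)); and $F_K(\{4\}) = K$, $F_{K'}(\{4\}) = K'$, because in $K$ the order-$4$ elements $(a, 0, 0, 0)$ and $(a, b, 0, 0)$ yield the generator $(0, b, 0, 0)$ by subtraction (and similarly for the other factors), while in $K'$ the order-$4$ elements $a, ab, ac$ already generate $H'$ and $(1, \gamma)$ with $\gamma$ a generator of the extra $C_4$ supplies the last factor. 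Comparing the two lattices, the only pair $D_S \subsetneq D_T \subset \{2, 4\}$ with $F_K(D_S) = F_K(D_T)$ is $(\{4\}, \{2, 4\})$, and the same holds for $F_{K'}$; hence $(S_{64}, K, K')$ is a jump triple. Order inequivalence is exhibited at $D = \{2\}$, where $|F_K(\{2\})| = 16 \ne 32 = |F_{K'}(\{2\})|$. The main obstacle is this part-(3) verification: one must confirm that the asymmetry that killed jump equivalence in~(2) does not leak into any other lattice comparison. This ultimately works because the extra $C_4$ ensures that order-$4$ elements generate the full group on both sides, collapsing the earlier ``$\{4\}$ vs $\{2,4\}$'' discrepancy to an equality for both $K$ and $K'$, while preserving the asymmetry at $D = \{2\}$ merely as a difference between subgroup \emph{orders} rather than between equality patterns.
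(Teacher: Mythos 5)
Your proof is correct and follows essentially the same route as the paper's: the same order counts via $(a^ib^jc^k)^2$, the same witness for (2) (involutions generate an index-$2$ subgroup of $H$ but all of $H'$, tested against $S=$ involutions, $T=S_{16}$), and the same three-conjugacy-class analysis in $S_{64}$ for (3), with the order-$4$ elements generating both full groups and the involutions generating subgroups of index $4$ versus $2$. Your part (3) is slightly more explicit than the paper's in enumerating the full lattice of subgroups $F_K(D)$, but the content is identical.
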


\begin{proof}
Note that for the group $H$ the square of $a^ib^jc^k$,
where $i,j,k\in \Z$, is $a^{2i}$,
while for $H'$ the square of $a^ib^jc^k$ is~$a^{2(i+jk)}$.
Thus, in both groups it is true that given any
$j,k\in \{0,1\}$ there are exactly two $i\in \{0,1,2,3\}$
such that $(a^ib^jc^k)^2\ne 1$.
One deduces that $H$ and $H'$ both
have $8$ elements of order $4$ and $7$ elements of order~$2$. Since $H$ and
$H'$ are both of order $16$ this establishes the first statement.

By the method described in \ref{exa:GSMethod}, we obtain a Gassmann-Sunada
triple $(S_{16},H,H')$.  Now it is easy to see that the elements of order 2
generate a subgroup of index
2 in $H$, but in $H'$ they generate the whole group.
Taking $S=\{\si\in S_{16}:\si^2=1\}$ and $T=S_{16}$ we see that
$H$ and $H'$ are not jump equivalent in~$S_{16}$.

For the third statement, note that the two subgroups are contained in three
conjugacy classes $C_1$, $C_2$, $C_4$ of $S_{64}$, where $C_i$ consists of
elements of order~$i$.
We now have $\langle C_4\cap H\rangle =H$ and
$\langle C_4\cap H'\rangle =H'$, while
$[H: \langle C_2\cap H\rangle] =4$ and
$[H': \langle C_2\cap H'\rangle] =2$.
Thus, we see that $H$ and $H'$ are jump equivalent,
and Gassmann-Sunada equivalent in $S_{64}$, but not order equivalent.
\end{proof}

\begin{exa}[Mathieu groups]
Another example of a Gassmann-Sunada triple that is not a jump triple
is the triple $(M_{23}, 2^{4}A_{7}, M_{21}*2)$ mentioned by Guralnick and Wales
\cite[p. 101]{GW}. Here $M_i$ denotes the Mathieu group in degree~$i$. To see
that the jump condition does not hold, one may check, for instance
with Magma \cite{Magma}, that both subgroups $2^{4}A_{7}$ and $M_{21}*2$ are
generated by their elements of order $3$, and that $2^{4}A_{7}$ is generated by
its elements of order $2$ while $M_{21}*2$ is not.
\end{exa}

\begin{exa}[Order equivalence]
We conclude this section with a triple $(G,H,H')$ where $H$ is order equivalent
to $H'$, but not Gassmann-Sunada equivalent.  Consider the action of the
alternating group $A_4$ on a regular tetrahedron.
Let $E$ be the set of its $6$ edges and choose an element $e\in E$.
Then the group $A_4$ acts transitively on $E$, and the stabilizer of $e$ is a
subgroup of order $2$.

Let $V$ be the vector space over the field of 2 elements with $E$
as a basis. Then $A_4$ acts on $V$ by permuting coordinates.
We now let $W$ be the $5$-dimensional subspace of $V$ where the sum of the
coordinates is zero, and we put $G=W\rtimes A_4$, which is a group of
order~$384$.  We let $H$ be the 4-dimensional subspace of $W$ consisting of
the vectors with coordinate $0$ at~$e$.  To define $H'$, choose a subset $E'$
of $E$ consisting of $3$ edges in such a way that $E'$ contains two
non-adjacent edges, and let $H'$ be the 4-dimensional subspace of $W$
consisting of the
vectors whose coordinate sum over $E'$ is zero. Then one can check that
$H$ and $H'$ are order equivalent in $G$, but not Gassmann-Sunada equivalent.
\end{exa}

%\hyphenation{Gass-mann}

\begin{exa}[Additional jump equivalent triples that are not Gassmann-Sunada equivalent]
In Remark~\ref{rem:PresIndex} we gave an example of a jump triple that is not a Gassmann-Sunada triple. 
We now demonstrate that such examples can be constructed in a rather routine fashion. Indeed, 
let $V$ be a finite dimensional vector space over a finite field $k$, and let 
$G = V \rtimes \Gl(V)$
denote the affine transformation group of $V$. We will agree to call 
an element $v \in V \subset G$ a \emph{translation} and any subspace $W \subset V$ a \emph{translation subgroup}. 
Since $\Gl(V)$ acts transitively on the non-trival vectors of $V$ it follows that the conjugacy 
class of an element $v \in V-\{0\}$ consists entirely of all the non-trivial translations of $V$. 
Therefore, any two non-trivial translation subgroups $V_1, V_2 \leq G$ are 
Kronecker equivalent in $G$ (cf. \cite[Thm. 2.10(i)]{LMNR}).

Now, suppose $S \subset G$ is stable under conjugation. Then, we have $\langle V_i \cap S \rangle$ is trivial or all of  $V_i$ (for $i = 1,2$) depending on whether $S$ contains a non-trivial translation or not, and we conclude that $(G, V_1, V_2)$ is a jump triple. If we take $V_1$ and $V_2$ to be of different dimensions (and hence of different orders), then $(G, V_1, V_2)$ is a jump triple that is not Gassmann-Sunada.
\end{exa}

\bigskip

%%%%%%%%%%%%%%%%%%%%%%%%%%%%%%%%%%%%%%%%%%%
%%%%%%    Isospectral manifolds with distinct covering spectra   %%%%%%%%%%
%%%%%%%%%%%%%%%%%%%%%%%%%%%%%%%%%%%%%%%%%%%

\section{Covering spectra of isospectral manifolds}\label{Sec:Isospectral}

In this last section, we turn to isospectral manifolds and their covering
spectra. We first describe how to use the results of the previous two
sections in order to prove Theorem \ref{thm:distinct}.
We then show that the isospectral lattices of Conway and Sloane can also
give rise to isospectral flat tori of dimension 4 with distinct
covering spectra. At the end of the section we 
study the covering spectra of isospectral Heisenberg manifolds 
and refute the claim made in \cite[Example 10.3]{SW}.

\begin{proof}[Proof of Theorem \ref{thm:distinct}]
Given $n\ge 3$ we will produce isospectral Riemannian manifolds of dimension
$n$ with distinct covering spectra. In the previous section we saw that there
exists a Gassmann-Sunada triple $(G,H,H')$ such that $H$ and $H'$ are not
jump-equivalent in~$G$. Suppose that $G$ can be generated by $k$ generators:
$g_1, \ldots , g_k$. For instance, $k=2$ for the triple in part $(2)$ of
Proposition
\ref{Prop:ECSExamples}. 
We will now construct a closed manifold $M$ of dimension $n$ on which $G$ acts
freely as in \cite[Sec. 11.4]{Buser}.

Let $N_0$ be any compact orientable surface of genus at least $k$. 
Then $\pi_1(N_0)$ has a quotient group that is free on $k$ generators.
One can see this with an explicit description of $\pi_1(N_0)$
as in \cite[3.8.12]{Spanier}, or by looking through the 
$k$ holes and projecting the surface to a closed disc in the plane
with $k$ points missing.
Since $G$ is a quotient of the free group on $k$ generators,
there is a surjective homomorphism $\phi:
\pi_1(N_0) \to G$.
(One can also obtain an explicit homomorphism of $\pi_1(N_0)$ onto $G$ as in
\cite[Sec. 11.4]{Buser}.) Now let $N$ be the regular covering of $N_0$
corresponding to the normal subgroup
$\ker(\phi)$ of $\pi_1(N_0)$.  Then the group of deck transformations
of $N$ over $N_0$ is isomorphic to $G$; see \cite[Thm.~2.3.12, 2.5.13, Cor.~2.6.3]{Spanier}. Letting $M = N \times Y$ be the product of $N$
with any closed manifold $Y$ of dimension $n-2$, then we see that $M$ is a
closed $n$-manifold on which $G$ acts freely. Then, since $(G, H, H')$ is not a
jump triple, Theorem~\ref{thm:CovSunada} tells us that $M$ admits a
$G$-invariant metric such that the quotient manifolds $H \backslash M$ and $H'
\backslash M$ have distinct covering spectra, while Sunada's result says they
have the same Laplace spectra.
\end{proof}

\begin{exa}[Isospectral flat tori with distinct covering spectra]\label{exa:tori}
An alternative way to obtain examples of isospectral manifolds with distinct
covering spectra of dimension $4$ is by using the construction of rank 4
isospectral lattices by Conway and Sloane  \cite{CS}.
It was pointed out by Gordon and Kappeler \cite[Sec. 4.4]{GK} that these
lattices arise from a Gassmann-Sunada triple. We briefly explain the
construction.
%==============

Let $A$ be the additive group of the group ring $\Z[V_4]$ where
$V_4=\{1,\sigma,\tau,\rho\}$ is the Klein group of order $4$ with unit
element~$1$.  Then let $H=\langle 3A, \sigma+\tau+\rho, 1+\rho-\tau\rangle$
and $H'=\langle 3A, 1+\sigma+\tau, 1+\rho-\tau\rangle$, which are both subgroups of $A$. We view $H$ and $H'$ as subgroups of the semidirect product $G = A\rtimes V_4$.

Note that the group ring $\R[V_4]$ over the field of real numbers has a basis
as a vector space over $\R$ consisting of the primitive idempotents
$e_1=(1+\sigma+\tau+\rho)/4$, $e_2= (1+\sigma-\tau-\rho)/4$,
$e_3=(1-\sigma+\tau-\rho)/4$, $e_4=(1-\sigma-\tau+\rho)/4$.
For each element $g$ of $V_4$ the map
$\R[V_4]\to \R[V_4]$ sending $x$ to $gx$ is given by a diagonal matrix.
We now take any Euclidean structure
on $\R[V_4]$ so that these eigenspaces are orthogonal. 
Then $G$ acts by isometries on~$\R[V_4]$.

A note on notation: our lattice $H$ is denoted $\mL$ in \cite{GK} and $\mL^+$
in \cite{CS}, while $H'$ is $\mL'$ in \cite{GK}, and reflecting across the
hyperplane spanned by $e_1, e_2$ and $e_3$ gives $\mL^-$ in \cite{CS}.

Now consider the flat torus $\R[V_4]/3A$ as a Riemannian manifold. The group
$G/3A$ acts on $\R[V_4]/3A$, and its quotients by the subgroups  $H/3A$ and $H'/3A$ are the flat tori
$T=\R[V_4]/H$ and $T'=\R[V_4]/H'$. It is not hard to see (cf. \cite[Sec. 4.4]{GK})
that $(G/3A, H/3A,H'/3A)$ is a Gassmann-Sunada triple.  Note that $G/3A$ does
not act freely on $\R[V_4]/3A$, but $H/3A$ and $H'/3A$ do.  In this context, 
Sunada's result is still valid \cite{Berard1, Berard2}, so $T$ and $T'$ are
isospectral.

For a judicious choice of our Euclidean structure on $\R[V_4]$ we now obtain
distinct covering spectra.  Specifically, the following table gives three
examples of Euclidean structures and the covering spectra they give rise to.

\medskip
\begin{center}
\begin{tabular}{c  c  c}
$3\langle e_1,e_1\rangle,\ldots, 3\langle e_4,e_4\rangle $&
$\CovSpec(T) $&
$\CovSpec(T')$\\
\hline
\hline
$1,4,10,13$ &
$\{\sqrt{3}, \sqrt{5},
\sqrt{6}, \sqrt{7} \}$&
$\{\sqrt{3}, \sqrt{5},
\sqrt{7} \}$
\\

$2, 8, 14, 20$ &
$\{\sqrt{5}, 3,
\sqrt{10}, \sqrt{13} \}$&
$\{ \sqrt{5}, 3,
\sqrt{12}, \sqrt{13} \}$
\\

$1,7,13,19$ &
$\{2, \sqrt{8}, \sqrt{10}\}$&
$\{2, \sqrt{8}, \sqrt{10}\}$
\\
\end{tabular}
\end{center}
\medskip\noindent
In the last row the covering spectra are the same, but the multiplicities
in the sense of Example \ref{Exa:FlatTori} or \cite[Definition 6.1]{SW}
are not: they are $1,2,1$ (respectively) for $T$ and $1,1,2$
(respectively) for~$T'$. This table was obtained using
Magma \cite{Magma} and can be verified (tediously) by hand.
\end{exa}

\begin{rem}\label{rem:tori}
In fact, $H/3A$ and $H'/3A'$ are jump equivalent in $G/3A$, but since $\R[V_4]/3A$
is not simply connected, this does not contradict Theorem \ref{thm:CovSunada}.
Furthermore, $H/3A$ and $H'/3A'$ are order equivalent in $G/3A$.
However, one can check that $H/9A$ and $H'/9A$ are \emph{not} jump equivalent
in $G/9A$.
\end{rem}

\begin{exa}[Heisenberg Manifolds]\label{Exa:Heis}
Let $(V, \inner)$ be a real inner product space of dimension $2n$
%$2n$-dimensional real inner product space that is
endowed with a non-degenerate symplectic form $\omega$; that is,
$\omega\colon\; V\times V \to \R$ is a non-degenerate skew-symmetric form.
The Heisenberg group $H(V)$ associated to $V$ is the manifold
$H= V\times \R$, together with the group operation
$$(x,t)(x',t') = (x + x', t + t' + \omega(x,x')/2).$$
With this operation $H$ becomes a non-commutative real Lie group.
The identity element $e\in H$ is the pair $(0,0)$.
The center $Z=\{0\}\times \R$ of $H$ is equal to the commutator
subgroup $[H,H]$ of $H$, so $H$ is $2$-step nilpotent, and the
commutator pairing $H\times H\to [H,H]$ is given by the
symplectic form: $$(x,t)(x',t')(x,t)^{-1}(x',t')^{-1}=(0,\omega(x,x')).$$

The tangent space of $H$ at $e$ is $V\times \R$, which we view
as an inner product space by putting the standard inner product on $\R,$
and taking the orthogonal direct sum with $(V,\inner)$.  We extend this inner
product uniquely to a left-invariant Riemannian metric $g$ on $H$ . Note that
the
group isomorphism $V\to H/Z$ is an isometry between the Euclidean
space $V$ with its flat metric and the quotient manifold $H/Z$ together
with its induced structure of a Riemannian manifold.

Now suppose we have a lattice $\mL$ of full rank in $V$, and a positive real
number $c$ such that $\omega(\mL,\mL)\subset c\Z$. Then $\Gamma=\mL\times c\Z$
is a discrete
subgroup of~$H$ that acts freely and by isometries on $H$ by left
multiplication. The quotient space $X=\Gamma\backslash H$ is then a Riemannian
manifold, called a \emph{Riemannian Heisenberg manifold}.

Note that $Z/(\{0\}\times c\Z)$ acts freely by isometries on $X$, and that
the quotient $X/Z$, considered as a Riemannian manifold, is just the flat torus
$T=V/\mL$.  The covering spectrum of $T$ is the jump set of the length map
$m_\mL$ on~$\mL$ as
in Example \ref{Exa:FlatTori}.

We will now show how to obtain the covering spectrum of $X$ from that of~$T$.
The covering spectrum of $X$ is the jump set of the length map
$m_\Gamma(\gamma)=\inf_{h\in H} d_H(h,\gamma h)$ on $\Gamma$,
where $d_H$ is the geodesic distance on~$H$ (see Example~\ref{exa:MinMark}).

\begin{prop}\label{prop:HeisCovSpec}
Let $\delta_T=\inf \CovSpec(T)$, and put $\delta_Z=m_\Gamma((0,c))$.
Then we have
$$
\CovSpec(X)=
\begin{cases}
\{\delta_Z\}\cup \CovSpec(T) &
\text{if $\delta_Z < \delta_T$;}\\
\CovSpec(T) &
\text{otherwise.}
\end{cases}
$$
\end{prop}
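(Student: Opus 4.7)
The plan is to invoke Proposition~\ref{prop:CovJump} to realize both $\CovSpec(X)$ and $\CovSpec(T)$ as halves of the jump sets of filtrations on $\Gamma$ and $\mL$ coming from their minimum marked length maps (recalling from Example~\ref{Exa:FlatTori} that $m_\mL(l)=\|l\|$), and then to compare these two filtrations through the homomorphism $\pi_*\colon\Gamma\twoheadrightarrow\mL$ induced by the Riemannian submersion $H\to V=H/Z$, whose kernel is $\Gamma\cap Z=\{0\}\times c\Z$.

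The first step is to compute $m_\Gamma$ explicitly. Because $T_eH=V\oplus\R$ is an orthogonal decomposition and the metric on $H$ is left-invariant, the projection $H\to V$ is a Riemannian submersion, so $m_\Gamma((l,kc))\ge\|l\|$ for every $(l,kc)\in\Gamma$. To obtain the matching upper bound when $l\ne 0$, I would note that the horizontal lift of the straight path $\tau\mapsto\tau l$ in $V$ starting at $e$ has vertical derivative $\omega(\tau l,l)/2=0$ by skew-symmetry of $\omega$, hence terminates at $(l,0)\in H$ and realizes $d_H(e,(l,0))=\|l\|$. Combined with the fact that $\{l\}\times\R$ is the full $H$-conjugacy class of $(l,0)$, this forces $m_\Gamma((l,kc))=\|l\|$ for every $k\in\Z$. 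For central elements, left translation commutes with conjugation by $Z$, so $m_\Gamma((0,kc))=d_H(e,(0,kc))$; in particular, $m_\Gamma((0,c))=\delta_Z$.

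Once $m_\Gamma$ is known, the rest is an algebraic analysis of $\Fil^\bullet_{m_\Gamma}\Gamma$. First, $\pi_*(\Fil^\delta_{m_\Gamma}\Gamma)=\Fil^\delta_{m_\mL}\mL$ for every $\delta>0$: the inclusion $\subseteq$ follows from $m_\Gamma\ge m_\mL\circ\pi_*$, and $\supseteq$ is witnessed by the lifts $(l,0)$ of length $\|l\|$. Next, the key observation for the central kernel is that $(0,c)$ enters $\Fil^\delta_{m_\Gamma}\Gamma$ in two independent ways: directly once $\delta>\delta_Z$, and indirectly via the identity $(l,0)(-l,c)=(0,c)$ (using $\omega(l,-l)=0$) as soon as some $l\in\mL-\{0\}$ satisfies $\|l\|<\delta$. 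Hence $\Gamma\cap Z\subset\Fil^\delta_{m_\Gamma}\Gamma$ precisely when $\delta$ exceeds the smaller of $\delta_Z$ and the shortest non-trivial lattice length, and from that point onward the filtration equals $\pi_*^{-1}(\Fil^\delta_{m_\mL}\mL)$. Reading off the jump set produces the dichotomy: an extra jump contributed by $(0,c)$ appears exactly when the central element enters before any non-central one, i.e., when $\delta_Z<\delta_T$; otherwise its entry coincides with (or follows) a torus jump and contributes nothing new.

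The main obstacle I anticipate is the geometric step of the second paragraph, namely proving $m_\Gamma((l,kc))=\|l\|$ for $l\ne 0$. The submersion bound provides only one inequality, and the matching upper bound requires exhibiting an actual length-$\|l\|$ geodesic from $e$ to the $H$-conjugacy class $\{l\}\times\R$; the horizontal lift of the straight line in $V$ supplies this, and the skew-symmetry of $\omega$ is what makes the vertical displacement vanish so that the lift ends exactly at $(l,0)$. Once this identity is secured, the projection step and the product-identity observation (in which $\omega(l,-l)=0$ appears once more) together with the formal algebra of filtrations deliver the stated dichotomy.
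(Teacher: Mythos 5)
Your overall strategy is the same as the paper's: compute the minimum marked length map on $\Gamma$, then compare the filtration $\Fil^\bullet_{m_\Gamma}\Gamma$ with $\Fil^\bullet_{m_\mL}\mL$ through the quotient by $\Gamma\cap Z$. Your horizontal-lift argument giving $m_\Gamma((l,kc))=\|l\|$ for $l\ne 0$ is correct (the submersion bound plus the fact that the $H$-conjugacy class of $(l,kc)$ is $\{l\}\times\R$), and it is in effect a self-contained proof of the paper's fact $(1)$, which the paper only cites to Kaplan, Gordon and Eberlein. The identity $(l,0)(-l,c)=(0,c)$ is likewise exactly the paper's mechanism for absorbing the center once a short lattice vector is available.

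There is, however, a genuine gap at the central elements $(0,kc)$ with $|k|\ge 2$. Your computation of $m_\Gamma$ covers non-central elements and the single element $(0,c)$, for which $m_\Gamma((0,c))=\delta_Z$ is just the definition. But your key step --- that $\Gamma\cap Z\subset\Fil^\delta_{m_\Gamma}\Gamma$ \emph{precisely} when $\delta$ exceeds the smaller of $\delta_Z$ and the shortest non-trivial lattice length --- requires the lower bound $m_\Gamma((0,kc))\ge\delta_Z$ for all $k\ne 0$. Without it, some $(0,kc)$ with $|k|\ge 2$ could a priori satisfy $m_\Gamma((0,kc))<\min(\delta_Z,\min_{l\in\mL-\{0\}}\|l\|)$ and generate a non-trivial subgroup of the filtration at a smaller parameter, producing an extra jump belonging to neither $\{\delta_Z\}$ nor $\CovSpec(T)$. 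The length-map axioms give only the upper bound $m_\Gamma((0,kc))\le |k|\,\delta_Z$, and the submersion bound is vacuous on the center, so the needed monotonicity of $k\mapsto d_H(e,(0,kc))$ does not follow from anything you have set up; it is exactly the paper's fact $(2)$, which rests on the explicit description of geodesics in Riemannian Heisenberg manifolds (Kaplan, Gordon, Eberlein). Once that input is added, the rest of your filtration analysis goes through.
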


\begin{proof}

It is well-known for Riemannian Heisenberg manifolds that
\begin{enumerate}
\item $m_\Gamma(v,kc) = m_{\mL}(v)$ for all non-zero $v\in \mL$,
and
\item among all free homotopy classes represented by central elements
$\{0\}\times c\Z$ of $\Gamma$, the one that can be represented by a closed
geodesic of smallest length is
$(0,c).$
\end{enumerate}
For details of (1) and (2) and/or a precise calculation of the geodesics and
$\delta_Z$ we refer the reader to \cite{Kaplan}, \cite{Gordon}, or
\cite{Eberlein}.

To deduce the proposition, suppose first that $\delta_Z<\delta_T$.
Then by property (2) the element $(0,c)$ has the smallest non-zero length in~$\Gamma$.  The
intermediate groups of $\langle (0,c)\rangle \subset \Gamma$ now correspond
exactly to
the subgroups of $\mL$, and by property (1) the filtrations induced by
$m_{\Gamma}$ and $m_{\mL}$ (respectively) correspond as well, so this gives us
the first case.

For the second case, suppose that $v\in \mL$ with $v\ne 0$ and
$\delta_T(v) \le \delta_Z$. Then we have $(0,kc)=(v,kc)(v,0)^{-1}$,
and by property (1) both $(v,0)$ and $(v,kc)$ have length~$\delta_T(v)$.
Thus, $\langle (0,c), (v,0) \rangle$ is contained in the
smallest non-trivial group in the filtration of $\Gamma$,
and the second case follows.
\end{proof}

\begin{rem}\label{rem:HeisEquiv}
By \cite[Prop 2.16]{Gordon3} any quotient of a $2n+1$-dimensional
simply connected Heisenberg Lie group endowed with a left-invariant metric by
a cocompact, discrete subgroup is isometric to a Riemannian Heisenberg manifold
as described above, i.e., to one of the form $\Gamma\backslash H(V)$,
where $(V,\omega,\inner)$ and $\Gamma=\mL \times c\Z$ are as above.

Moreover, if two such Riemannian Heisenberg manifolds are isospectral,
then they are isometric to manifolds
$X=(\mL\times c\Z)\backslash H(V)$ and $X'=(\mL'\times c\Z) \backslash H(V)$
given with the same $V$, $\langle \cdot,\cdot\rangle$, $\omega$, $c$, 
and two cocompact lattices $\mL$ and $\mL'$ in $V$ such that
the flat tori $T=V/\mL$ and $T'=V/\mL'$ are isospectral. 
In fact, two Heisenberg manifolds $X$ and $X'$ as above
are isospectral if and only if $T=V/\mL$ and $T'=V/\mL'$ are isospectral.
To see these comments, the reader may consult \cite[Prop. 2.14, 2.16]{Gordon3} and
\cite[Prop. III.5]{Pesce2}.
\end{rem}

Combining the remark with the proposition we have the following.

\begin{cor}\label{cor:HeisCovSpec}
Suppose that $X$ and $X'$ are isospectral Heisenberg manifolds as 
in Remark \ref{rem:HeisEquiv}.
Then $X$ and $X'$ have the same covering spectra if and only if $T$ and $T'$
have the same covering spectra.
\end{cor}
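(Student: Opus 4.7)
The plan is to reduce the corollary to a direct comparison of the two instances of Proposition~\ref{prop:HeisCovSpec}, one for $X$ and one for $X'$, after placing both manifolds in a common ambient Heisenberg group. By Remark~\ref{rem:HeisEquiv} I may assume $X=(\mL\times c\Z)\backslash H(V)$ and $X'=(\mL'\times c\Z)\backslash H(V)$ share the same $V$, $\omega$, $\inner$, and $c$, and that the induced flat tori $T=V/\mL$ and $T'=V/\mL'$ are isospectral.

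The first key step is the observation that the quantity $\delta_Z$ appearing in Proposition~\ref{prop:HeisCovSpec} depends only on $H(V)$ with its left-invariant metric, not on the lattice. Indeed, $(0,c)$ lies in the center of $H(V)$, so left-invariance of the metric gives
\[
d_{H(V)}\bigl(h,(0,c)h\bigr)=d_{H(V)}\bigl(e,h^{-1}(0,c)h\bigr)=d_{H(V)}\bigl(e,(0,c)\bigr)
\]
for every $h\in H(V)$, whence $\delta_Z=m_\Gamma((0,c))=d_{H(V)}(e,(0,c))$ is determined by the shared data $V,\omega,\inner,c$ alone. In particular $\delta_Z=\delta_{Z'}$. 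The second key step, needed for the converse direction, is that $\delta_T=\delta_{T'}$; this follows from the classical Poisson summation fact that isospectral flat tori share the same length spectrum, and hence the same shortest closed geodesic.

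With both equalities in place, $X$ and $X'$ necessarily fall into the same one of the two cases of Proposition~\ref{prop:HeisCovSpec}, and the corollary follows by a case analysis. When $\delta_Z<\delta_T=\delta_{T'}$, the proposition gives $\CovSpec(X)=\{\delta_Z\}\cup\CovSpec(T)$ and $\CovSpec(X')=\{\delta_Z\}\cup\CovSpec(T')$, where $\delta_Z$ is strictly smaller than every element of either torus's covering spectrum; the forward implication is immediate, and the converse comes from removing the common minimum element $\delta_Z$ from both sides of $\CovSpec(X)=\CovSpec(X')$. In the opposite case, $\CovSpec(X)=\CovSpec(T)$ and $\CovSpec(X')=\CovSpec(T')$, and the equivalence is trivial.

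The main potential obstacle is the converse direction. Without knowing $\delta_T=\delta_{T'}$, one could a priori imagine $X$ falling into the case $\delta_Z<\delta_T$ while $X'$ falls into the opposite case, which would make $\CovSpec(X)=\CovSpec(X')$ consistent with $\CovSpec(T)\ne\CovSpec(T')$. This scenario is precisely what the Poisson summation input rules out, and it is the only point in the argument where the isospectrality of $T$ and $T'$ is used beyond what is already invoked in Remark~\ref{rem:HeisEquiv}.
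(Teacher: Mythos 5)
Your proof is correct and follows essentially the same route as the paper's: establish that $\delta_Z=\delta_{Z'}$ (both equal $d_{H(V)}(e,(0,c))$, which you justify directly via centrality and left-invariance where the paper cites statement (2) in the proof of Proposition~\ref{prop:HeisCovSpec}), deduce $\delta_T=\delta_{T'}$ from the equality of length spectra of isospectral flat tori, and then apply Proposition~\ref{prop:HeisCovSpec}. Your explicit case analysis, including the observation that $\delta_Z$ can be stripped from both sides when it lies strictly below $\delta_T=\delta_{T'}$, simply spells out the final step the paper leaves implicit.
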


\begin{proof}
By statement (2) in the proof of Proposition \ref{prop:HeisCovSpec} we see
that $\delta_Z=\delta_Z',$ since both are equal to the distance in $H(V)$ from
$(0,0)$ to $(0,c)$.  As the flat tori $T=V/\mL$ and $T'=V/\mL'$ are
isospectral, they have the same length spectrum, so there is a one to one
correspondence between the lengths of the vectors in $\mL$ and the lengths of
the vectors in $\mL'$. In particular, we must have
$\delta_T=\delta_{T'}$. The result now follows from
Proposition \ref{prop:HeisCovSpec}.
\end{proof}

\begin{exa}[Isospectral Heisenberg manifolds with different covering spectra]
Let $\mL, \mL'\subset \R[V_4]$ be a pair of lattices of Conway and Sloane
as in Example~\ref{exa:tori}, that give rise to 
isospectral 4-dimensional flat tori with distinct
covering spectra. 
One easily checks that $\Gamma=\mL\times\Z$ and $\Gamma'=\mL'\times\Z$ are
discrete subgroups of $H(V),$ where $V=\R[V_4]=\R^{4}$ with the standard symplectic
form. By Remark~\ref{rem:HeisEquiv}, $X=\Gamma\backslash H(V)$ and
$X'=\Gamma'\backslash H(V)$ are isospectral, since $T=V/\mL$ and $T'=V/\mL'$
are isospectral. By Proposition~\ref{prop:HeisCovSpec}, they have distinct
covering spectra, as $T$ and $T'$ have distinct covering spectra.
\end{exa}

\begin{rem}
It follows from Corollary~\ref{cor:HeisCovSpec} that the example of isospectral
Heisenberg manifolds with different covering spectra given by Sormani and Wei
(see \cite[Example 10.3]{SW}, \cite{SWerrata}) is incorrect, as their tori
$T$ and $T'$ have the same covering spectrum.
\end{rem}

\begin{rem} In \cite[Definition 6.1]{SW}, Sormani and Wei assign a {\it basis
multiplicity} to each element $\delta$ of the covering spectrum of a 
compact Riemannian manifold $M$. It is the minimal number of generators of
length $2 \delta$ that one needs to add to $\Fil^\delta\fund$ to generate the
next bigger group in the filtration.
One can show
that if $X$ is a Riemannian Heisenberg manifold with $\delta_Z \geq \delta_T$,
then the basis multiplicity of $\delta_T$ in $\CovSpec(X)$ is either equal to
the basis multiplicity of $\delta_T$ in $\CovSpec(T)$ or equal to one more than
the basis multiplicity of $\delta_T$ in $\CovSpec(T).$  All other basis
multiplicities of $\CovSpec(X)$ are unaffected; i.e., their basis multiplicity
in $\CovSpec(X)$ is equal to their basis multiplicity in $\CovSpec(T).$ The
isospectral Heisenberg manifolds of \cite[Example 2.4a]{Gordon}
have the same covering spectra by Corollary~\ref{cor:HeisCovSpec}, but
$\delta_Z$ has a different basis multiplicity in the two examples.  
\end{rem}

\end{exa}

%\newpage

%%%%%%%%%%%%%%%%%%%%%%%%%%%%%%%%%%%%%%%%%%%%%%
%%%%%%%%%%                   BIBLIOGRAPHY        %%%%%%%%%%%%%%%%%%%%
%%%%%%%%%%%%%%%%%%%%%%%%%%%%%%%%%%%%%%%%%%%%%%

%\bibliographystyle{amsplain}
\bibliographystyle{amsalpha}

%\newpage

\end{document}